\numberwithin{equation}{section}
\theoremstyle{plain}
\newtheorem{lemma}{Lemma}
\newtheorem{theorem}{Theorem}
\newtheorem{corollary}{Corollary}
\newtheorem{proposition}{Proposition}
\theoremstyle{remark}
\newtheorem{remark}{Remark}
\newcommand{\C}{\mathbb{C}} 
\newcommand{\R}{\mathbb{R}}
\renewcommand{\hat}[1]{\widehat{#1}}
\newcommand{\D}{\mathcal{D}}
\newcommand{\N}{\mathbb{N}}
\renewcommand{\P}{\mathbb{P}}
\newcommand{\E}{\mathbb{E}}
\newcommand{\ve}{\varepsilon}
\newcommand{\ttop}{^{\top}}
\newcommand{\ts}{\textstyle}
\newcommand{\var}{\operatorname{var}}
\newcommand{\supp}{\operatorname{supp}}
\newcommand{\xin}[1]{\xi_{n,#1}}
\newcommand{\lb}{\left(}
\newcommand{\rb}{\right)}
\newcommand{\bfSigmahat}{	\hat{ \mathbf{\Sigma} }}
\newcommand{\bfSigmahatu}{	\hat{ \underline{\mathbf{\Sigma} }}}
\newcommand{\bfSigma}{\mathbf{\Sigma}}
\newcommand{\bfI}{\mathbf{I}}
\newcommand{\MP}{Mar\v cenko--Pastur }
\newcommand{\Fu}{\underline{F}}
\newcommand{\cond}{\stackrel{\mathcal{D}}{\to}}
\newcommand{\inv}{^{-1}}
\newcommand{\im}{\operatorname{Im}}
\newcommand{\re}{\operatorname{Re}}
\newcommand{\PR}{\mathbb{P}}
\newcommand{\op}{o_{\PR}(1)}
\newcommand{\bfA}{\mathbf{A}}
\newcommand{\su}{\underline{s}}
\newcommand{\lambdahat}{\lambda_1(\hat\bfSigma)}
\begin{document}


  \title{ Tracy-Widom, Gaussian, and Bootstrap: Approximations for Leading Eigenvalues in High-Dimensional PCA}
  \author{Nina Dörnemann \thanks{
    ND gratefully acknowledges the support by the DFG Research unit 5381 {\it Mathematical Statistics in the Information Age}, project number 460867398, and the Aarhus University Research Foundation (AUFF), project numbers 47221 and 47388.}\hspace{.2cm}\\
    Department of Mathematics \\ Aarhus University
    \and 
    Miles E. Lopes \\
    Department of Statistics \\  University of California, Davis}
  \maketitle

\medskip
\begin{abstract}
Under certain conditions, the largest eigenvalue of a sample covariance matrix undergoes a well-known phase transition when the sample size $n$ and data dimension $p$ diverge proportionally. 
In the subcritical regime, this eigenvalue has fluctuations of order $n^{-2/3}$ that can be approximated by a Tracy-Widom distribution, while in the supercritical regime, it has fluctuations of order $n^{-1/2}$ that can be approximated with a Gaussian distribution. However, the statistical problem of determining which regime underlies a given dataset is far from resolved. We develop a new testing framework and procedure to address this problem. In particular, we demonstrate that the procedure has an asymptotically controlled level, and that it is power consistent for certain alternatives. Also, this testing procedure enables the design a new bootstrap method for approximating the distributions of functionals of the leading sample eigenvalues within the subcritical regime---which is the first such method that is supported by theoretical guarantees.
\end{abstract}

\noindent%
{\it Keywords:}  High-dimensional statistics, 
hypothesis testing,
bootstrap,
covariance matrices,
phase transition
\vfill

\newpage
\section{Introduction}
The eigenvalues of sample covariance matrices play a key role in many applications of high-dimensional inference that are related to principal component analysis (PCA), such as in signal processing, computer vision, finance, meteorology and others \citep{feng2000human, fabozzi2007robust, ruppert2011statistics,lorenz1956empirical}. 
Accordingly, an extensive literature has developed around the asymptotic analysis of the distributions of these eigenvalues, denoted $\lambda_1(\hat\bfSigma)\geq\cdots\geq\lambda_p(\hat\bfSigma)$, where $\hat\bfSigma$ is a $p\times p$ sample covariance matrix constructed from $n$ observations \cite{bai2004, yao2015, tao2012topics}.

Within this literature, one of the most prominent lines of research has dealt with a fundamental phenomenon known as the Baik-Ben Arous-P\'ech\'e (BBP) phase transition~\cite{bbp}. Under certain conditions, when $n$ and $p$ diverge proportionally, the BBP phase transition refers to the fact that $\lambda_1(\hat\bfSigma)$ exhibits two types of behavior, depending on whether the largest eigenvalue $\lambda_1(\bfSigma)$ of the population covariance matrix $\bfSigma$ resides in the ``subcritical'' or ``supercritical'' regime. (A precise formulation of these regimes will be given in~\eqref{hypothesis0} below.) In brief, the phase transition entails that
 if $\lambda_1(\bfSigma)$ is subcritical, then $\lambda_1(\hat\bfSigma)$ has fluctuations of order $n^{-2/3}$ that can be approximated with a Tracy-Widom distribution \cite{johnstone2001, bao2015, leeschnelli2016, knowles2017anisotropic}, whereas if $\lambda_1(\bfSigma)$ is supercritical, then $\lambda_1(\hat\bfSigma)$ has fluctuations of order $n^{-1/2}$ that can be approximated with a Gaussian distribution \cite{paul2007asymptotics, bai2008central, cai_han_pan,  zhang2022asymptotic}. 
Due to this phase transition, it is crucial for practitioners to determine which regime underlies a given dataset, so that the correct type of distributional approximation can be used in inference procedures related to $\lambda_1(\hat\bfSigma)$.
Yet, the statistical problem of using data to answer this question is far from resolved.

In this paper, we propose a solution that is framed in terms of a hypothesis testing problem that can be informally stated as
\begin{align} 
 \label{testing_problem_heu}
    \mathsf{H}_0: \lambda_1(\bfSigma) \textnormal{ is subcritical \quad vs. \quad } 
     \mathsf{H}_1: \lambda_1(\bfSigma) \textnormal{ is supercritical}. 
\end{align}
To the best of our knowledge, 
the problem~\eqref{testing_problem_heu}  has not been systematically addressed in the literature.  But, at first sight, this problem might be conflated with one that has been extensively studied---the problem of detecting ``spike'' population eigenvalues  \cite{ding2022tracy, onatski_et_al_2014, johnstone_onatski_2020}
---and so it is important to clarify why the two problems are essentially different. For this purpose, it is helpful to consider the simplest scenario of a spike detection problem, which deals a population covariance matrix of the form $\bfSigma=\textup{diag}(\lambda_1(\bfSigma),1,\dots,1)$, where the largest population eigenvalue is said to be a spike if $\lambda_1(\bfSigma)>1$. In this context, the goal is to test the null $\mathsf{H}_0':\lambda_1(\bfSigma)=1$ against the alternative $\mathsf{H}_1':\lambda_1(\bfSigma)>1$~\citep{Johnstone:2018}. When $\bfSigma$ has this particular structure, $\lambda_1(\bfSigma)$ is regarded as subcritical (resp.~supercritical) if it is slightly below (resp.~above) the threshold $1+\sqrt{p/n}$ \cite{BAI2012167, Johnstone:2018}.
The key distinction with respect to the problem~\eqref{testing_problem_heu} is that it is possible for $\lambda_1(\bfSigma)$ to be either subcritical or supercritical under $\mathsf{H}_1'$. \emph{Consequently, even when $\mathsf{H}_1'$ is correctly detected, this does not reveal the type of fluctuations that  $\lambda_1(\hat\bfSigma)$ has.} Furthermore, this issue  persists in more general forms of spiked covariance models.

\subsection{Contributions} In our effort to determine the regime that $\lambda_1(\bfSigma)$ occupies, 
we offer four main contributions:
\begin{enumerate}
    \item We provide a new perspective, by developing a novel formulation of the hypotheses in~\eqref{testing_problem_heu}. Also, this formulation is applicable to a broad class of population covariance matrices $\bfSigma$, and is not limited to stylized spiked models that are often used in the literature. 
    \item We propose a test statistic of the form 
    \begin{equation}
        T_n=\ts\frac{n^{2/3}}{\hat\sigma_n}(\lambda_1(\hat\bfSigma)-\lambda_2(\hat\bfSigma))
    \end{equation}
     which involves a new estimate $\hat\sigma_n$ for a certain scale parameter $\sigma_n$ of the leading sample eigengap $\lambda_1(\hat\bfSigma)-\lambda_2(\hat\bfSigma)$. (See~\eqref{def_sigma} and~\eqref{def_sigma_hat} for definitions of $\sigma_n$ and $\hat\sigma_n$ respectively.) The estimation of $\sigma_n$ has been recognized as an important  problem in the literature \cite{Passemier:2014, cai_han_pan, ding2022tracy}, and our proposed estimate $\hat\sigma_n$ is the first that is supported by a consistency guarantee in the subcritical regime.

   It is also notable that previous works on related test statistics, such as the widely-used \emph{Onatski's statistic}~\eqref{eqn:onatskinew}, have sidestepped this scale estimation problem by relying on ratios of sample eigengaps. As we will explain in Section~\ref{sec_gap_statisitcs}, the use of gap ratios has substantial drawbacks that our approach is able to overcome. Furthermore, we demonstrate empirically that $T_n$ often achieves higher power than Onatski's statistic in the context of the problem~\eqref{testing_problem_heu}. 
   
\item We prove that the level of the test statistic $T_n$ can be controlled asymptotically, and that it is power consistent for certain alternatives.

\item As an application of the ideas underlying our proposed test, we develop a new bootstrap method for functionals of leading sample eigenvalues---which is the first method of this type that is known to be consistent in the subcritical regime. This is of particular interest, as the breakdown of the standard non-parametric bootstrap in the subcritical regime has been highlighted elsewhere in the literature as a serious issue~\citep{Karoui:2019}.
\end{enumerate}

\subsection{Problem formulation} A complete set of theoretical assumptions will be given in Section~\ref{sec_prelim}, but it is possible to explain our problem formulation here with only a bit of notation. For any $k=0,1,\dots,p-1$ such that $\lambda_{k+1}(\bfSigma)>0$, define the parameter $\xi_{n,k}$ to be the unique value in the interval $(0,1/\lambda_{k+1}(\bfSigma))$ that solves the equation
\begin{equation} \label{eq_def_xi}
    \frac{1}{p}\sum_{j=k+1}^p \Big(\frac{\lambda_j(\bfSigma)\xi_{n,k}}{1-\lambda_j(\bfSigma)\xi_{n,k}}\Big)^2 = \frac{n}{p}.
\end{equation}
In the random matrix theory literature, the subcritical regime is often defined in an asymptotic manner by requiring that $\lambda_1(\bfSigma)$ satisfy
\begin{equation}\label{eqn:usualdef}
    \limsup_{n\to\infty} \lambda_1(\bfSigma)\xi_{n,0} < 1,
\end{equation}
where $\bfSigma$ is implicitly regarded as a function of $n$, and $p/n$ converges to a positive constant as $n\to\infty$ \cite{leeschnelli2016, elkaroui2007}. Consequently, the reciprocal $1/\xi_{n,0}$ is often interpreted as a threshold, such that the size of $\lambda_1(\bfSigma)$ relative to $1/\xi_{n,0}$ determines whether the subcritical or supercritical regime holds.

However, with regard to hypothesis testing, the condition~\eqref{eqn:usualdef} is unsuitable as a null hypothesis in several respects:
\begin{itemize}
\item First, we wish to formulate the testing problem~\eqref{testing_problem_heu} from the perspective of a practitioner who is trying to understand the distribution that generated the data at hand.
From this standpoint, the condition~\eqref{eqn:usualdef} is not a natural hypothesis to test, because it involves a sequence of many distributions, and it does not refer specifically to the data-generating distribution of interest.

\item Second, if the condition~\eqref{eqn:usualdef} is viewed as a null hypothesis, it allows the class of ``null models'' to contain members that are arbitrarily close to the ``decision boundary'' where $\lambda_1(\bfSigma)\xi_{n,0}= 1$. This is problematic, because when the quantity $\lambda_1(\bfSigma)\xi_{n,0}$ is very close to 1, asymptotic approximations based on the subcritical and supercritical regimes may \emph{both} be inappropriate, as will be discussed below. 

\item Third, the definition of $\xi_{n,0}$ makes it impossible for the condition $\lambda_1(\bfSigma)>1/\xi_{n,0}$ to hold, which complicates the issue of defining an alternative hypothesis that negates~\eqref{eqn:usualdef}. 
\end{itemize}

To deal with the considerations just discussed, we propose to formalize the problem~\eqref{testing_problem_heu} as
 \begin{align} \label{hypothesis0}
      \mathsf{H}_{0,n}:  
       \lambda_1(\bfSigma)\leq \frac{1}{(1+\ve)\xin{1}}
      \quad \textnormal{vs.} \quad
   \mathsf{H}_{1,n}:  \lambda_1(\bfSigma)\geq \frac{1}{(1 -\ve)\xin{1}},
 \end{align}
 where $\varepsilon\in (0,1)$ is a user-specified parameter that is regarded as fixed with respect to $n$. 
 Here,  we use $\xi_{n,1}$ rather than $\xi_{n,0}$ for two reasons. One is that the definition of $\xi_{n,1}$ allows for the condition $\lambda_1(\bfSigma)>1/\xi_{n,1}$ to occur, whereas the definition of $\xi_{n,0}$ does not allow  $\lambda_1(\bfSigma)>1/\xi_{n,0}$. Another reason is that $\xi_{n,1}$ is close enough to $\xi_{n,0}$ so that it can still be interpreted in the same way as $\xi_{n,0}$. In particular, the difference $\xi_{n,0}-\xi_{n,1}$ turns out to be of order $1/p$ under conventional assumptions (see Lemma~\ref{lem_xin_xi1}), and so if $\mathsf{H}_{0,n}$ holds for all large $n$, then it can be shown that the usual subcritical condition~\eqref{eqn:usualdef} also holds (see Proposition \ref{thm_tw_law}). \\

 \noindent\emph{The necessity of $\varepsilon$}. The parameter $\varepsilon$ represents a degree of separation between $\mathsf{H}_{0,n}$ and $\mathsf{H}_{1,n}$, and it provides flexibility to control the difficulty of the testing problem. 
Parameters of this type appear frequently 
in non-parametric settings, where testing problems can become intractable without such a degree separation~\citep[][\textsection 6.2]{Gine:2021}~\citep[][\textsection 1.3]{Ingster:2003}. Further examples of settings where similar types of parameters are used include property testing problems in large-scale computing~\citep[][\textsection 11.3]{Ron:2010}~\citep[][\textsection 12.1]{Goldreich:2017}, as well as the testing of ``relevant hypotheses,'' which has recently gained traction in the statistical literature~\citep{Dette:2019,Dette:2020}.

In the present context, there is a specific reason why the phase transition leads to formulating the problem~\eqref{hypothesis0} in terms of $\varepsilon$. It is due to a very fine-grained effect that can occur when \smash{$|\lambda_1(\bfSigma)-1/\xi_{n,1}|=\mathcal{O}(n^{-1/3})$}, i.e.~when $\lambda_1(\bfSigma)$ is in a vanishing interval of radius $\mathcal{O}(n^{-1/3})$ centered at the threshold $1/\xi_{n,1}$. In this exceptional boundary case, the fluctuations of $\lambda_1(\hat\bfSigma)$ may not be well approximated by a Tracy-Widom or a Gaussian distribution, but by some hybrid of the two~\citep{Baik:BenArous:Peche:2005,Bao:2022}. Hence, the separation provided by $\varepsilon$ ensures that the hypotheses in~\eqref{hypothesis0} meaningfully distinguish between Tracy-Widom and Gaussian fluctuations.
Moreover, because the statistical literature generally focuses on approximations based on either the Tracy-Widom or Gaussian distributions, the formulation in~\eqref{hypothesis0} poses the question of interest for a user who is trying to determine the right way to apply inference procedures that are related to $\lambda_1(\hat\bfSigma)$.

Regarding the selection and interpretation of $\varepsilon$, it should be noted first that this parameter is \emph{unitless}, as it specifies a proportion of the unitless quantity $\lambda_1(\bfSigma)\xi_{n,1}$, and so there is no need to match the scale of $\varepsilon$ to data.
Likewise, simplest way of selecting $\varepsilon$ is in the same type of discretionary manner that a significance level is usually selected. 
Another way of thinking about $\varepsilon$ is to consider the relationship between significance levels and p-values, and then pursue an analogous relationship involving $\varepsilon$. More specifically, it is possible to compute the smallest $\varepsilon$ for which rejection of $\mathsf{H}_{0,n}$ occurs while holding the significance level $\alpha$ fixed. If this computed value is denoted as $\hat \varepsilon$ (i.e.~a p-value analogue), and if it turns out that $\hat\varepsilon$ is small compared to $n^{-1/3}$, then this could be interpreted as solid evidence against the subcritical regime. This is because smaller values of $\varepsilon$ expand the set of distributions satisfying $\mathsf{H}_{0,n}$ (making rejection harder), while the notion of ``small'' should be judged relative to $n^{-1/3}$, based on the radius of the interval around $1/\xi_{n,1}$ discussed earlier.

The quantity $\hat \varepsilon$ can also provide further insights, such as by producing an asymptotically valid one-sided confidence interval for $\lambda_1(\bfSigma)$ under $\mathsf{H}_{0,n}$, which will be established in Corollary~\ref{cor:CI}. 
In any case, these various possibilities should not overshadow a more fundamental point:
A certain degree of separation $\varepsilon$ really is necessary to make the distinction between the Tracy-Widom and Gaussian approximations for $\lambda_1(\hat\bfSigma)$ conceptually meaningful.

\subsection{Gap statistics and scale estimation}  \label{sec_gap_statisitcs}
The leading sample eigengap $\lambda_1(\hat\bfSigma)-\lambda_2(\hat\bfSigma)$ is an intuitive statistic for detecting the alternative $\mathsf{H}_{1,n}$. However, this statistic by itself is not adequate for testing, since its scale is unknown. This issue also occurs when \smash{$\lambda_1(\hat\bfSigma)-\lambda_2(\hat\bfSigma)$} is considered as a possible statistic for the distinct task of spike detection. In the spike detection context, a well-established strategy for dealing with the unknown scale is to use statistics that are constructed from \emph{gap ratios}. This strategy owes much of its popularity and influence to the seminal work of Onatski~\citep{onatski2009testing}, who introduced the statistic
\begin{align}\label{eqn:onatskinew}
    R_n(\kappa) = \max_{1 \leq j \leq \kappa} \frac{\lambda_j(\bfSigmahat) - \lambda_{j+1}(\bfSigmahat)}{\lambda_{j+1}(\bfSigmahat) - \lambda_{j+2}(\bfSigmahat)},
\end{align}
where $\kappa\geq 1$ is a fixed integer that plays the role of a tuning parameter. A valuable feature of the statistic $R_n(\kappa)$ is that it is asymptotically pivotal in the subcritical regime~\citep{onatski2009testing}, so that no parameters in the limiting distribution need to be estimated. Likewise, it would be reasonable to consider $R_n(\kappa)$ as a candidate test statistic for the proposed testing problem~\eqref{hypothesis0}, but as we explain below, this statistic has serious limitations that we seek to overcome. \\

\noindent\emph{Limitations of gap ratios.} The asymptotically pivotal property of $R_n(\kappa)$ comes at a steep price, as the performance of this statistic is contingent on the selection of $\kappa$. In essence, the source of difficulty is that the statistic $R_n(\kappa)$ will typically not be power consistent for $\mathsf{H}_{1,n}$ whenever $\bfSigma$ has more than $\kappa$ leading eigenvalues that are well-separated from the bulk eigenvalues. This phenomenon arises because if $\lambda_1(\bfSigma),\dots,\lambda_{\kappa+1}(\bfSigma)$ are well-separated from the bulk, then for each $j=1,\dots,\kappa$, both the numerator and the denominator in~\eqref{eqn:onatskinew} tend to be of the same order of magnitude \cite{zhang2022asymptotic}, and in such cases, the statistic $R_n(\kappa)$ will fail to diverge asymptotically.

To avoid this breakdown in the power of $R_n(\kappa)$, users might try to ``hedge their bets'' by favoring conservatively large choices of $\kappa$.
That is, if $\kappa$ is large enough so that there exists some $j\in\{1,\dots,\kappa\}$ such that $\lambda_j(\bfSigma)$ is well separated from the bulk and $\lambda_{j+1}(\bfSigma)$ is near the bulk, then the sample eigengaps $\lambda_j(\hat \bfSigma)-\lambda_{j+1}(\hat\bfSigma)$ and $\lambda_{j+1}(\hat \bfSigma)-\lambda_{j+2}(\hat\bfSigma)$ will tend to be of different orders---which will cause $R_n(\kappa)$ to diverge asymptotically. (See~\cite{knowles2017anisotropic, zhang2022asymptotic} or the proof of Theorem \ref{thm_alternative}.) However, it turns out that conservatively large choices of $\kappa$ still erode power, because the critical values of $R_n(\kappa)$ increase substantially with $\kappa$. Thus, there is a higher ``hurdle'' that must be cleared in order for rejections to occur, and the negative impact on power has previously been noted in the literature~\cite{ding2022tracy}\label{Onatski_disc}.\label{introdisc} A conceptual illustration of how the choice of $\kappa$ affects the power of $R_n(\kappa)$ in the testing problem~\eqref{hypothesis0} is given in Figure~\ref{fig:TW}. Furthermore, we empirically demonstrate these drawbacks of $R_n(\kappa)$ in the numerical results presented in Section~\ref{sec_expt}.

\begin{figure}[H]
    \centering
    \includegraphics[width=0.8\linewidth]{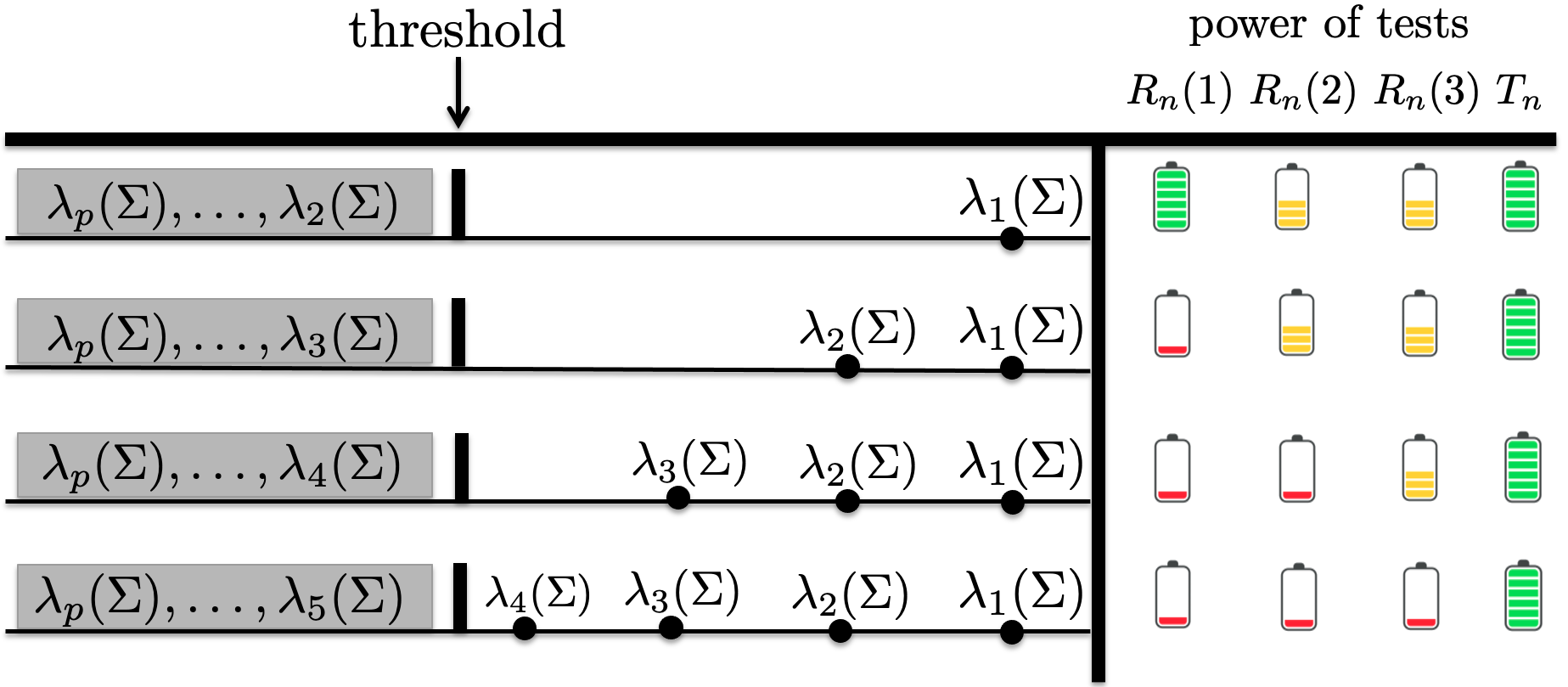}
    \caption{ A conceptual comparison of the proposed statistic $T_n$ with $R_n(\kappa)$ for $\kappa=1,2,3$ in detecting four instances of $\mathsf{H}_{1,n}$. Green cells correspond to cases where a test is expected to have power approaching 1 as $n\to\infty$. Yellow cells correspond to cases where $R_n(\kappa)$ may have power approaching 1 as $n\to\infty$, but the power may be reduced from taking a maximum over several gap ratios.  Red cells correspond to cases where $R_n(\kappa)$ is not expected to have power approaching 1 as $n\to\infty$. }
    \label{fig:TW}
\end{figure}

\noindent\emph{Direct scale estimation.} Based on the limitations of gap ratios, we propose a different approach, which is to construct a \emph{direct estimate} $\hat\sigma_n$ for the unknown scale of $\lambda_1(\hat\bfSigma)-\lambda_2(\hat\bfSigma)$, and use a test statistic of the form 
\begin{equation}\label{eqn:Tndef}
    T_n =  \frac{n^{2/3}}{\hat\sigma_n} \big(  \lambdahat - \lambda_2 (\bfSigmahat ) \big). 
\end{equation}
Although this statistic may appear simple at first sight, estimating the scale of $\lambda_1(\hat\bfSigma)-\lambda_2(\hat\bfSigma)$ is a challenging problem. As a matter of fact, we are not aware of any consistency guarantees for scale estimates of $\lambda_1(\hat\bfSigma)-\lambda_2(\hat\bfSigma)$ in settings like those considered here, but there are some estimates that have been proposed without guarantees \citep{cai_han_pan, Passemier:2014}. In Section~\ref{sec_sigma}, we propose a new method for this scale estimation problem, and in Proposition~\ref{thm_consistency_xi} we establish its consistency within the subcritical regime. Consequently, it is not necessary for us to rely on gap ratios, and the proposed test statistic $T_n$ is able to avoid the power losses associated with the statistic $R_n(\kappa)$ discussed earlier. Furthermore, in Section~\ref{sec_sim_bootstrap}, we confirm that this power advantage holds in simulations across a range of settings, and we present a real data example in which $T_n$ yields smaller p-values than $R_n(\kappa)$. More generally, considering that $R_n(\kappa)$ has been extensively used in
applications~\citep[e.g.][]{Heckman:2013,Bauer:2015,Miranda:2015,Kabundi:2020} and has been widely embraced in theoretical works \cite{ding2022tracy, leeschnelli2016, ding2018necessary}, the statistic $T_n$ may have benefits that extend beyond the scope of the problem~\eqref{hypothesis0}.

\subsection{A consistent bootstrap for the subcritical regime}
In recent years, there has been growing interest in bootstrapping statistics that depend on the eigenvalues of $\hat\bfSigma$ in high-dimensional settings~\citep[e.g.][]{Han:2018,Karoui:2019,Lopes:2019:Biometrika,Johnstone:2018,Ding:2023,Wang:Lopes:2023,Yu:2023,Yu:2024}.
Much of this interest is motivated by the fact that bootstrap methods allow the distributions of many types of statistics to be approximated in a user-friendly and unified way---which can bypass the inconveniences of working with intricate asymptotic formulas on a case-by-case basis. Consequently, this flexibility provides practitioners with greater freedom to explore a wider range of statistics for developing new inference procedures.

However, 
in spite of the potential benefits of bootstrap methods, there are currently no theoretically-supported methods for bootstrapping functionals of the leading sample eigenvalues when the subcritical regime holds.
This state of affairs can be attributed to the fact that the behavior of $\lambda_1(\hat\bfSigma),\dots,\lambda_k(\hat\bfSigma)$ for fixed $k\geq 1$ depends heavily on which regime the model occupies. More specifically, because bootstrap methods are based on using an estimated model to generate simulated data, they are unlikely to work for statistics depending on $\lambda_1(\hat\bfSigma),\dots,\lambda_k(\hat\bfSigma)$ if the estimated model is in the wrong regime. 

To deal with this challenge, we apply our proposed test to the problem~\eqref{hypothesis0}, and if $\mathsf{H}_{0,n}$ is not rejected, then we generate data from a ``bootstrap world'' that is designed to satisfy an approximate version of $\mathsf{H}_{0,n}$. In particular, the population eigenvalues in the bootstrap world
are constructed so that they are upper bounded by an estimate of $1/[(1+\varepsilon)\xi_{n,1}]$, which is detailed in Section~\ref{sec_xi}. Another key idea in designing the bootstrap world is to take advantage of a fundamental universality property of $\lambda_1(\hat\bfSigma),\dots,\lambda_k(\hat\bfSigma)$ that holds in the subcritical regime. Namely, the joint distribution of $\lambda_1(\hat\bfSigma),\dots,\lambda_k(\hat\bfSigma)$ is asymptotically the same as if the data in the real world are generated from the Gaussian distribution $\mathcal{N}(0,\bold{\Lambda})$, where $\bold{\Lambda}=\textup{diag}(\lambda_1(\bfSigma),\dots,\lambda_p(\bfSigma))$~\citep{knowles2017anisotropic,Wen:2022}. Accordingly, our proposed bootstrap method generates data from a fitted Gaussian distribution $\mathcal{N}(0,\tilde{\bold{\Lambda}})$, where $\tilde{\bold{\Lambda}}$ is a specially designed estimate of $\bold{\Lambda}$ that approximately conforms with the subcritical regime.

\subsection{Outline} After laying out some technical preliminaries and assumptions in Section~\ref{sec_prelim}, we describe our proposed testing procedure in Section~\ref{sec_method}. Our theoretical results under the null and alternative hypothesis are respectively given in Sections~\ref{sec_null} and~\ref{sec_alternative}. Next, in Section~\ref{sec_stat_appl}, we cover a methodological application to bootstrapping functionals of the leading sample eigenvalues. Numerical results involving both simulated data and financial data are presented in Section~\ref{sec_expt}. Lastly, all proofs are deferred to Section~\ref{sec_proofs}.

\section{Preliminaries and setup} \label{sec_prelim}
Let $\bold Y\in\R^{n\times p}$ be a random data matrix whose rows consist of $n$ centered i.i.d.~observations in $\R^p$. The associated  sample covariance matrix is defined as
\begin{equation}
    \bfSigmahat  = \frac{1}{n}\bold Y\ttop \bold Y
\end{equation}
and its population counterpart is 
\begin{equation}
    \bfSigma=\E(\bfSigmahat).
\end{equation}
In addition, define the $n\times n$ companion matrix associated with $\bfSigmahat$ as
\begin{align}  \label{eq_def_companion}
\bfSigmahatu = \frac{1}{n} \bold Y\bold Y\ttop.
\end{align}
For any real $p\times p$ symmetric matrix $\bfA$, let $F^{\bfA}$ denote the empirical spectral distribution function of $\mathbf A$, which satisfies
\begin{equation*}
    F^{\bold A}(t)=\frac{1}{p}\sum_{j=1}^p 1\{\lambda_j(\bold A)\leq t\}
\end{equation*}
for any $t\in \R$, where $1\{\cdot\}$ is an indicator function. In the particular case when $\bold A=\bfSigma$, we write $$H_n=F^{\bfSigma}.$$
\noindent\emph{Stieltjes transforms and related parameters.} For any distribution $F$, we denote its Stieltjes transform by
\begin{align*}
	s_F (z) = \int \frac{1}{\lambda - z} d F(\lambda), \quad z \in \mathbb{C}^+ = \{ z \in \mathbb{C} : \operatorname{Im}(z)>0 \}. 
\end{align*}
We use the notation 
$
\su_n^0 = s_{\Fu^{y_n,H_n}},$
%
%
where $\su_n^0$ characterizes the distribution $\Fu^{y_n, H_n}$, and is the unique solution to the equation
\begin{align} \label{eq_mp_stieltjes}
		z = - \frac{1}{\su_n^0(z)} + y_n \int \frac{\lambda}{1 + \su_n^0(z) \lambda} dH_n(\lambda), \quad z \in \mathbb{C}^+,
	\end{align}
where $y_n = p/n$ is the dimension-to-sample-size ratio. Further background on this equation and the characterization of the distribution $\Fu^{y_n, H_n}$ can be found in \cite{knowles2017anisotropic, yao2015}.  The rightmost endpoint of the support of the distribution $\Fu^{y_n, H_n}$ is denoted by $r_n$ and is given by the formula~\cite[Lemma 2.5-2.6]{knowles2017anisotropic},
 \begin{align}
 \label{eqn:rndef}
 	r_n = \frac{1}{\xi_{n,0}} \lb 1 + y_n \int \frac{\lambda \xi_{n,0}}{1 - \lambda \xi_{n,0}} dH_n(\lambda) \rb.
 \end{align}
The critical parameter $\xi_{n,0}$ is also related to $\su_n^0$ and $r_n$ through the relation
 \begin{align} \label{eq_xi_formula}
 	\xi_{n,0} =  - \lim_{\eta \to 0} \su_n^0 ( r_n + i \eta) 
 	= - \su_n^0(r_n),
 \end{align}
 which was established in~\citep{silverstein1995analysis}. Furthermore, the parameter $\xi_{n,0}$ is needed to define the scale parameter $\sigma_n$ for the leading sample eigengap $\lambda_1(\hat \bfSigma)-\lambda_2(\hat\bfSigma)$ in the subcritical regime through the formula
 \begin{align} \label{def_sigma}
  \sigma_n^3 =  \frac{1}{\xi_{n,0}^3} \bigg( 1 + y_n \int \Big( \frac{\lambda \xi_{n,0}}{1 - \lambda\xi_{n,0}}\Big)^3 dH_n(\lambda) \bigg).
 \end{align}
 The interpretation of $\sigma_n$ as a scale parameter arises from  the following fundamental result~\citep{bao2015,leeschnelli2016,knowles2017anisotropic}: If the subcritical regime holds within the data-generating model given by~\ref{ass_mp_regime},~\ref{ass_mom}, and~\ref{ass_lsd} below, then for any fixed number $d\geq 1$, the random vector $\frac{n^{2/3}}{\sigma_n}(\lambda_j(\hat\bfSigma)-r_n)_{1\leq j\leq d}$ converges in distribution to a $d$-dimensional Tracy-Widom distribution, as defined below.\\

\noindent\emph{The multivariate Tracy-Widom distribution.} 
Let ${\bf{G}}\in \R^{N\times N}$ be a random matrix with i.i.d.~$\mathcal{N}(0,1)$ entries, and let ${\bf{W}}=\frac{1}{\sqrt{2}}({\bf{G}}+{\bf{G}}\ttop)$, which is commonly referred to as a Wigner or GOE$(N)$ matrix. For any fixed $d\geq 1$, it is known that as $N\to\infty$, the random vector $N^{2/3}(\lambda_j(\mathbf{W})-2)_{1\leq j\leq d}$
has a weak limit, which is referred to as the $d$-dimensional Tracy-Widom distribution. 
In particular, this definition shows that the $d$-dimensional Tracy-Widom distribution can be numerically approximated by generating GOE$(N)$ matrices for large $N$.
 ~\\

\noindent\emph{Data-generating model.} Throughout the paper, our work will be based on a data-generating model that is defined by the following conditions.
  \begin{enumerate}[label=(A\arabic*)]
  \item \label{ass_mp_regime} There is a number  $y \in (0,\infty)\setminus \{1\} $ such that $y_n = p/n$ satisfies $\lim_{n\to\infty} y_n = y$.
  \item \label{ass_mom}  The data matrix $\bold Y\in\R^{n\times p}$ has the form $\bold Y=\bold X \bfSigma^{1/2}$, where the matrix $\bold X$ is the upper-left $n\times p$ block of a doubly-infinite array of i.i.d.~random variables $\{x_{ij}:i\geq 1, j\geq 1\}$  such that $\E ( x_{11})=0, \E ( x_{11}^2) = 1$ and $\sup_{q\geq 1} q^{-1/\beta}(\E|x_{11}|^q)^{1/q}<\infty$ for some $\beta>0$.
  \item \label{ass_lsd} As $n\to\infty$, the distribution $H_n$ has a weak limit $H$. Also, the support of $H$ is a \smash{finite}  union of closed intervals in $(0,\infty)$, and there is a fixed compact interval in $(0, \infty)  $ containing the support of $H_n$ for all large $n$.
 \end{enumerate}

 \section{Testing procedure}\label{sec_method}
Before formally defining our testing procedure, we will assemble estimators for $\xi_{n,0}$, $\xi_{n,1}$, $H_n$, and $\sigma_n$ in the following three paragraphs.\\

 \noindent\emph{Estimation of $\xi_{n,0}$ and $\xi_{n,1}$.} Recall the formula $\xi_{n,0}= - \su_n^0(r_n)$ from equation~\eqref{eq_xi_formula}. Since $r_n$ approximately corresponds to the right edge of the limiting bulk sample spectrum, it is naturally estimated by $\lambda_1(\hat\bfSigma)$. Meanwhile, the Stieltjes transform $\su_n^0$ is the limit of $\su_n=s_{F^{\underline{\hat\bfSigma}}}$, but because the latter function is not defined at $\lambda_1(\hat\bfSigma)$, we will use the  modified version
 \label{sec_xi}
  \begin{align} \label{def_su_n_tilde}
        \tilde\su_n ( z ) 
  =  \frac{1}{n} \sum\limits_{j=2}^n
  \frac{1}{\lambda_j ( \underline {\bfSigmahat} ) - z } , \quad z \in \mathbb{C} \setminus [ \lambda_n(\underline{\bfSigmahat}), \lambda_2(\underline{\bfSigmahat}) ]. 
  \end{align}
  Likewise, we define the estimate
  \begin{align}\label{eqn:xihatdef}
   \hat\xi_n = 
   \begin{cases}
   - \tilde\su_n ( \lambda_1(\hat\bfSigma)  ) 
  & \textnormal{ if } \lambda_1(\bfSigmahat) > \lambda_2(\bfSigmahat), \\ 
  -\tilde{\su}_n(\lambda_1(\bfSigmahat)+1) & \textnormal{ if } \lambda_1(\bfSigmahat) = \lambda_2(\bfSigmahat).
  \end{cases}
  \end{align}
  We have intentionally chosen to write $\hat\xi_n$ rather than $\hat\xi_{n,0}$, because the parameters $\xi_{n,0}$ and $\xi_{n,1}$ are sufficiently close asymptotically that $\hat\xi_n$ may be regarded as an estimate of either of them. (It will be shown in Lemma~\ref{lem_xin_xi1} that $0< \xi_{n,1}-\xi_{n,0}\lesssim 1/p$.)\\

 \noindent\emph{Estimation of $H_n$.}
We estimate the empirical spectral distribution $H_n$ using a modified version of the QuEST procedure (see \cite{ledoit2015spectrum, ledoit2017numerical}). The modification is needed to ensure that the estimate for $H_n$ conforms with the subcritical regime, and this is achieved with specially designed truncation. To be precise, for each $j=1,\dots,p$, let $\tilde\lambda_{j,Q}$ be the $j/p$ quantile of the QuEST estimator for $H_n$, and let
\begin{align} \label{def_lambda_hat}
    \tilde\lambda_j = 
    \tilde\lambda_{j,Q}\wedge \ts \frac{1}{\hat\xi_n ( 1 + \ve) }
        \quad 1 \leq j \leq p.
\end{align}
In turn, our estimate of $H_n$ is defined as 
\begin{align} \label{def_H_hat}
    \tilde H_{n}(t) = \frac{1}{p} \sum\limits_{j=1}^p  1\{\tilde\lambda_{j}\leq t\}
\end{align}
for all real $t$.\\

 \noindent\emph{Estimation of $\sigma_n$.} \label{sec_sigma} Based on the formula~\eqref{def_sigma} for the scale parameter $\sigma_n$, we estimate it by directly substituting the estimates for $\xi_{n,0}$ and $H_n$ constructed above. That is, we define
 \begin{equation}
     \hat \sigma_n^3 =  \frac{1}{\hat\xi_n^3 } \bigg( 1 +  y_n \int \bigg( \frac{\lambda \hat\xi_n }{1 - \lambda\hat\xi_n } \bigg)^3 d\tilde{H}_{n}(\lambda)\bigg).
     \label{def_sigma_hat}
 \end{equation}

 \noindent\emph{Test statistic.} 
To test the null hypothesis \eqref{hypothesis0} for a specified value of $\varepsilon\in(0,1)$, we propose to use the statistic 
 \begin{align*}
      T_n = \frac{n^{2/3}}{\hat\sigma_n} \big( \lambda_1 (\hat \bfSigma) - \lambda_2 (\hat \bfSigma) \big).
 \end{align*}
 Note that the statistic depends on $\varepsilon$ through the eigenvalue estimates in~\eqref{def_lambda_hat} that underpin $\tilde H_n$ and $\hat\sigma_n$.

To specify the rejection criterion, let $(\zeta,\zeta')$ follow a two-dimensional Tracy-Widom distribution 
 and let $q_{1-\alpha}$ be the $(1-\alpha)$-quantile of the difference $\zeta - \zeta'$.
For a prescribed level $\alpha$, the proposed testing procedure rejects the null hypothesis in~\eqref{hypothesis0} when
\begin{align} \label{test1}
T_n>q_{1-\alpha}.
\end{align}

 \section{Theoretical results under the null hypothesis}\label{sec_null}
 In this section, we establish the limiting null distribution of the proposed test statistic $T_n$ in Theorem~\ref{thm_conv_T_null}. Since this theorem hinges on the performance of the estimates  $\hat \xi_{n}$, $\hat \sigma_n$, and $\tilde H_n$, we first demonstrate their consistency in the following proposition.  For the purpose of stating this result, convergence in probability and distribution are respectively denoted by $\xrightarrow{\P}$ and $\cond$.

 \begin{proposition} \label{thm_consistency_xi}
 Suppose that assumptions \ref{ass_mp_regime}-\ref{ass_lsd} are satisfied and that $\mathsf{H}_{0,n}$ holds for all large $n$. Then, the estimates $\hat\xi_n$, $\hat\sigma_n$, and $\tilde H_n$ satisfy the following limits as $n\to\infty$
 \begin{align} 
 \hat\xi_n - \xi_{n,k}
 &\xrightarrow{\P} 0  \textup{ \ \  for } k\in\{0,1\},\tag{a}\label{eqn:hatxithm}\\
  \tilde H_n & \cond H \textnormal{ in probability},
 \tag{b} \label{eqn:hatHthm}\\
\frac{\hat\sigma_n}{\sigma_n}&\xrightarrow{\P} 1 \tag{c}\label{eqn:hatsigmathm}.
  \end{align}
 \end{proposition}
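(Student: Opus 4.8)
The plan is to prove the three limits in sequence, since \eqref{eqn:hatsigmathm} depends on the first two. The essential inputs are: (i) standard random matrix theory estimates for the extreme eigenvalues $\lambda_1(\bfSigmahat), \lambda_2(\bfSigmahat)$ and $\lambda_j(\underline{\bfSigmahat})$ under the subcritical regime, which follow from \ref{ass_mp_regime}--\ref{ass_lsd} together with $\mathsf{H}_{0,n}$; (ii) consistency of the QuEST estimator for $H_n$ with respect to a metric metrizing weak convergence, as established in \cite{ledoit2015spectrum, ledoit2017numerical}; and (iii) the relation $\xi_{n,0} = -\su_n^0(r_n)$ from \eqref{eq_xi_formula} together with $0 < \xi_{n,1} - \xi_{n,0} \lesssim 1/p$ from Lemma~\ref{lem_xin_xi1}.

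\textbf{Step 1: Consistency of $\hat\xi_n$.} Because $\xi_{n,1} - \xi_{n,0} \to 0$ by Lemma~\ref{lem_xin_xi1}, it suffices to show $\hat\xi_n - \xi_{n,0} \xrightarrow{\P} 0$. Under $\mathsf{H}_{0,n}$ (subcritical), the largest sample eigenvalue $\lambda_1(\bfSigmahat)$ converges in probability to the right edge $r_n$ with the known rate $n^{-2/3}$, and $\lambda_2(\bfSigmahat)$ does as well, so in particular the event $\{\lambda_1(\bfSigmahat) > \lambda_2(\bfSigmahat)\}$ holds with probability tending to one and the top branch of \eqref{eqn:xihatdef} is the relevant one. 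I would then write $\hat\xi_n = -\tilde\su_n(\lambda_1(\bfSigmahat))$ and compare this to $-\su_n^0(r_n) = \xi_{n,0}$ in two moves: first, $\tilde\su_n$ (which excludes the single outlier $\lambda_1(\underline{\bfSigmahat})$) is uniformly close to $\su_n^0$ on a neighborhood of $r_n$ just outside the support — this is a standard concentration/rigidity statement for the Stieltjes transform of the companion matrix away from the spectrum — and second, since $\su_n^0$ extends continuously (indeed with bounded derivative) past $r_n$ and $\lambda_1(\bfSigmahat) \to r_n$, we get $\tilde\su_n(\lambda_1(\bfSigmahat)) - \su_n^0(r_n) \xrightarrow{\P} 0$. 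Care is needed because $\tilde\su_n$ is being evaluated essentially at the edge of the deleted spectrum; the exclusion of $\lambda_1(\underline{\bfSigmahat})$ in the definition \eqref{def_su_n_tilde} is precisely what keeps this well-behaved, and one controls the remaining sum using the rigidity of the $\lambda_j(\underline{\bfSigmahat})$ for $j \geq 2$.

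\textbf{Step 2: Consistency of $\tilde H_n$.} By construction the QuEST quantiles $\tilde\lambda_{j,Q}$ yield a distribution function converging weakly in probability to $H$; this is the content of the cited QuEST consistency results, valid in the subcritical regime. The only thing to check is that the truncation at $1/[\hat\xi_n(1+\varepsilon)]$ in \eqref{def_lambda_hat} is asymptotically inactive, or at worst harmless. Under $\mathsf{H}_{0,n}$ we have $\lambda_1(\bfSigma) \leq 1/[(1+\varepsilon)\xi_{n,1}]$, and by \ref{ass_lsd} the support of $H_n$ sits in a fixed compact interval; combined with Step 1 ($\hat\xi_n \to \xi_{n,0}$ in probability, and $\xi_{n,0}, \xi_{n,1}$ asymptotically equal), the threshold $1/[\hat\xi_n(1+\varepsilon)]$ exceeds $\lambda_1(\bfSigma) + \delta$ for some fixed $\delta > 0$ with probability tending to one, so asymptotically no QuEST quantile is truncated on the relevant part of the spectrum. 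Hence $\tilde H_n$ and the untruncated QuEST estimate agree in the limit, giving $\tilde H_n \cond H$ in probability. One mild subtlety: one must make sure the finitely many truncated quantiles (those near the very top) do not move mass by more than $o(1)$ in, say, L\'evy metric — but each such quantile is moved by at most a bounded amount and there are $o(p)$ of them, so the L\'evy distance contribution is negligible.

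\textbf{Step 3: Consistency of $\hat\sigma_n/\sigma_n$.} Compare \eqref{def_sigma_hat} with \eqref{def_sigma}. Write $\hat\sigma_n^3/\sigma_n^3$ as a ratio of $\xi$-factors times a ratio of the bracketed integrals. The prefactor $\xi_{n,0}^3/\hat\xi_n^3 \xrightarrow{\P} 1$ by Step 1, using that $\xi_{n,0}$ is bounded away from $0$ and $\infty$ (which follows since $1/\xi_{n,0} \asymp r_n$ lies in a fixed compact set by \ref{ass_lsd} and \eqref{eqn:rndef}). For the integral factor, note the integrand $\lambda \mapsto (\lambda\xi/(1-\lambda\xi))^3$ is, for $\xi$ in a small neighborhood of $\xi_{n,0}$ and $\lambda$ in the fixed compact support interval, jointly continuous and uniformly bounded — here the key point is that $1 - \lambda\xi_{n,0}$ stays bounded away from zero on the support of $H_n$ precisely because $\mathsf{H}_{0,n}$ forces $\lambda_1(\bfSigma)\xi_{n,0}$ to be bounded below $1$ (indeed below $1/(1+\varepsilon) \cdot \xi_{n,0}/\xi_{n,1} \to 1/(1+\varepsilon)$). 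Then a standard argument combining weak convergence $\tilde H_n \cond H$ (Step 2) with the uniform convergence $\hat\xi_n \to \xi_{n,0}$ (Step 1) and uniform continuity/boundedness of the integrand gives $\int (\lambda\hat\xi_n/(1-\lambda\hat\xi_n))^3 \,d\tilde H_n(\lambda) - \int (\lambda\xi_{n,0}/(1-\lambda\xi_{n,0}))^3\,dH_n(\lambda) \xrightarrow{\P} 0$, and since this quantity is bounded and $\sigma_n^3 \xi_{n,0}^3 = 1 + y_n(\cdots) \geq 1$ is bounded below, dividing yields the ratio of brackets $\to 1$. Combining, $\hat\sigma_n^3/\sigma_n^3 \xrightarrow{\P} 1$, hence $\hat\sigma_n/\sigma_n \xrightarrow{\P} 1$ by continuity of the cube root.

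\textbf{Main obstacle.} The delicate part is Step 1: controlling $\tilde\su_n$ evaluated at the random point $\lambda_1(\bfSigmahat)$, which sits right at the edge of the spectrum that defines $\tilde\su_n$'s domain. This requires combining (a) edge rigidity for $\lambda_1(\bfSigmahat)$ and $\lambda_2(\bfSigmahat)$ in the subcritical regime, (b) a local law / convergence rate for $\tilde\su_n - \su_n^0$ uniform on a shrinking neighborhood just outside $[\lambda_n(\underline{\bfSigmahat}), \lambda_2(\underline{\bfSigmahat})]$, and (c) regularity of $\su_n^0$ at the edge — and making sure none of these degenerate as $n\to\infty$. Everything downstream (Steps 2 and 3) is comparatively routine bookkeeping once Step 1 and QuEST consistency are in hand.
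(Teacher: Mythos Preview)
Your overall strategy matches the paper's: part (a) via comparing $\tilde\su_n(\lambda_1(\bfSigmahat))$ to $\su_n^0(r_n)$ through a local law and edge rigidity, part (b) via QuEST consistency plus controlling the truncation, and part (c) by plugging (a) and (b) into the formula and using that $1-\lambda\xi$ stays bounded away from zero under $\mathsf{H}_{0,n}$. Your identification of Step~1 as the delicate part is exactly right; the paper handles it by inserting a small imaginary part $i\eta_n$ with $\eta_n = n^{-1+\delta}$, splitting $|\hat\xi_n + \su_n(\lambda_1(\bfSigmahat)+i\eta_n)|$ according to whether $\lambda_1(\bfSigmahat)-\lambda_j(\underline\bfSigmahat) < n^{-1/3}$, and bounding the number of eigenvalues in that window via the square-root edge behavior of the Mar\v cenko--Pastur density.

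There is one genuine inaccuracy in Step~2. You claim the truncation threshold $1/[\hat\xi_n(1+\varepsilon)]$ exceeds $\lambda_1(\bfSigma)+\delta$ for some fixed $\delta>0$ with high probability. This is false in general: $\mathsf{H}_{0,n}$ allows $\lambda_1(\bfSigma) = 1/[(1+\varepsilon)\xi_{n,1}]$ exactly, and since $\hat\xi_n \to \xi_{n,0}$ with $\xi_{n,1}-\xi_{n,0} = O(1/p)$, the threshold and $\lambda_1(\bfSigma)$ can coincide in the limit, leaving no fixed gap $\delta$. What \emph{is} true (and what the paper uses) is that the threshold converges to $b_0 = 1/[\xi_0(1+\varepsilon)] \geq u$, the right endpoint of $\supp(H)$; then QuEST consistency gives that for any $\delta>0$ only $o_{\P}(p)$ QuEST quantiles exceed $b_0+\delta$, while those in $(b_0, b_0+\delta]$ are moved by at most $\delta$. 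Your ``mild subtlety'' remark gestures at exactly this correct argument, so the fix is to drop the incorrect claim about a fixed gap above $\lambda_1(\bfSigma)$ and make the $o(p)$-truncated-quantiles argument the primary one rather than an afterthought.
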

\noindent These three limits are proven in Section \ref{sec_proof_consistency_xi}. The limit $\hat\sigma_n / \sigma_n \xrightarrow{\P} 1$ may be of  independent interest, since consistent estimation of $\sigma_n$ has not previously been established.

The following theorem is the main result of this section and ensures that the rejection criterion~\eqref{test1} corresponds to a test with an asymptotically controlled level. 
 \begin{theorem} \label{thm_conv_T_null}
    If  \ref{ass_mp_regime}-\ref{ass_lsd} are satisfied and $\mathsf{H}_{0,n}$ holds for all large $n$, then as $n\to\infty$, 
    \begin{align*}
        T_n \cond \zeta - \zeta',
    \end{align*}
    where $(\zeta, \zeta')$ follows a two-dimensional Tracy-Widom distribution. In particular, for any fixed $\alpha\in(0,1)$, we have $\P(T_n>q_{1-\alpha})\to \alpha$ as $n\to\infty$.
 \end{theorem}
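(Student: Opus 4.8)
The plan is to reduce the limiting distribution of $T_n$ to that of the leading sample eigengap divided by the \emph{deterministic} scale $\sigma_n$, and then invoke the multivariate Tracy--Widom limit for the top sample eigenvalues in the subcritical regime. First I would record the decomposition
\begin{equation*}
    T_n = \frac{\sigma_n}{\hat\sigma_n}\cdot\frac{n^{2/3}}{\sigma_n}\big(\lambda_1(\hat\bfSigma)-\lambda_2(\hat\bfSigma)\big)
    = \frac{\sigma_n}{\hat\sigma_n}\cdot\left[\frac{n^{2/3}}{\sigma_n}\big(\lambda_1(\hat\bfSigma)-r_n\big) - \frac{n^{2/3}}{\sigma_n}\big(\lambda_2(\hat\bfSigma)-r_n\big)\right].
\end{equation*}
By Proposition~\ref{thm_consistency_xi}(c) the prefactor $\sigma_n/\hat\sigma_n\xrightarrow{\P}1$, so by Slutsky it suffices to show that the bracketed quantity converges in distribution to $\zeta-\zeta'$. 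For that I would first need to check that the hypothesis $\mathsf{H}_{0,n}$ (holding for all large $n$) indeed places the model in the subcritical regime in the sense required by the Tracy--Widom edge universality results --- this is exactly what Proposition~\ref{thm_tw_law} (referenced in the excerpt, together with Lemma~\ref{lem_xin_xi1} controlling $\xi_{n,0}-\xi_{n,1}=O(1/p)$) provides: if $\lambda_1(\bfSigma)\le 1/[(1+\ve)\xi_{n,1}]$ eventually, then $\limsup_n \lambda_1(\bfSigma)\xi_{n,0}<1$, so the classical subcritical condition~\eqref{eqn:usualdef} holds.

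Granting the subcritical condition, the cited edge-universality results \cite{bao2015,leeschnelli2016,knowles2017anisotropic} (stated just above in the excerpt, for the data-generating model \ref{ass_mp_regime}--\ref{ass_lsd}) give that for any fixed $d\ge 1$,
\begin{equation*}
    \Big(\tfrac{n^{2/3}}{\sigma_n}\big(\lambda_j(\hat\bfSigma)-r_n\big)\Big)_{1\le j\le d} \;\cond\; (\mathrm{TW}_1,\dots,\mathrm{TW}_d),
\end{equation*}
the $d$-dimensional Tracy--Widom law. Taking $d=2$ and applying the continuous mapping theorem to the difference map $(a,b)\mapsto a-b$ yields
\begin{equation*}
    \frac{n^{2/3}}{\sigma_n}\big(\lambda_1(\hat\bfSigma)-\lambda_2(\hat\bfSigma)\big)\;\cond\;\zeta-\zeta',
\end{equation*}
where $(\zeta,\zeta')$ has the two-dimensional Tracy--Widom distribution. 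Combining with the Slutsky step above gives $T_n\cond\zeta-\zeta'$. For the final sentence, since $q_{1-\alpha}$ is by definition the $(1-\alpha)$-quantile of $\zeta-\zeta'$, and since the distribution of $\zeta-\zeta'$ is continuous (so that its CDF is continuous at $q_{1-\alpha}$), the Portmanteau theorem gives $\P(T_n>q_{1-\alpha})\to\P(\zeta-\zeta'>q_{1-\alpha})=\alpha$.

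The main obstacle is not any single estimate but rather making sure the edge-universality input is being applied under exactly the right hypotheses: one must confirm that $\mathsf{H}_{0,n}$ for all large $n$ implies the precise form of the subcritical condition used in \cite{knowles2017anisotropic} (including any requirement about $\lambda_1(\bfSigma)$ being separated from $1/\xi_{n,0}$ uniformly, which the $(1+\ve)$-gap in $\mathsf{H}_{0,n}$ is designed to furnish), and that assumptions \ref{ass_mp_regime}--\ref{ass_lsd} match the regularity conditions in those references (the moment condition in \ref{ass_mom}, the bounded-support condition in \ref{ass_lsd}, and $y\ne 1$). Once that alignment is in place, the remainder --- Slutsky with Proposition~\ref{thm_consistency_xi}(c), the continuous mapping theorem, and the quantile/Portmanteau argument --- is routine. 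A minor technical point worth noting is that the $\mathsf{H}_{1,n}$-type degenerate case $\lambda_1(\hat\bfSigma)=\lambda_2(\hat\bfSigma)$ occurs with probability tending to zero under $\mathsf{H}_{0,n}$, so the branch in the definition of $\hat\xi_n$ does not affect the limit.
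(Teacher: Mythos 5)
Your proof is correct and follows essentially the same route as the paper: the paper likewise combines Slutsky's theorem with Proposition~\ref{thm_consistency_xi}\eqref{eqn:hatsigmathm} and the joint Tracy--Widom limit~\eqref{eqn:jointTWlimit} (which is justified by Proposition~\ref{thm_tw_law} establishing edge regularity under $\mathsf{H}_{0,n}$). Your write-up is more explicit about the continuous-mapping and Portmanteau steps, but the decomposition and the key inputs are identical to the paper's.
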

Theorem \ref{thm_conv_T_null} follows by combining Slutsky's theorem with Proposition \ref{thm_consistency_xi}\eqref{eqn:hatsigmathm} and the limit \eqref{eqn:jointTWlimit} given later. Thus, the primary technical challenge in establishing Theorem \ref{thm_conv_T_null} lies in proving Proposition \ref{thm_consistency_xi}. 

As an added benefit of our work so far, it is possible to develop confidence intervals for the parameters $\lambda_1(\bfSigma)$ and $r_n$.
For this purpose, define  $\hat \ve$  as the smallest $\ve>0$ for which $\mathsf{H}_{0,n}$ is rejected based on the statistic $T_n=T_n(\ve)$ at a fixed nominal level $\alpha$. More formally, since $T_n(\ve)$ is a non-decreasing function of $\ve$, we may define $\hat\ve = T_n\inv (q_{1-\alpha}),$ where $T_n\inv (q) = \inf\{\ve>0: T_n(\ve)\geq q\}$ denotes the generalized inverse.

\begin{corollary} \label{cor:CI}
  Fix any $\alpha\in (0,1)$.  If assumptions \ref{ass_mp_regime}-\ref{ass_lsd} are satisfied and if $\mathsf{H}_{0,n}$ holds for all large $n$, then 
     \begin{align*}
         \PR \bigg( \lambda_1(\bfSigma) \in \Big( \textstyle\frac{1}{( 1 + \hat\ve )  \hat\xi_n  },\infty\Big) \bigg)  \geq 1 - \alpha + o(1)
     \end{align*}
     and 
     \begin{align*}
         \PR \bigg(  r_n \in \Big[\lambdahat \pm \ts\frac{ q_{\alpha/2}' \hat\sigma_n }{n^{2/3}}\Big] \bigg) = 1- \alpha+o(1).
     \end{align*}
     where $ q_{\alpha/2}'$ denotes the $(\alpha/2)$-quantile of the one-dimensional Tracy-Widom distribution. 
\end{corollary}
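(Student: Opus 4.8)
The plan is to derive the confidence interval for $r_n$ directly from the Tracy--Widom limit together with the consistency of $\hat\sigma_n$, and to derive the interval for $\lambda_1(\bfSigma)$ by exploiting the duality between the level-$\alpha$ test and the p-value analogue $\hat\ve$, evaluated at the data-dependent parameter at which the null hypothesis becomes tight.

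\emph{Interval for $r_n$.} This part I would obtain quickly from ingredients already in place. Under \ref{ass_mp_regime}--\ref{ass_lsd} and the subcritical regime entailed by $\mathsf{H}_{0,n}$, the first coordinate of the Tracy--Widom limit~\eqref{eqn:jointTWlimit} gives $\frac{n^{2/3}}{\sigma_n}\big(\lambda_1(\hat\bfSigma)-r_n\big)\cond\zeta$, with $\zeta$ one-dimensional Tracy--Widom distributed. Combining this with Proposition~\ref{thm_consistency_xi}\eqref{eqn:hatsigmathm} via Slutsky's theorem yields $V_n:=\frac{n^{2/3}}{\hat\sigma_n}\big(\lambda_1(\hat\bfSigma)-r_n\big)\cond\zeta$. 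Since $\zeta$ is continuous, $\P(q_{\alpha/2}'\le V_n\le q_{1-\alpha/2}')\to 1-\alpha$, and solving the event $\{q_{\alpha/2}'\le V_n\le q_{1-\alpha/2}'\}$ for $r_n$ produces the stated two-sided interval (its endpoints being expressed through the corresponding Tracy--Widom quantiles).

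\emph{Interval for $\lambda_1(\bfSigma)$: reduction.} With probability $1-o(1)$ we have $\lambda_1(\hat\bfSigma)>\lambda_2(\hat\bfSigma)$ and hence $\hat\xi_n>0$, so the coverage event is exactly
\[
\Big\{\lambda_1(\bfSigma)\le\tfrac{1}{(1+\hat\ve)\hat\xi_n}\Big\}=\big\{\hat\ve\le\hat\ve_n^*\big\},\qquad \hat\ve_n^*:=\tfrac{1}{\lambda_1(\bfSigma)\hat\xi_n}-1.
\]
Because $\ve\mapsto T_n(\ve)$ is non-decreasing and continuous (the truncation in~\eqref{def_lambda_hat} and the finite sum in~\eqref{def_sigma_hat} are continuous in $\ve$), its generalized inverse satisfies $\{\hat\ve\le c\}=\{T_n(c)\ge q_{1-\alpha}\}$ for every $c\in(0,\infty)$, so I would reduce the claim to proving $\P\big(T_n(\hat\ve_n^*)\ge q_{1-\alpha}\big)\to\alpha$. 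In the subcritical regime $\lambda_1(\bfSigma)\xi_{n,0}$ stays bounded away from $1$ and $\hat\xi_n\xrightarrow{\P}\xi_{n,0}$, so $\hat\ve_n^*\in(0,\infty)$ with probability $1-o(1)$; the complementary events (including $\hat\ve=\infty$, where the interval equals $(0,\infty)$) have negligible probability and can only inflate coverage, which is consistent with the ``$\ge$'' in the statement.

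\emph{Interval for $\lambda_1(\bfSigma)$: key step and main obstacle.} The reason for choosing $\ve=\hat\ve_n^*$ is that then the truncation threshold $\tfrac{1}{\hat\xi_n(1+\ve)}$ in~\eqref{def_lambda_hat} equals exactly the true largest population eigenvalue $\lambda_1(\bfSigma)$. I would then show that, with this truncation, $\tilde H_n\cond H$ in probability and $\hat\sigma_n(\hat\ve_n^*)/\sigma_n\xrightarrow{\P}1$, essentially by re-running the proof of Proposition~\ref{thm_consistency_xi}\eqref{eqn:hatHthm}--\eqref{eqn:hatsigmathm}: clipping the QuEST quantiles at $\lambda_1(\bfSigma)$ is asymptotically harmless because $\sup(\supp H)\le\liminf_n\lambda_1(\bfSigma)$, so QuEST exceeds $\lambda_1(\bfSigma)$ on at most a vanishing fraction of quantiles and by at most $o(1)$, while subcriticality keeps $\lambda_1(\bfSigma)\hat\xi_n$ bounded away from $1$ so that the integrand $\big(\lambda\hat\xi_n/(1-\lambda\hat\xi_n)\big)^3$ stays bounded on the clipped support. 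Granting these limits, Slutsky's theorem together with~\eqref{eqn:jointTWlimit} gives $T_n(\hat\ve_n^*)=\frac{n^{2/3}}{\hat\sigma_n(\hat\ve_n^*)}\big(\lambda_1(\hat\bfSigma)-\lambda_2(\hat\bfSigma)\big)\cond\zeta-\zeta'$, whence $\P\big(T_n(\hat\ve_n^*)\ge q_{1-\alpha}\big)\to\P(\zeta-\zeta'\ge q_{1-\alpha})=\alpha$, which finishes the argument. I expect this last step to be the main obstacle: Proposition~\ref{thm_consistency_xi} is stated for a \emph{fixed} user parameter $\ve$, whereas here the statistic must be controlled at the \emph{data-dependent} value $\hat\ve_n^*$, so one cannot invoke that proposition as a black box, but must revisit its proof with the truncation placed at $\lambda_1(\bfSigma)$ and verify that the underlying estimates are uniform enough to tolerate this substitution; a crude monotonicity sandwich of $\hat\ve_n^*$ between deterministic sequences will not do, since $T_n(a)$ and $T_n(b)$ carry different Tracy--Widom scalings. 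The remaining bookkeeping---monotonicity and continuity of $T_n(\cdot)$, the inversion identity, and the degenerate events---is routine by comparison.
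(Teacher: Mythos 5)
Your proposal is essentially the same as the paper's own proof. The paper likewise handles the $r_n$ interval implicitly via \eqref{eqn:jointTWlimit} and Proposition~\ref{thm_consistency_xi}\eqref{eqn:hatsigmathm} plus Slutsky, and for the $\lambda_1(\bfSigma)$ interval it defines exactly your $\hat\ve_n^*$ (called $\breve\varepsilon$ there, with $1/(1+\breve\varepsilon)=\lambda_1(\bfSigma)\hat\xi_n$), observes that the truncation threshold then equals $\lambda_1(\bfSigma)$, re-derives $\breve H_n\cond H$ and $\breve\sigma_n-\sigma_n=o_\P(1)$ by the same arguments as in Proposition~\ref{thm_consistency_xi}\eqref{eqn:hatHthm}--\eqref{eqn:hatsigmathm}, concludes $T_n(\breve\varepsilon)\cond\zeta-\zeta'$, and then uses the monotonicity/continuity inversion identity $\{\hat\ve\le\breve\ve\}\subseteq\{T_n(\breve\ve)\ge q_{1-\alpha}\}$ exactly as you describe, including the care required for the $\hat\ve=\infty$ case — so the ``main obstacle'' you flagged (that Proposition~\ref{thm_consistency_xi} cannot be invoked as a black box at a data-dependent $\ve$) is precisely what the paper addresses by explicitly re-running those two consistency arguments with the truncation at $\lambda_1(\bfSigma)$.
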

 \noindent The proof of Corollary \ref{cor:CI} is given in Section \ref{sec_proof_thm_conv_T_null}. 

\section{Theoretical results under alternative hypotheses} \label{sec_alternative}
 In this section, we study the power of the proposed test under alternative hypotheses that are particular instances of $\mathsf{H}_{1,n}$, defined in~\eqref{hypothesis0}. These instances specify the particular number of supercritical population eigenvalues, whereas $\mathsf{H}_{1,n}$  allows for various numbers of such eigenvalues. In detail, if   $\varepsilon\in (0,1)$ and $K\geq 1$ are fixed with respect to $n$, then the alternative $\mathsf{H}_{1,n}(K)$ defined below has the interpretation that there are exactly $K$ supercritical population eigenvalues,
\begin{align} \label{def_alternative_general_NEW}
\small
    \mathsf{H}_{1,n}(K): \ \   \lambda_j(\bfSigma)   \geq \frac{1}{(1 - \ve)\xi_{n,j}} \textup{ \ for \  $j=1,\dots,K$, \ \ \ \ and \ \ \ \ }
    \lambda_{K+1}(\bfSigma)  \leq \frac{1}{( 1 + \ve) \xi_{n,K}}.
\end{align}

Before stating the main result on power consistency, it is worth pointing out a connection between the hypothesis $\mathsf{H}_{1,n}(K)$ and a standard formulation of supercritical eigenvalues in the random matrix theory literature. For any $\beta\not\in \textup{supp}(H)$, define the function
\begin{equation} \label{eq_def_psi}
    \psi(\beta) = \beta +  y \beta \int \frac{\lambda}{\beta - \lambda} dH(\lambda).
\end{equation}
The derivative $\psi'(\beta)$ exists for all $\beta\not\in\supp(H)$, and for convenience of presentation, we define $\psi'(\beta)=0$ for all $\beta\in\supp(H)$. Conceptually, the derivative is important, because the condition 
$$\psi'(\lambda_j(\bfSigma))>0$$
is often interpreted to mean that $\lambda_j(\bfSigma)$ is supercritical~\cite{li2020asymptotic, zhang2022asymptotic, jiang_bai}. 
As a concrete example, note that in a simple model where $\bfSigma=\textup{diag}(\lambda_1(\bfSigma),\dots,\lambda_K(\bfSigma),1,\dots,1)$ and $H$ is the pointmass at 1, it can be verified by direct calculation that $\psi'(\lambda_j(\bfSigma))>0$ is equivalent to $\lambda_j(\bfSigma) > 1 + \sqrt{y}$. The main point of Proposition~\ref{prop_sub_cond} below is that it gives general conditions under which $\mathsf{H}_{1,n}(K)$ implies that $\psi'(\lambda_j(\bfSigma))>0$ holds for all $1\leq j\leq K$. 
Its proof is given in Section \ref{sec_proof_thm_alternative}.
\begin{proposition}\label{prop_sub_cond}
    Suppose that \ref{ass_mp_regime} and \ref{ass_lsd} hold. Also, suppose that $\lambda_1(\bfSigma)>\dots >\lambda_K(\bfSigma)$ are fixed with respect to $n$, and that $\mathsf{H}_{1,n}(K)$ holds for all large $n$. Then, the condition $\psi'(\lambda_j(\bfSigma))>0$ holds for all $1 \leq j \leq K.$
\end{proposition}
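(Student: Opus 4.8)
The plan is to connect the defining equation for $\xi_{n,j}$ in~\eqref{eq_def_xi} with the sign condition on $\psi'$, passing through the asymptotic limit. First I would record the explicit formula for $\psi'$: differentiating~\eqref{eq_def_psi} gives
\begin{align*}
\psi'(\beta) = 1 - y\int \frac{\lambda^2}{(\beta-\lambda)^2}\,dH(\lambda),
\end{align*}
so that for $\beta\notin\supp(H)$ the condition $\psi'(\beta)>0$ is equivalent to $y\int \lambda^2(\beta-\lambda)^{-2}\,dH(\lambda) < 1$. Writing this in terms of $t=1/\beta$, it becomes $y\int (\lambda t)^2(1-\lambda t)^{-2}\,dH(\lambda) < 1$, which is precisely the left side of the fixed-point equation~\eqref{eq_def_xi} (with $H_n$ replaced by $H$ and a $k$-dependent truncation of the sum). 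So the strategy is: show that $\beta_j := \lambda_j(\bfSigma)$, which is fixed, has reciprocal $1/\beta_j$ strictly \emph{below} the value $t_\star$ at which $y\int (\lambda t)^2(1-\lambda t)^{-2}\,dH(\lambda) = 1$, hence $\psi'(\beta_j)>0$ by monotonicity of the integral in $t$.

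The key comparison comes from $\mathsf{H}_{1,n}(K)$, which forces $\lambda_j(\bfSigma) \geq 1/[(1-\varepsilon)\xi_{n,j}]$, i.e. $\xi_{n,j}\lambda_j(\bfSigma) \geq 1/(1-\varepsilon) > 1$. Now I would analyze the limiting behavior of $\xi_{n,j}$. Since $\lambda_1(\bfSigma)>\cdots>\lambda_K(\bfSigma)$ are fixed and the bulk $\lambda_{K+1}(\bfSigma),\dots,\lambda_p(\bfSigma)$ has empirical distribution converging to $H$ (supported away from $0$ and bounded, by~\ref{ass_lsd}), the sum in~\eqref{eq_def_xi} for $k=j$ splits into finitely many ``spike'' terms $j'=j+1,\dots,K$ plus a bulk term whose empirical measure converges weakly to $H$. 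The spike terms are $O(1/p)$ contributions to the normalized sum (there are finitely many of them divided by $p$), while $n/p\to y$. Hence in the limit, any subsequential limit $\xi_j^\infty$ of $\xi_{n,j}$ must satisfy
\begin{align*}
y\int \Big(\frac{\lambda\,\xi_j^\infty}{1-\lambda\,\xi_j^\infty}\Big)^2\,dH(\lambda) = 1,
\end{align*}
provided $\xi_j^\infty$ stays in the admissible range; one should check, using the analysis of the map $t\mapsto y\int(\lambda t)^2(1-\lambda t)^{-2}dH(\lambda)$ on $(0,1/\sup\supp(H))$ (it is continuous, increasing, ranges from $0$ to $+\infty$), that the solution is unique and that $\xi_{n,j}$ does converge to it; this is essentially the argument already used to justify~\eqref{eqn:usualdef} from $\mathsf{H}_{0,n}$ via Lemma~\ref{lem_xin_xi1} and Proposition~\ref{thm_tw_law}, run in the opposite direction. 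Denote this limit $\xi_\star$, so $\xi_\star$ is exactly the $t_\star$ above: $1/\xi_\star$ is the threshold where $\psi'$ changes sign.

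Combining the two facts: $\xi_{n,j}\lambda_j(\bfSigma)\geq 1/(1-\varepsilon)$ for all large $n$ gives, in the limit, $\xi_\star\,\lambda_j(\bfSigma) \geq 1/(1-\varepsilon) > 1 = \xi_\star/\xi_\star$, hence $\lambda_j(\bfSigma) > 1/\xi_\star$, i.e. $1/\lambda_j(\bfSigma) < \xi_\star$. Since $t\mapsto y\int(\lambda t)^2(1-\lambda t)^{-2}dH(\lambda)$ is strictly increasing on $(0,\xi_\star]$ and equals $1$ at $\xi_\star$, evaluating at $t=1/\lambda_j(\bfSigma)<\xi_\star$ gives a value strictly less than $1$, which is exactly $\psi'(\lambda_j(\bfSigma))>0$. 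Running this for each $j=1,\dots,K$ (noting $\xi_{n,j}$ is increasing in $j$, but we only need the individual bounds) completes the argument.

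The main obstacle I anticipate is the convergence $\xi_{n,j}\to\xi_\star$ and, relatedly, ruling out that $\xi_{n,j}\lambda_j(\bfSigma)$ (or $\xi_{n,j}$ times the sup of the support of $H_n$) drifts toward the singularity at $1$: one must argue the fixed point stays uniformly bounded away from $1/\sup_j\lambda_j(\bfSigma)$ in a controlled way, handle the truncation of the sum over the finitely many spikes carefully, and make sure the weak convergence $H_n\Rightarrow H$ together with the uniform bounded-support condition in~\ref{ass_lsd} is enough to pass the integral to the limit (the integrand $\lambda^2(1-\lambda t)^{-2}$ is continuous and bounded on the relevant compact $\lambda$-range as long as $t$ is bounded away from the reciprocal of the support, which is where the quantitative control is needed). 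This is a standard but somewhat delicate continuity/implicit-function argument about the Marčenko–Pastur fixed-point equation, of the same flavor as the results cited from \cite{knowles2017anisotropic}.
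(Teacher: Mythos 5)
Your overall strategy matches the paper's---pass to a limit of the critical parameters, identify the reciprocal as the threshold where $\psi'$ changes sign, and show $\mathsf{H}_{1,n}(K)$ places each $\lambda_j(\bfSigma)$ strictly above it---with the reformulation via $t=1/\beta$ being a change of variables on the paper's $\psi'$-based argument. However, a genuine error occurs in your treatment of $\xi_{n,j}$ for $j<K$. You claim that every subsequential limit $\xi_j^\infty$ of $\xi_{n,j}$ solves the limiting fixed-point equation, because the spike terms contribute only $O(1/p)$ to the normalized sum in~\eqref{eq_def_xi}. But the sum defining $\xi_{n,j}$ contains $p^{-1}\bigl(\lambda_{j+1}(\bfSigma)\xi_{n,j}/(1-\lambda_{j+1}(\bfSigma)\xi_{n,j})\bigr)^2$, and $\xi_{n,j}$ lives in $(0,1/\lambda_{j+1}(\bfSigma))$, so this term is not bounded a priori. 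In fact, under $\mathsf{H}_{1,n}(K)$ the condition on the $(j+1)$th eigenvalue gives $\xi_{n,j+1}\geq 1/[(1-\ve)\lambda_{j+1}(\bfSigma)]>1/\lambda_{j+1}(\bfSigma)>\xi_{n,j}$, so $\xi_{n,j}$ and $\xi_{n,j+1}$ are separated by a gap of order $\ve/\lambda_{j+1}(\bfSigma)$, bounded away from zero because $\lambda_{j+1}(\bfSigma)$ is fixed; they cannot share a common limit $\xi_\star$. What actually happens for $j<K$ is $\xi_{n,j}\to 1/\lambda_{j+1}(\bfSigma)$, with the spike term for $l=j+1$ remaining of order one, so the claimed limiting equation does not hold at $\xi_j^\infty$, and your deduction $\xi_\star\lambda_j(\bfSigma)\geq 1/(1-\ve)$ does not follow as written.

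The repair is exactly what the paper does: work only with $\xi_{n,K}$, whose defining sum starts at $l=K+1$ and has no spike terms, so that $\xi_{n,K}\to\xi_K$ with $\xi_K$ solving the limiting equation (the paper expresses this through $\psi_n'(1/\xi_{n,K})=0$, continuity of $\psi_n'$, and $H_{n,K+1}\cond H$, giving $\psi'(1/\xi_K)=0$). To cover $j<K$, use the monotonicity $\xi_{n,j}\leq\xi_{n,K}$, which holds because the sum defining $\xi_{n,j}$ has more non-negative terms; then $\lambda_j(\bfSigma)\geq 1/[(1-\ve)\xi_{n,j}]\geq 1/[(1-\ve)\xi_{n,K}]\to 1/[(1-\ve)\xi_K]>1/\xi_K$ for every $j\leq K$, and strict monotonicity of $\psi'$ on $(1/\xi_K,\infty)$ finishes the proof.
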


The following theorem establishes the power consistency of the proposed test, and is proved in Section \ref{sec_proof_thm_alternative}. 
\begin{theorem} \label{thm_alternative}
    Suppose that \ref{ass_mp_regime}-\ref{ass_lsd} hold. Also, suppose that $\lambda_1(\bfSigma)>\dots >\lambda_K(\bfSigma)$ are fixed with respect to $n$, and that $\mathsf{H}_{1,n}(K)$ holds for all large $n$. Then,
   \begin{align*}
      \lim_{n\to\infty} \PR \lb T_n>q_{1-\alpha} \rb = 1.
   \end{align*}
\end{theorem}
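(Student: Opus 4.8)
\textbf{Proof proposal for Theorem~\ref{thm_alternative}.}

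The plan is to show that, under $\mathsf{H}_{1,n}(K)$, the numerator $\lambda_1(\hat\bfSigma)-\lambda_2(\hat\bfSigma)$ of $T_n$ is bounded below by a positive constant with probability tending to $1$, while the scale estimate $\hat\sigma_n$ is bounded above by a constant with probability tending to $1$; combined with the factor $n^{2/3}$, this forces $T_n\to\infty$ in probability, so the fixed threshold $q_{1-\alpha}$ is exceeded with probability tending to $1$. The first task is the key substantive step. By Proposition~\ref{prop_sub_cond}, the hypothesis $\mathsf{H}_{1,n}(K)$ together with \ref{ass_mp_regime}, \ref{ass_lsd} guarantees $\psi'(\lambda_j(\bfSigma))>0$ for $j=1,\dots,K$, i.e.~$\lambda_1(\bfSigma),\dots,\lambda_K(\bfSigma)$ are genuine (asymptotically) supercritical spikes that are fixed and, by the strict ordering assumption, well separated from each other. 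Standard spiked-model results in the anisotropic/general-population setting (e.g.~\cite{knowles2017anisotropic, zhang2022asymptotic, bai2008central}) then give that $\lambda_j(\hat\bfSigma)\xrightarrow{\P}\psi(\lambda_j(\bfSigma))$ for $j=1,\dots,K$, where $\psi(\lambda_1(\bfSigma))>\psi(\lambda_2(\bfSigma))>\dots>\psi(\lambda_K(\bfSigma))$ are distinct limits strictly larger than the right edge; in particular $\lambda_1(\hat\bfSigma)-\lambda_2(\hat\bfSigma)$ converges in probability to the strictly positive constant $\psi(\lambda_1(\bfSigma))-\psi(\lambda_2(\bfSigma))>0$ when $K\ge 2$. (When $K=1$, one uses instead that $\lambda_2(\hat\bfSigma)$ sticks to $r_n$ up to $\mathcal{O}_{\P}(n^{-2/3})$, so the gap is again bounded below by the positive constant $\psi(\lambda_1(\bfSigma))-\lim r_n$.) Either way, there is a constant $c>0$ with $\P(\lambda_1(\hat\bfSigma)-\lambda_2(\hat\bfSigma)>c)\to 1$.

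It remains to control $\hat\sigma_n$ from above. I would argue that $\hat\sigma_n=\mathcal{O}_{\P}(1)$ directly from its defining formula~\eqref{def_sigma_hat}: the estimator $\hat\xi_n$ is, by construction, bounded away from $0$ (it is built from $\tilde\su_n$ evaluated at $\lambda_1(\hat\bfSigma)$, which lies to the right of the bulk, so $\hat\xi_n$ is of the same order as the reciprocal of the bulk edge, hence $\Theta_{\P}(1)$), and the truncation~\eqref{def_lambda_hat} enforces $\tilde\lambda_j\le 1/[\hat\xi_n(1+\ve)]$ for every $j$, which means $\lambda\hat\xi_n\le 1/(1+\ve)<1$ $\tilde H_n$-almost surely; therefore the integrand $\bigl(\lambda\hat\xi_n/(1-\lambda\hat\xi_n)\bigr)^3$ is bounded by the finite constant $(1/\ve)^3$, and $\hat\sigma_n^3\le \hat\xi_n^{-3}(1+y_n\ve^{-3})=\mathcal{O}_{\P}(1)$. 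Note this bound uses \emph{only} the truncation and the lower bound on $\hat\xi_n$, and in particular does not require $\mathsf{H}_{0,n}$; so Proposition~\ref{thm_consistency_xi} is not needed here, only the elementary boundedness argument for $\hat\xi_n$ (which in turn rests on \ref{ass_lsd} and the almost-sure convergence of $\lambda_1(\hat\bfSigma)$ to a finite limit). Putting the pieces together: on the intersection of the event $\{\lambda_1(\hat\bfSigma)-\lambda_2(\hat\bfSigma)>c\}$ and $\{\hat\sigma_n\le M\}$, which has probability tending to $1$, we have $T_n\ge n^{2/3}c/M\to\infty$, so $\P(T_n>q_{1-\alpha})\to 1$.

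The main obstacle is the first step: one must invoke, with the correct hypotheses, the known first-order asymptotics of the top sample eigenvalues in the \emph{general} (non-white, multi-spike) population model, and verify that $\mathsf{H}_{1,n}(K)$ supplies exactly what those theorems need. The ordering assumption $\lambda_1(\bfSigma)>\dots>\lambda_K(\bfSigma)$ rules out degenerate spikes and gives distinct limits $\psi(\lambda_j(\bfSigma))$; Proposition~\ref{prop_sub_cond} supplies $\psi'(\lambda_j(\bfSigma))>0$, which is the supercriticality condition those results require; and the second part of $\mathsf{H}_{1,n}(K)$, $\lambda_{K+1}(\bfSigma)\le 1/[(1+\ve)\xi_{n,K}]$, ensures $\lambda_{K+1}(\hat\bfSigma)$ stays at the bulk edge so that there is no $(K+1)$-st spike competing with $\lambda_K(\hat\bfSigma)$. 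I would also need a brief argument that the QuEST-based truncation does not inflate $\hat\sigma_n$ under the alternative --- but since the only property used is $\tilde\lambda_j\hat\xi_n\le 1/(1+\ve)$, which holds by the very definition~\eqref{def_lambda_hat}, this is immediate. The remaining details (Slutsky, the fixedness of $q_{1-\alpha}$) are routine.
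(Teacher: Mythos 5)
Your proposal is correct and its skeleton matches the paper's proof: both arguments reduce to (i) the numerator $\lambda_1(\hat\bfSigma)-\lambda_2(\hat\bfSigma)$ is bounded below by a positive constant in probability, and (ii) $\hat\sigma_n=\mathcal{O}_{\PR}(1)$, both deduced from Proposition~\ref{prop_sub_cond} together with first-order asymptotics for supercritical sample eigenvalues, with the $K=1$ case handled separately via the sticking of $\lambda_2(\hat\bfSigma)$ to $r_{n,2}$. Where the paper differs is in the level of precision: it writes $T_n$ as a sum of an $\mathcal{O}_{\PR}(n^{1/6})$ fluctuation term and an $\asymp n^{2/3}$ drift term using the CLT machinery of Proposition~\ref{thm_clt_sup} (marginal CLT and gap CLT $D_{n,j}\cond\mathcal{N}(0,1)$) plus Lemma~\ref{thm_alt_sub}, whereas you invoke only first-order convergence $\lambda_j(\hat\bfSigma)\xrightarrow{\PR}\psi(\lambda_j(\bfSigma))$; for Theorem~\ref{thm_alternative} alone your coarser route is adequate (the paper carries the CLT along because it reuses it immediately in Lemma~\ref{lem_magnitude_T_nk} and Corollary~\ref{thm_est_spike}). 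Two points you flag but do not fully discharge, which the paper does explicitly: the strict inequality $\psi_n(\lambda_K(\bfSigma))-r_{n,K+1}\gtrsim 1$ (equation~\eqref{eq_gap_psi_r}), which is what makes ``$\psi(\lambda_1(\bfSigma))-\lim r_n$'' a \emph{positive} constant in your $K=1$ branch; and the justification that $1/\hat\xi_n=\mathcal{O}_{\PR}(1)$, where your phrase ``same order as the reciprocal of the bulk edge'' is misleading since under $\mathsf{H}_{1,n}(K)$ the point $\lambda_1(\hat\bfSigma)$ at which $\tilde\su_n$ is evaluated is bounded away from the bulk edge --- the correct reason is simply that $\lambda_1(\hat\bfSigma)$ converges to the finite constant $\psi(\lambda_1(\bfSigma))$, which the paper formalizes via $\hat\xi_n=-\su^0(\psi(\lambda_1(\bfSigma)))+o_{\PR}(1)$. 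Neither of these is a hard gap, but both need to be written out for a complete proof.
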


If the null hypothesis in \eqref{hypothesis0} is rejected, it is natural to ask for the number of supercritical eigenvalues in the underlying model. Indeed, this quantity is of fundamental interest in the literature on high-dimensional sample covariance matrices~\cite[e.g.][]{onatski2009testing, passemier2012determining, ding2021spiked, ke2023estimation, cai_han_pan}. 
To illustrate  the versatility of our testing approach, we will show how it can be adapted to this task. Specifically, if $\mathsf{H}_{1,n}(K)$ holds for some unknown value of $K\geq 1$, then a consistent estimate of $K$ can be constructed as follows.

For each $j=1,\dots,p-1$, define the $j$th gap statistic 
\begin{align*}
     \frac{n^{2/3}}{\hat\sigma_n} 
    \lb \lambda_j (\hat \bfSigma) - \lambda_{j+1} (\hat \bfSigma) \rb,
\end{align*}
which specializes to $T_n$ when $j=1$. The estimate of $K$ is then defined by
  \begin{align*}
     \hat K_n = 
        \min \{ k\in\N : T_{n,k} < t_n \} - 1, 
  \end{align*}
  where $t_n$ is a suitably chosen threshold.  The next result shows that any choice of $t_n$ satisfying $t_n \asymp n^{\nu}$ with $ 0 < \nu < 2/3$ leads to a consistent estimate of $K$.
\begin{corollary}
    \label{thm_est_spike}
    If the conditions of Theorem~\ref{thm_alternative} hold, and if $t_n \asymp n^{\nu}$ with $ 0 < \nu < 2/3$, then 
        \begin{align*}
        \lim_{n\to\infty} \PR ( \hat K_n = K) = 1 . 
        \end{align*}
\end{corollary}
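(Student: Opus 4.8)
The plan is to show that under $\mathsf{H}_{1,n}(K)$, the first $K$ normalized gap statistics $T_{n,1},\dots,T_{n,K}$ all diverge at rate $n^{2/3}$, while $T_{n,K+1}$ stays bounded (in fact converges in distribution to a Tracy-Widom difference). Once this dichotomy is established, any threshold $t_n$ with $t_n\asymp n^{\nu}$, $0<\nu<2/3$, separates the two groups with probability tending to $1$, which immediately forces $\hat K_n=K$ asymptotically. First I would record the relevant eigenvalue asymptotics: by Proposition~\ref{prop_sub_cond}, $\mathsf{H}_{1,n}(K)$ with fixed distinct $\lambda_1(\bfSigma)>\dots>\lambda_K(\bfSigma)$ implies $\psi'(\lambda_j(\bfSigma))>0$ for $j\le K$, so the classical spiked-model results (e.g. \cite{paul2007asymptotics, bai2008central, zhang2022asymptotic, knowles2017anisotropic}) give $\lambda_j(\hat\bfSigma)\xrightarrow{\P}\psi(\lambda_j(\bfSigma))$ for $j=1,\dots,K$, with these $K$ limits being distinct and all strictly larger than the right bulk edge $r_n$ (whose limit is $\lim_n r_n$, the right edge of $\underline F^{y,H}$). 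Meanwhile $\lambda_{K+1}(\hat\bfSigma)$ and $\lambda_{K+2}(\hat\bfSigma)$ both converge to $\lim_n r_n$, and the refined fluctuation result quoted after \eqref{def_sigma} gives $\frac{n^{2/3}}{\sigma_n}(\lambda_{K+1}(\hat\bfSigma)-r_n,\lambda_{K+2}(\hat\bfSigma)-r_n)$ converging to a $2$-dimensional Tracy-Widom law — here one uses that, under the assumptions with $\lambda_1(\bfSigma),\dots,\lambda_K(\bfSigma)$ fixed supercritical, the remaining eigenvalues behave exactly as in the subcritical regime after discarding the top $K$, so the edge universality applies to $\lambda_{K+1}(\hat\bfSigma),\lambda_{K+2}(\hat\bfSigma),\dots$.

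Next I would handle the scale estimate $\hat\sigma_n$. This is the step that needs the most care, because Proposition~\ref{thm_consistency_xi} establishes $\hat\sigma_n/\sigma_n\xrightarrow{\P}1$ only under $\mathsf{H}_{0,n}$, not under $\mathsf{H}_{1,n}(K)$. Under the alternative, the truncation in \eqref{def_lambda_hat} at level $1/(\hat\xi_n(1+\ve))$ caps the top QuEST quantiles, so $\tilde H_n$ still has a deterministic limit supported in a fixed compact subset of $(0,\infty)$, and $\hat\xi_n$ (built from $\lambda_2(\hat\bfSigma),\dots,\lambda_n(\hat\bfSigma)$ of the companion matrix, i.e. excluding only the largest) remains bounded away from $0$ and $\infty$; hence $\hat\sigma_n$ is $O_{\P}(1)$ and bounded below in probability by a positive constant. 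That two-sided boundedness of $\hat\sigma_n$ is all that is needed: for $j\le K$, $n^{2/3}(\lambda_j(\hat\bfSigma)-\lambda_{j+1}(\hat\bfSigma))\asymp n^{2/3}$ in probability because the gap converges to a positive constant (using distinctness of the $\psi(\lambda_j(\bfSigma))$ and, for $j=K$, that $\psi(\lambda_K(\bfSigma))>\lim_n r_n=\lim_n\lambda_{K+1}(\hat\bfSigma)$), so $T_{n,j}\xrightarrow{\P}\infty$; and for $j=K+1$, $T_{n,K+1}=\frac{n^{2/3}}{\hat\sigma_n}(\lambda_{K+1}(\hat\bfSigma)-\lambda_{K+2}(\hat\bfSigma))$ is $O_{\P}(1)$ since the numerator (after dividing by $\sigma_n$) converges to a difference of Tracy-Widom variables and $\sigma_n/\hat\sigma_n=O_{\P}(1)$.

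Finally I would assemble the conclusion. Fix $\nu\in(0,2/3)$ and $t_n\asymp n^\nu$. On the event $\{T_{n,j}\ge t_n \text{ for all } j\le K\}\cap\{T_{n,K+1}<t_n\}$ we have exactly $\hat K_n=K$. The first event has probability $\to1$ because each $T_{n,j}/n^{2/3}$ is bounded below in probability by a positive constant while $t_n/n^{2/3}\to0$; the second has probability $\to1$ because $T_{n,K+1}=O_{\P}(1)$ while $t_n\to\infty$. Since $\hat K_n$ is defined via the \emph{first} index where the gap statistic drops below $t_n$, no contribution from indices $j>K+1$ is needed. The main obstacle is the bookkeeping in the preceding paragraph: confirming that the edge-universality / Tracy-Widom limit for $(\lambda_{K+1}(\hat\bfSigma),\lambda_{K+2}(\hat\bfSigma))$ genuinely applies when $K$ fixed supercritical spikes are present (so that the "effective" subcritical model obtained by deleting the top $K$ population eigenvalues still satisfies the hypotheses of the fundamental result cited after \eqref{def_sigma}), and verifying that $\hat\sigma_n$ remains bounded above and below in probability under the alternative despite Proposition~\ref{thm_consistency_xi} being stated only under the null. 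Both points follow from the structure already present in the proofs of Theorem~\ref{thm_alternative} and Proposition~\ref{thm_consistency_xi}, which I would cite rather than redo.
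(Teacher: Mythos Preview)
Your proposal is correct and follows the same overall architecture as the paper: show $T_{n,j}$ diverges like $n^{2/3}$ for $j\le K$ (this is the paper's Lemma~\ref{lem_magnitude_T_nk}, proved via the same representations you sketch), show $T_{n,K+1}$ is eventually below $t_n$, and combine. The two-sided boundedness of $\hat\sigma_n$ under the alternative is handled exactly as you suggest---the paper pulls $\hat\sigma_n=\mathcal{O}_{\PR}(1)$ from \eqref{sigma_hat_bound} and remarks that the same argument gives $1=\mathcal{O}_{\PR}(\hat\sigma_n)$.

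The one substantive difference is in how $T_{n,K+1}$ is controlled. You aim for $T_{n,K+1}=O_{\PR}(1)$ by invoking a genuine two-dimensional Tracy--Widom limit for $(\lambda_{K+1}(\hat\bfSigma),\lambda_{K+2}(\hat\bfSigma))$ in the presence of $K$ supercritical spikes, and you correctly flag this as the main technical obstacle. The paper sidesteps this obstacle entirely: it does \emph{not} establish a Tracy--Widom limit for the post-spike eigenvalues, but instead uses only the weaker rigidity estimate of Lemma~\ref{thm_alt_sub} (from \cite{ding2021spiked}), namely $n^{2/3-\eta}(\lambda_j(\hat\bfSigma)-r_{n,K+1})=o_{\PR}(1)$ for any fixed $\eta>0$ and $j>K$. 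This yields only $T_{n,K+1}=o_{\PR}(n^{\eta})$ rather than $O_{\PR}(1)$, but since $t_n\asymp n^{\nu}$ with $\nu>0$, taking $\eta<\nu$ is already enough to force $\PR(T_{n,K+1}<t_n)\to1$. So the paper's route is more economical---it avoids the edge-universality-under-spikes bookkeeping you identify as the hard part---at the cost of a slightly weaker intermediate conclusion that is nonetheless sufficient. (Note also that the relevant edge under the alternative is $r_{n,K+1}$, not $r_n$; your heuristic of ``discarding the top $K$'' is exactly what produces this shift.)
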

\noindent This result is proven in Section \ref{sec_proof_thm_alternative}. 

\section{A consistent bootstrap for the subcritical regime} \label{sec_stat_appl}
In this section, we introduce a new bootstrap method for approximating the distributions of functionals of leading sample eigenvalues in the subcritical regime. That is, the statistics of interest $\hat\theta$ have the form
$$\hat\theta=\varphi(\lambda_1(\hat\bfSigma),\dots,\lambda_k(\hat\bfSigma)),$$
where $\varphi:\R^k\to\R^m$ is a generic function, and $k,m\geq 1$ are fixed integers. As has been highlighted elsewhere in the literature~\citep{Karoui:2019}, the non-parametric bootstrap based on sampling with replacement often fails to approximate the distributions of such statistics in the subcritical regime. The bootstrap method proposed here provides a solution to this problem.

At a conceptual level, the method is based on three ingredients. The first ingredient is the test from Section~\ref{sec_method}, which guides the user to determine whether or not a bootstrap for the subcritical regime is appropriate. The second ingredient is a fundamental universality result, which ensures that within the subcritical regime, the sample eigenvalues $\lambda_1(\hat\bfSigma),\dots,\lambda_k(\hat\bfSigma)$ behave asymptotically as if the data were generated from the Gaussian distribution $\mathcal{N}(\mathbf{0},\mathbf{\Lambda})$, where $\bold{\Lambda}=\textup{diag}(\lambda_1(\bfSigma),\dots,\lambda_p(\bfSigma))$~\citep{knowles2017anisotropic,Wen:2022}. The third ingredient is the matrix of truncated eigenvalue estimates~$\tilde{\mathbf{\Lambda}}=\textup{diag}(\tilde\lambda_1,\dots,\tilde\lambda_p)$ defined in~\eqref{def_lambda_hat}, which is specially designed to conform with an approximate version of the subcritical condition. Altogether, the method generates bootstrap data from the fitted Gaussian distribution $\mathcal{N}(\mathbf{0},\tilde{\mathbf{\Lambda}})$, as shown in the following algorithm.

\begin{algorithm}[H]
\caption{Bootstrap method for the subcritical regime}
\begin{algorithmic}
\normalsize
\Require the matrix $\tilde{\mathbf{\Lambda}}$.
\For{$b=1$ to $B$}
    \State Generate independent random vectors $\mathbf{y}_1^{\star}, \ldots, \mathbf{y}_n^{\star}$  from  $\mathcal{N}(\mathbf{0}, \tilde{\mathbf{\Lambda}})$.
    \State Compute the sample covariance matrix
    $\bfSigmahat^\star  = \frac{1}{n} \sum\limits_{i=1}^n  \mathbf{y}_i^{\star}(\mathbf{y}_i^{\star})\ttop $.
    \State Compute the statistic $\hat\theta^{\star}=\varphi(\lambda_1(\bfSigmahat^\star),\dots,\lambda_k(\bfSigmahat^\star))$.
    \EndFor
\State \textbf{Output:} Empirical distribution of $\hat\theta_1^{\star},\dots,\hat\theta_B^{\star}$.
\end{algorithmic}
\label{alg_bootstrap}
\end{algorithm}
\noindent While the outward appearance of the algorithm is straightforward, this appearance conceals two major challenges that the method overcomes. These are determining whether or not the subcritical regime holds, and estimating $\lambda_1(\bfSigma),\dots,\lambda_p(\bfSigma)$ in a way that respects the subcritical regime. So, in essence, the simple form of Algorithm~\ref{alg_bootstrap} is made possible by the insights that are embedded in the design of the test $T_n$ and the eigenvalue estimates $\tilde{\boldsymbol{\Lambda}}$.\\

\noindent \emph{Consistency.} To formulate a consistency guarantee for the proposed bootstrap, let $d$ denote any metric on the space of probability distributions on $\R^m$ whose topology coincides with that of weak convergence.  For example, it suffices to take $d$ to be the L\'evy-Prohorov metric. Next, let $\tilde\xi_{n,0}\in (0,1/\tilde\lambda_1)$ be the unique value that solves the equation
\begin{align}\label{eqn:tildexidef}
     \frac{1}{p} \sum_{j=1}^p  \bigg(\frac{\tilde\lambda_j \tilde\xi_{n,0} }{  1 - \tilde \lambda_j \tilde\xi_{n,0} } \bigg)^2 = \frac{n}{p} .
\end{align}
Also, define 
\begin{align}
    \tilde r_n  & 
=
    \frac{1}{\tilde\xi_{n,0}} \bigg( 1 + y_n \int  \frac{\lambda \tilde\xi_{n,0}}{1 - \lambda\tilde\xi_{n,0}} d\tilde H_n(\lambda) \bigg),
    \label{def_hat_r_Q} \\ 
      \tilde\sigma_n^3 & = \frac{1}{ \tilde\xi_{n,0}^3} \bigg( 1 + y_n \int \bigg( \frac{\lambda \tilde\xi_{n,0}}{1 - \lambda\tilde\xi_{n,0}}\bigg)^3 d\tilde H_n(\lambda) \bigg).
      \label{def_hat_sigma_Q}
\end{align}
These quantities only serve a formal purpose, which is to normalize $\lambda_1(\hat \bfSigma^{\star}),\dots, \lambda_k(\hat \bfSigma^{\star})$ so that their fluctuations can be theoretically compared with those of $\lambda_1(\hat \bfSigma),\dots, \lambda_k(\hat \bfSigma)$. In detail, for any fixed integers $k,m\geq 1$ and continuous function $g:\R^k\to\R^m$, the comparison will be made between the random vectors 
\begin{align*}
V_n &=g\Big(\ts\frac{n^{2/3}}{\sigma_n}(\lambda_1(\hat \bfSigma)-r_n),\dots,\ts\frac{n^{2/3}}{\sigma_n}(\lambda_k(\hat\bfSigma)-r_n)\Big)\\
    V_{n}^{\star} &= g\Big(\ts\frac{n^{2/3}}{\tilde\sigma}(\lambda_1(\hat \bfSigma^{\star})-\tilde r_n),\dots,\ts\frac{n^{2/3}}{\tilde \sigma}(\lambda_k(\hat\bfSigma^{\star})-\tilde r_n)\Big).
\end{align*}
Lastly, in order to refer to the conditional distribution of $V_n^{\star}$ given the original matrix of observations $\mathbf{Y}$, we write $\mathcal{L} \lb  V_{n}^\star | \mathbf{Y} \rb.$
With this notation in hand, we are now in position to state the following bootstrap consistency result.

\begin{theorem}
    \label{thm_bootstrap_cont_map}
      If $\mathsf{H}_{0,n}$ holds for all large $n$ and \ref{ass_mp_regime}-\ref{ass_lsd} are satisfied, then as $n\to\infty$
    \begin{align*}
     d\!\lb \mathcal{L} \lb  V_{n}^\star | \mathbf{Y} \rb , \mathcal{L} \lb V_n \rb  \rb 
        \xrightarrow{\PR} 0.
    \end{align*}
\end{theorem}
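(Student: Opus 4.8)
The plan is to show that both $\mathcal{L}(V_n)$ and the random conditional law $\mathcal{L}\lb V_n^\star\mid\bfY\rb$ converge to the same deterministic limit $\mu_V:=g_\ast\mu$, where $\mu$ is the $k$-dimensional Tracy--Widom law and $g_\ast\mu$ its pushforward under the continuous map $g$. Since $d$ metrizes weak convergence, the conclusion then follows from the triangle inequality
\[
 d\lb \mathcal{L}\lb V_n^\star\mid\bfY\rb,\,\mathcal{L}(V_n)\rb \;\le\; d\lb \mathcal{L}\lb V_n^\star\mid\bfY\rb,\,\mu_V\rb + d\lb \mu_V,\,\mathcal{L}(V_n)\rb ,
\]
once the first term on the right tends to $0$ in probability and the second deterministically. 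For the second term, since $\mathsf{H}_{0,n}$ holds for all large $n$, the usual subcritical condition~\eqref{eqn:usualdef} holds as well (Lemma~\ref{lem_xin_xi1}, Proposition~\ref{thm_tw_law}), so the Tracy--Widom limit for the leading sample eigenvalues recalled after~\eqref{def_sigma} (see also~\eqref{eqn:jointTWlimit}) applies with $d=k$, giving $\tfrac{n^{2/3}}{\sigma_n}\big(\lambda_j(\bfSigmahat)-r_n\big)_{1\le j\le k}\cond\mu$; the continuous mapping theorem then yields $V_n\cond\mu_V$, i.e.\ $d(\mathcal{L}(V_n),\mu_V)\to0$.

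The main work is the first term, $d\lb \mathcal{L}\lb V_n^\star\mid\bfY\rb,\mu_V\rb\xrightarrow{\P}0$. Conditionally on $\bfY$, the bootstrap sample is $\bfX^\star\tilde{\mathbf{\Lambda}}^{1/2}$ with $\bfX^\star$ a matrix of i.i.d.\ $\mathcal{N}(0,1)$ entries, so assumption~\ref{ass_mom} holds automatically and~\ref{ass_mp_regime} is inherited since $p/n$ is unchanged. Moreover, inspecting~\eqref{def_hat_r_Q},~\eqref{def_hat_sigma_Q} and~\eqref{eqn:tildexidef}, the quantities $\tilde r_n$, $\tilde\sigma_n$, $\tilde\xi_{n,0}$ are exactly $r_n$, $\sigma_n$, $\xi_{n,0}$ of~\eqref{eqn:rndef},~\eqref{def_sigma},~\eqref{eq_def_xi} (with $k=0$) evaluated at the pair $(\tilde H_n,y_n)$, i.e.\ they are the population parameters of the bootstrap world. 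Hence it remains to check that, with probability tending to $1$: (i) the support of $\tilde H_n$ is contained in a fixed compact subset of $(0,\infty)$ for all large $n$ and $\tilde H_n$ has a weak limit; and (ii) the bootstrap world is subcritical, i.e.\ $\limsup_n \lambda_1(\tilde{\mathbf{\Lambda}})\,\tilde\xi_{n,0}<1$. For (i), the upper endpoint of $\supp(\tilde H_n)$ is at most the truncation level $\tfrac{1}{(1+\ve)\hat\xi_n}$ from~\eqref{def_lambda_hat}, which is bounded; a lower bound bounded away from $0$ follows from the consistency of the QuEST quantiles together with~\ref{ass_lsd}; and $\tilde H_n\cond H$ in probability is Proposition~\ref{thm_consistency_xi}\eqref{eqn:hatHthm}. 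For (ii), the truncation in~\eqref{def_lambda_hat} gives $\lambda_1(\tilde{\mathbf{\Lambda}})=\max_{1\le j\le p}\tilde\lambda_j\le \tfrac{1}{(1+\ve)\hat\xi_n}$, whence $\lambda_1(\tilde{\mathbf{\Lambda}})\,\tilde\xi_{n,0}\le \tfrac{1}{1+\ve}\cdot\tfrac{\tilde\xi_{n,0}}{\hat\xi_n}$. Comparing the defining equations~\eqref{eqn:tildexidef} and~\eqref{eq_def_xi} (with $k=0$), using that both $\tilde H_n$ and $H_n$ converge weakly to $H$ while $\xi\mapsto\int\big(\tfrac{\lambda\xi}{1-\lambda\xi}\big)^2dG(\lambda)$ is continuous and strictly increasing for $G\in\{\tilde H_n,H_n\}$ on the relevant interval, a standard continuity argument (as in the proof of Proposition~\ref{thm_consistency_xi}) yields $\tilde\xi_{n,0}-\xi_{n,0}\xrightarrow{\P}0$; since also $\hat\xi_n-\xi_{n,0}\xrightarrow{\P}0$ by Proposition~\ref{thm_consistency_xi}\eqref{eqn:hatxithm} and $\xi_{n,0}$ is bounded away from $0$, it follows that $\tilde\xi_{n,0}/\hat\xi_n\xrightarrow{\P}1$, hence $\lambda_1(\tilde{\mathbf{\Lambda}})\,\tilde\xi_{n,0}\le \tfrac{1}{1+\ve}+\op$. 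This is precisely where the separation parameter $\ve$ is crucial, since it produces the strict gap below $1$.

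Because (i)--(ii) and the limits above hold only in probability (the objects $\tilde H_n,\hat\xi_n,\tilde\xi_{n,0}$ being random), I would conclude via the subsequence principle: to prove $d\lb\mathcal{L}\lb V_n^\star\mid\bfY\rb,\mu_V\rb\xrightarrow{\P}0$ it suffices to show that every subsequence admits a further subsequence $(n'')$ along which this distance tends to $0$ almost surely. Passing to such a subsequence along which the almost-sure versions of Proposition~\ref{thm_consistency_xi} and of (i)--(ii) hold, the bootstrap world becomes, almost surely, a deterministic sequence of models satisfying~\ref{ass_mp_regime}--\ref{ass_lsd} together with~\eqref{eqn:usualdef}; applying the Tracy--Widom result (with $d=k$) gives $\tfrac{(n'')^{2/3}}{\tilde\sigma_{n''}}\big(\lambda_j(\bfSigmahat^\star)-\tilde r_{n''}\big)_{1\le j\le k}\,\big|\,\bfY\cond\mu$ almost surely, and the continuous mapping theorem with $g$ yields $\mathcal{L}\lb V_{n''}^\star\mid\bfY\rb\to\mu_V$ almost surely, as required. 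Combining this with $\mathcal{L}(V_n)\to\mu_V$ in the triangle inequality completes the argument. I expect the main obstacle to be the verification of (i)--(ii) — in particular, bounding $\supp(\tilde H_n)$ away from $0$ through the QuEST construction and establishing $\tilde\xi_{n,0}/\hat\xi_n\to1$ — since the substantive probabilistic content (consistency of $\hat\xi_n$, $\tilde H_n$ and $\hat\sigma_n$) is already furnished by Proposition~\ref{thm_consistency_xi}, and what remains is mostly the bookkeeping needed to apply the Tracy--Widom theorem conditionally.
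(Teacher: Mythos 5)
Your overall strategy matches the paper's: apply the Tracy--Widom result \cite[Corollary 3.19]{knowles2017anisotropic} both to the real data and, via the subsequence principle, conditionally in the bootstrap world, with the triangle inequality routed through $\mu_V = g_\ast\mu$. The decomposition into (i) a spectral-distribution condition and (ii) a subcritical/edge-regularity condition, and the reduction of (ii) to the ratio $\tilde\xi_{n,0}/\hat\xi_n\to 1$ together with the always-true bound $\tilde\lambda_1\hat\xi_n\leq 1/(1+\varepsilon)$, are exactly the paper's steps. However, two places in your argument gloss over nontrivial work.

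First, you check condition~\ref{ass_lsd} in full in the bootstrap world, including a claim that $\supp(\tilde H_n)$ is bounded away from $0$ ``by consistency of the QuEST quantiles.'' This is not justified: weak convergence $\tilde H_n\cond H$ does not control where the left edge of $\supp(\tilde H_n)$ sits, and QuEST need not respect a deterministic lower bound on its estimated quantiles in finite samples. The paper sidesteps this by invoking \cite[Corollary 3.19]{knowles2017anisotropic} under the weakened condition \eqref{label:A3prime}, which only constrains $\supp(H)$ (the \emph{limit}) rather than $\supp(H_n)$ for each $n$; with that relaxation, the lower-bound issue never arises. If you insist on verifying \ref{ass_lsd} itself, you need a genuine argument, and none is available.

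Second, the claim $\tilde\xi_{n,0}-\xi_{n,0}=o_\PR(1)$ is not a ``standard continuity argument.'' The map $\xi\mapsto I(\xi,G):=\int\big(\lambda\xi/(1-\lambda\xi)\big)^2\,dG(\lambda)$ is continuous and increasing, but to pass from $I(\tilde\xi_{n,0},\tilde H_n)=I(\xi_{n,0},H_n)$ and $I(\xi_{n,0},\tilde H_n)-I(\xi_{n,0},H_n)=o_\PR(1)$ to $\tilde\xi_{n,0}-\xi_{n,0}=o_\PR(1)$, one needs uniform control of the modulus of continuity of the \emph{inverse} of $\xi\mapsto I(\xi,\tilde H_n)$ for the random, $n$-dependent measures $\tilde H_n$. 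This is precisely the content of the paper's Lemma~\ref{lem_subcritical}, whose proof establishes the quantitative lower bound
\begin{align*}
\Big|I(\tilde\xi_{n,0},\tilde H_n)-I(\xi_{n,0},\tilde H_n)\Big|\ \gtrsim\ \Big(\tfrac{\xi_{n,0}}{\tilde\xi_{n,0}}-1\Big)^2,
\end{align*}
on the event $\{\tilde\lambda_1\xi_{n,0}\leq 1/(1+\varepsilon/2)\}$. You should either cite this estimate or prove an equivalent lower bound; otherwise the step ``strictly increasing $\Rightarrow$ close preimages'' is a genuine gap. Finally, you do not explicitly verify the first edge-regularity inequality $\tilde r_n-\tilde l_n\geq\eta$; the paper handles it by noting that once~\eqref{def_regular_edge_boot2}, \ref{ass_mp_regime}, and \eqref{label:A3prime} hold along a subsequence, the argument behind \cite[(2.19)]{bao2013local} applies, and you should add the analogous remark.
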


From a practical standpoint, it should be emphasized that the bootstrapped eigenvalues $\lambda_1(\hat\bfSigma^{\star}),\dots,\lambda_k(\hat\bfSigma^{\star})$ do not need to be normalized as above in order to apply Algorithm~\ref{alg_bootstrap} to solve various inference tasks. For instance, consider the problem of estimating the bias $\E ( \lambda_1(\bfSigmahat)) -  \lambda_1(\bfSigma)$ of the largest sample eigenvalue. In this case, the bootstrap estimate is defined as $\E(\lambda_1(\bfSigmahat^\star) | \mathbf{Y}) - \tilde\lambda_{1}$, where the conditional expectation is numerically approximated by averaging the $B$ realizations of $\lambda_1(\hat\bfSigma^{\star})$ in Algorithm~\ref{alg_bootstrap}. In the experiments presented in Section \ref{sec_sim_bootstrap}, we show that the bootstrap reliably estimates the bias across a range of conditions.

\section{Numerical experiments}\label{sec_expt}
This section presents three collections of empirical results on the performance of the proposed methods. First, in Section~\ref{sec:expt:test} we use simulated data to compare the proposed statistic $T_n$ with Onatski's statistic $R_n(\kappa)$ in testing $\mathsf{H}_{0,n}$. This comparison shows that both statistics have a well-controlled level, while $T_n$ delivers substantially higher power. Second, in Section~\ref{sec_sim_bootstrap}, we look at the empirical performance of the proposed bootstrap, showing that it can accurately approximate the distributions of normalized versions of $\lambda_1(\hat\bfSigma)$ and $\lambda_1(\hat\bfSigma)-\lambda_2(\hat\bfSigma)$, and also accurately estimate the bias $\E(\lambda_1(\hat\bfSigma))-\lambda_1(\bfSigma)$. Lastly, in Section~\ref{sec_stock}, we apply $T_n$ and $R_n(\kappa)$ to test $\mathsf{H}_{0,n}$ using two high-dimensional datasets derived from the stock market returns of companies in the S\&P 500. In these real-data examples, $T_n$ produces smaller p-values, which aligns with the power advantage observed in the simulations from Section~\ref{sec:expt:test}. Throughout all of the experiments, we always put $\varepsilon=0.2$ in the definition of $\mathsf{H}_{0,n}$ and the alternatives.

\subsection{Testing the subcritical condition}\label{sec:expt:test} 
To generate data under the null $\mathsf{H}_{0,n}$ and the alternatives $\mathsf{H}_{1,n}(K)$ with $K\in\{1,2\}$, we used the model given in \ref{ass_mom}  with  $(n,p)=(600,400)$ and varying choices of the leading eigenvalues of $\bfSigma$.\\

\noindent\emph{Experiments with varying $\lambda_1(\bfSigma)$.} First, we describe a subset of simulations in which only $\lambda_1(\bfSigma)$ was varied over a grid of values in the interval $[1,3.5]$, while keeping $\lambda_2(\bfSigma),\dots,\lambda_p(\bfSigma)$ fixed. This was done using $\bfSigma=\textup{diag}(\lambda_1(\bfSigma),\dots,\lambda_p(\bfSigma))$ and choosing the eigenvalues $\lambda_2(\bfSigma),\dots,\lambda_p(\bfSigma)$ to have two types of structures, that  that we refer to as the `spiked spectrum' and `decaying spectrum'. In the case of the spiked spectrum, we put $\lambda_j(\bfSigma)=1$ for all $j=2,\dots,p$, while in the case of the decaying spectrum, we put $\lambda_j(\bfSigma)=1$ for $j\in\{2,\dots,151\}$ and $\lambda_j(\bfSigma)=j^{-c}$ for $j\in\{152,\dots,p\}$ with $c\in\{0.5,1\}$. For each setting of $\bfSigma$, we generated 800 realizations of the data matrix $\mathbf Y=\mathbf{X}\bfSigma^{1/2}$ with $\mathbf{X}\in\R^{n\times p}$ having i.i.d.~entries, and taking the random variable $x_{11}$ to follow either a $\mathcal{N}(0,1)$ distribution or a standardized $t_{10}$ distribution. Next, for each realization of $\mathbf{Y}$, we computed test statistics $T_n$ and $R_n(\kappa)$ with $\kappa\in\{1,10\}$, and recorded whether or not $\mathsf{H}_{0,n}$ was rejected at a nominal level of $\alpha=0.05$. We refer to the proportion of rejections occurring among these 800 trials as the rejection probability. Note also that the critical value for $R_n(\kappa)$ is given by the $(1-\alpha)$-quantile of the random variable $\max_{1\leq j\leq \kappa}\{(\zeta_j-\zeta_{j+1})/(\zeta_{j+1}-\zeta_{j+2})\}$, where $(\zeta_1,\dots,\zeta_{\kappa+2})$ follows a $(\kappa+2)$-dimensional Tracy-Widom distribution.

The results are displayed in Figure~\ref{fig1} by plotting the rejection probabilities of the tests as a function of $\lambda_1(\bfSigma)$. The four panels correspond to different model configurations that are indicated in the caption. As $\lambda_1(\bfSigma)$ increases from $1$ to 3.5 in each panel, there is a transition from $\mathsf{H}_{0,n}$ to $\mathsf{H}_{1,n}(1)$, and the range of values of $\lambda_1(\bfSigma)$ corresponding to each hypothesis are marked below the x-axis. Under the null hypothesis $\mathsf{H}_{0,n}$, all three statistics produce rejection probabilities that agree closely with the nominal level of $\alpha=0.05$. On the other hand, when $\mathsf{H}_{1,n}(1)$ holds, we see that $T_n$ has substantially higher power than $R_n(1)$ and $R_n(10)$.

There are a few more aspects of the results in Figure~\ref{fig1} to discuss. First, the ideal choice of $\kappa$ for detecting $\mathsf{H}_{1,n}(1)$ with the statistic $R_n(\kappa)$ is $\kappa=1$, and so it is notable that even when $R_n(\kappa)$ is favorably tuned, the statistic $T_n$ still provides higher power. Second, the loss of power suffered by $R_n(10)$ in comparison to $R_n(1)$ illustrates the practical drawback of using a conservatively large value of $\kappa$ that was explained in the introduction (page~\pageref{introdisc}). Third, the decaying spectrum violates the theoretical condition~\ref{ass_lsd}, and the standardized $t_{10}$ distribution has tails that are too heavy to satisfy the moment assumptions in~\ref{ass_mom}. Thus, the results in Figure~\ref{fig1} show that the statistic $T_n$ has a scope of application that extends beyond those conditions.
\begin{figure}[H]
  \centering
  \begin{minipage}{.47\textwidth}
    \centering
    \begin{overpic}[width=\textwidth]{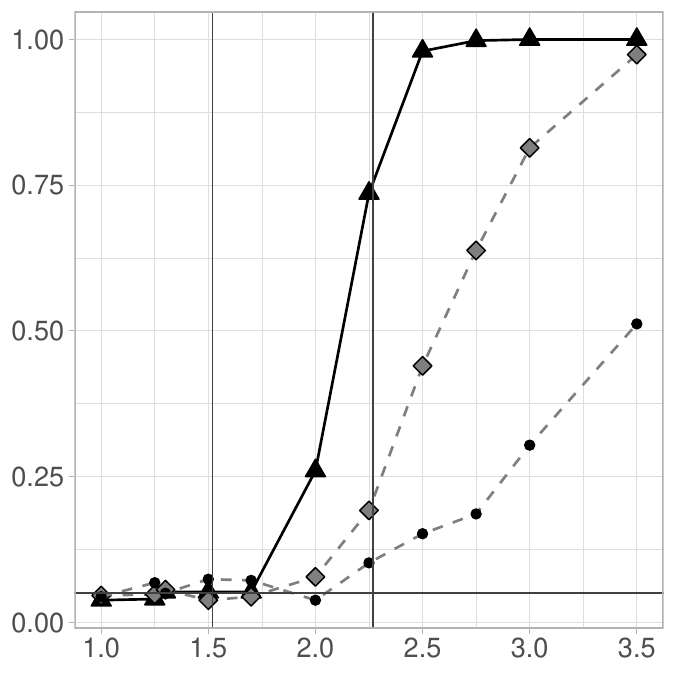}
\put(-6,35){\rotatebox{90}{rejection probability}}
\put(6,11.5){$\alpha$}
\put(9,-12){  $\leftarrow\!\!\mathsf{H}_{0,n}\!\!\rightarrow$  }
\put(53,-12){ $\xleftarrow{  \ \ \ \ \ \ }\mathsf{H}_{1,n}(1)\xrightarrow{ \ \ \ \ }$  }
\put(35,-4){ value of $\lambda_1(\bfSigma)$  }
	\end{overpic}
    ~\\
  \end{minipage}
  \hfill
  \begin{minipage}{.47\textwidth}
    \centering
    \begin{overpic}[width=\textwidth]{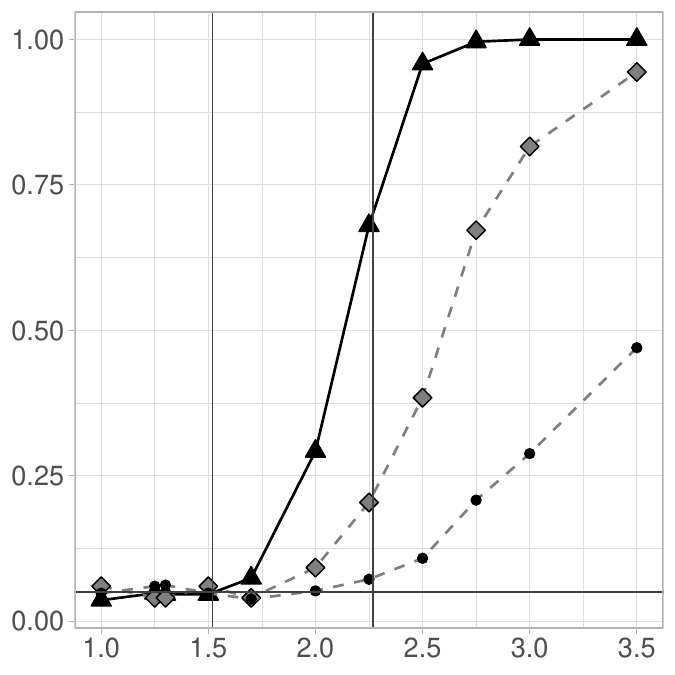}
\put(-6,35){\rotatebox{90}{rejection probability}}
\put(6,11.5){$\alpha$}
\put(9,-12){  $\leftarrow\!\!\mathsf{H}_{0,n}\!\!\rightarrow$  }
\put(53,-12){ $\xleftarrow{  \ \ \ \ \ \ }\mathsf{H}_{1,n}(1)\xrightarrow{ \ \ \ \ }$  }
\put(35,-4){ value of $\lambda_1(\bfSigma)$  }
    \end{overpic}
    ~\\
  \end{minipage} 
  \\[6ex]
\begin{minipage}{.47\textwidth}
    \centering
 \begin{overpic}[width=\textwidth]{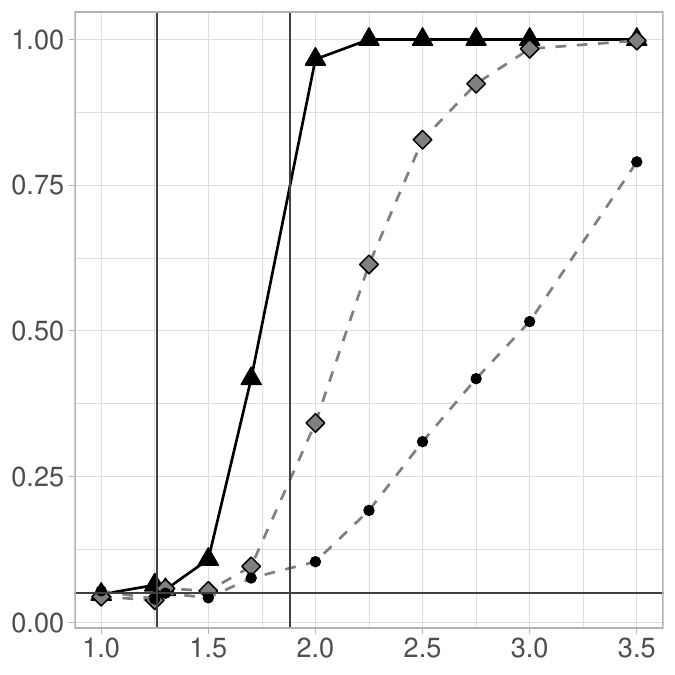}
 \put(-6,35){\rotatebox{90}{rejection probability}}
\put(6,11.5){$\alpha$}
\put(7,-12){  $\footnotesize\shortleftarrow\!\!\displaystyle\mathsf{H}_{0,n}\!\!\!\!\footnotesize\shortrightarrow$  }
\put(40,-12){ $\xleftarrow{ \ \  \ \ \ \ \ \  }\mathsf{H}_{1,n}(1)\xrightarrow{ \ \ \ \ \ \  \ \ \  }$  }
\put(35,-4){ value of $\lambda_1(\bfSigma)$  }
 \end{overpic}
\end{minipage}  
\hfill
 \begin{minipage}{.47\textwidth}
    \centering             \begin{overpic}[width=\textwidth]{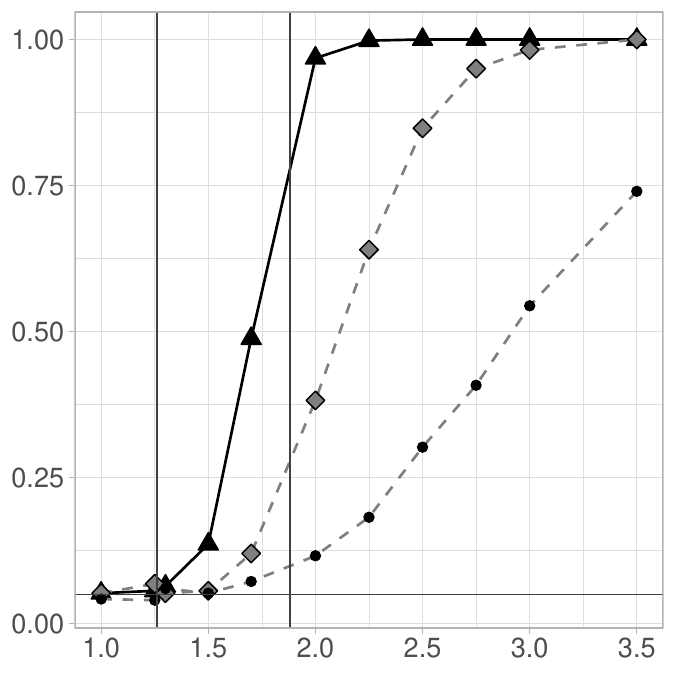}
   \put(-6,35){\rotatebox{90}{rejection probability}}
\put(6,11.5){$\alpha$}
\put(7,-12){  $\shortleftarrow\!\!\mathsf{H}_{0,n}\!\!\!\shortrightarrow$  }
\put(41,-12){ $\xleftarrow{ \ \  \ \ \ \ \ \  }\mathsf{H}_{1,n}(1)\xrightarrow{ \ \ \ \ \ \  \ \ \  }$  }
\put(35,-4){ value of $\lambda_1(\bfSigma)$  }
    \end{overpic}
    \end{minipage}  
    \\ [6ex]
  \caption{ Rejection probabilities for the tests  $T_n$ (triangle), $R_n(1)$ (diamond) and $R_n(10)$ (circle), plotted as a function of $\lambda_1(\bfSigma)$. In each panel, the arrows below the x-axis specify the values of $\lambda_1(\bfSigma)$ corresponding to $\mathsf{H}_{0,n}$ and $\mathsf{H}_{1,n}(1)$. The nominal level of $\alpha=0.05$ is marked with a horizontal line in each panel. First row: Spiked spectrum with  $x_{11} \sim \mathcal{N}(0,1)$ (left panel) and $x_{11} \sim t_{10}/\sqrt{\var(t_{10})}$ (right panel). Second row: Decaying spectrum with $x_{11}\sim\mathcal{N}(0,1)$, and decay parameter $c=1$ (left panel) and $c=0.5$ (right panel).  }
      \label{fig1}
\end{figure}

\noindent\emph{Experiments with varying $\lambda_1(\bfSigma),\lambda_2(\bfSigma),\lambda_3(\bfSigma)$.} As an extension of the previous simulations, we compared the statistics $T_n$, $R_n(1)$ and $R_n(10)$ under varying choices of the leading three  population eigenvalues when $\bfSigma=\textup{diag}(\lambda_1(\bfSigma),\lambda_2(\bfSigma),\lambda_3(\bfSigma),1,\dots,1)$ and $x_{11}\sim \mathcal{N}(0,1)$. The choices of the leading eigenvalues correspond to the hypotheses $\mathsf{H}_{0,n}$, $\mathsf{H}_{1,n}(1)$ and $\mathsf{H}_{1,n}(2)$, and are listed in Table~\ref{table_three_spikes}. Overall, the relative performance of the statistics is similar to the previous settings---insofar as their rejection probabilities are close to the nominal level of $\alpha=0.05$ under the null, and $T_n$ achieves greater power under the alternatives. 
A distinct phenomenon shown in the boxed entries of Table~\ref{table_three_spikes} is that \emph{$R_n(1)$ fails to achieve any power under $\mathsf{H}_{1,n}(2)$}, as anticipated by the discussion in the introduction (page~\pageref{introdisc}). In particular, this illustrates the sensitivity of the statistic $R_n(\kappa)$ to the choice of $\kappa$.

    \begin{table}[H]
    \centering 
\begin{tabular}{lclccc}
& & & \multicolumn{3}{c}{\underline{ \ \ \ rejection probabilities \ \ \ }}\\[0.1cm]
hypothesis & $K$ & $(\lambda_1(\bfSigma),\lambda_2(\bfSigma),\lambda_3(\bfSigma))$   &  $T_n$    & $R_n(1)$    & $R_n(10)$   \\ \hline 
$\mathsf{H}_{0,n}$ & 0 & $(1,1,1)$     
& 0.05 & 0.05 & 0.04 \\
$\mathsf{H}_{0,n}$ & 0 & $(1.25,1.25,1.25)$ 
& 0.04 & 0.04 & 0.07 \\
$\mathsf{H}_{0,n}$ & 0 & $(1.25,1.25,1)$ 
& 0.06 & 0.07 & 0.06 \\
$\mathsf{H}_{0,n}$ & 0 & $(1.3,1.3, 1.3)$   
& 0.05 & 0.06 & 0.06 \\
$\mathsf{H}_{0,n}$ & 0 & $(1.3,1.3,1)$ 
& 0.05 & 0.06 & 0.06 \\
$\mathsf{H}_{0,n}$ & 0 & $(1.5,1.5,1.3)$ 
& 0.05 & 0.04 & 0.05 \\
$\mathsf{H}_{0,n}$ & 0 & $(1.5,1.5, 1)$ 
& 0.06 & 0.05 & 0.05 \\
$\mathsf{H}_{1,n}(K)$ & 1 & $(2.5,1.5,1.3)$  
& 0.96 & 0.42 & 0.12 \\
$\mathsf{H}_{1,n}(K)$ & 1 & $(3, 1.5, 1.3)$  
& 1.00 & 0.79 & 0.28 \\
$\mathsf{H}_{1,n}(K)$ & 1 & $(3.5,1.5, 1.3)$  
& 1.00 & 0.97 & 0.46 \\
$\mathsf{H}_{1,n}(K)$ & 1 & $(4, 1.5, 1.3)$  
& 1.00 & 1.00 & 0.66 \\
$\mathsf{H}_{1,n}(K)$ & 1 & $(4.5, 1.5, 1.3)$ 
& 1.00 & 1.00 & 0.84 \\
$\mathsf{H}_{1,n}(K)$ & 1 & $(5, 1.5, 1.3)$  
& 1.00 & 1.00 & 0.93 \\
$\mathsf{H}_{1,n}(K)$ & 2 & $(4,3,1)$  
& 1.00 & \boxed{0.00} & 0.28 \\
$\mathsf{H}_{1,n}(K)$ & 2 & $(5,3,1)$ 
& 1.00 & \boxed{0.00} & 0.27 
\end{tabular}
\caption{
Rejection probabilities for the tests $T_n$, $R_n(1)$ and $R_n(10)$ under the null $\mathsf{H}_{0,n}$ and the alternatives $\mathsf{H}_{1,n}(K)$, $K\in\{1,2\}$ when $\bfSigma=\textup{diag}(\lambda_1(\bfSigma),\lambda_2(\bfSigma),\lambda_3(\bfSigma),1,\dots,1)$ and $x_{11}\sim \mathcal{N}(0,1)$. Recall that $K$ denotes the number of supercritical population eigenvalues, and the nominal level is $\alpha=0.05$. The boxed entries illustrate cases where $R_n(\kappa)$ has no power when $\kappa<K$.
}
\label{table_three_spikes}
\end{table}

\subsection{Bootstrap}  \label{sec_sim_bootstrap} 
This subsection illustrates the performance of the proposed bootstrap method in two tasks: distributional approximation and bias estimation.\\

\noindent \emph{Distributional approximation.} Here, we look at how well the bootstrap can approximate the distributions of the statistics
\begin{align}
    L_n &=\frac{n^{2/3}}{\sigma_n} \big( \lambda_1(\bfSigmahat) - r_n \big)\\
    G_n &= \frac{n^{2/3}}{\sigma_n} \big( \lambda_1(\bfSigmahat) - \lambda_2(\bfSigmahat) \big),
\end{align}
with respect to their means, standard deviations, and 0.95-quantiles.

Synthetic $n\times p$ data matrices of the form $\mathbf{Y}=\mathbf{X}\bfSigma^{1/2}$ were generated with $(n,p)=(500,300)$, and $\mathbf{X}$ having i.i.d.~entries $x_{11}\sim \mathcal{N}(0,1)$ or $x_{11}\sim t_{10}/\sqrt{\var(t_{10})}$. The matrix $\bfSigma$ was chosen by considering a spectral decomposition $\bfSigma=\mathbf{Q}\mathbf{\Lambda}\mathbf{Q}^{\top}$, where $\mathbf{Q}$ was drawn uniformly at random from the set of $p\times p$ orthogonal matrices, and the matrix of eigenvalues $\mathbf{\Lambda}$ was taken to have the following three forms
\begin{enumerate}[label=(\alph*)]
    \item \label{sigma_identity}
     $\mathbf{\Lambda} = \bfI$,
    \item \label{sigma_two_spikes} 
    $\mathbf{\Lambda} = \operatorname{diag}(1.4,1.2,1, \ldots, 1),$
     \item \label{sigma_five_spikes} 
    $\mathbf{\Lambda} = \operatorname{diag}(1.3, 1.3, 1.3, 1.3, 1.3 ,1, \ldots, 1)$.
\end{enumerate}
For each choice of $\bfSigma$ and the distribution of $x_{11}$, we generated 30000 realizations of $\mathbf{Y}$, and computed the corresponding values of $L_n$ and $G_n$. The empirical means, standard deviations, and 0.95-quantiles associated with these samples were then treated as ground truth for the distributions of $L_n$ and $G_n$. For the first 600 realizations of $\mathbf{Y}$, we applied Algorithm~\ref{alg_bootstrap} to generate $B=500$ bootstrap samples of the form $L_n^{\star}= \frac{n^{2/3}}{\tilde\sigma_n} \big( \lambda_1 (\bfSigmahat^\star) - \tilde r_n \big)$ 
and $G_n^{\star}=\frac{n^{2/3}}{\tilde\sigma_n} \big( \lambda_1 (\bfSigmahat^\star) - \lambda_2 (\bfSigmahat^\star) \big)$. The empirical mean, empirical standard deviation, and empirical 0.95-quantile of each set of 500 bootstrap samples were then treated as bootstrap estimates of the corresponding quantities for $L_n$ and $G_n$. So altogether, this produced 600 realizations of each type of bootstrap estimate. 
\begin{table}[H]
    \centering
    \begin{tabular}{llllc}
         \multicolumn{5}{c}{
         }  \\[0.2cm]
       $x_{11}$  & $\bfSigma$  & $\E(L_n)$ & $\sqrt{\textup{var}(L_n)}$ & $\P(L_n\leq \hat q_{0.95})$ \\[0.cm] 
       \hline\\[-0.3cm] 
       $\mathcal{N}(0,1)$ & \ref{sigma_identity}  & -1.26 & 1.23 & - \\
        & & -1.31 (0.06) & 1.25 (0.05) & 0.95 \\
         & \ref{sigma_two_spikes} & -1.29 & 1.24 & - \\ 
        & & -1.31 (0.06) & 1.25 (0.05) & 0.95 \\ 
           & \ref{sigma_five_spikes}  & -1.29 & 1.32 & - \\ 
        & & -1.31 (0.05) & 1.25 (0.05) & 0.93\\[0.cm] 
        \hline\\[-0.3cm]
        
       $t_{10}/\sqrt{\var}(t_{10})$ & \ref{sigma_identity}  & -1.12 & 1.31 & - \\
        & & -1.31 (0.06) & 1.24 (0.05) & 0.93 \\
        & \ref{sigma_two_spikes}  & 
       -1.08 & 1.31 & - \\
        & & -1.31 (0.06) & 1.24 (0.05) & 0.92 \\
        & \ref{sigma_five_spikes} & 
       -1.07 & 1.28 & - \\
       & & -1.32 (0.06) & 1.24 (0.06) & 0.93\\[0.2cm]
       
    \end{tabular}
    \caption{
    Performance of bootstrap estimates for the mean, standard deviation and 0.95-quantile of $L_n$.
    }
    \label{table1_bootstrap_ev}
\end{table}

The results for $L_n$ and $G_n$ are shown in Tables~\ref{table1_bootstrap_ev} and~\ref{table1_bootstrap_gap} respectively. In each table, there are two rows associated with each model configuration, with the upper row displaying the ground truth values, and the lower row summarizing the bootstrap estimates. Specifically, each lower row contains the means of the bootstrap estimates (with standard deviation in parentheses) for $\E(L_n)$, $\E(G_n)$, $\sqrt{\var(L_n)}$, and $\sqrt{\var(G_n)}$, as well as the coverage probabilities $\P(L_n\leq \hat q_{0.95})$ and $\P(G_n\leq \hat q_{0.95})$, where $\hat q_{0.95}$ denotes the corresponding bootstrap quantile estimate.
\begin{table}[H]
    \centering
    \begin{tabular}{llllc}
         \multicolumn{5}{c}{ 
 }  \\[0.2cm]
       $x_{11}$ & $\bfSigma$   & $\E(G_n)$ & $\sqrt{\textup{var}(G_n)}$ & $\P(G_n\leq \hat q_{0.95})$ \\[0.cm] 
       \hline\\[-0.3cm] 
       $\mathcal{N}(0,1)$ & \ref{sigma_identity}  & 2.03 & 1.10  & - \\
       & & 2.02 (0.06) & 1.11 (0.04) & 0.95 \\ 
       & \ref{sigma_two_spikes}  & 2.04 & 1.16  & - \\
       & & 2.02 (0.06) & 1.11 (0.04) & 0.94 \\
        & \ref{sigma_five_spikes}  & 2.01 & 1.07  & - \\
       & & 2.01 (0.08) & 1.10 (0.05) & 0.95 \\[0.cm]
        \hline\\[-0.3cm] 
       $t_{10}/\sqrt{\var}(t_{10})$ & \ref{sigma_identity}  & 2.05 & 1.08 & - \\
       & & 2.01 (0.07) & 1.10 (0.05) & 0.95 \\
       & \ref{sigma_two_spikes} & 
        2.02 & 1.11 & - \\
       & & 2.01 (0.08) & 1.10 (0.05) & 0.96 \\ 
        & \ref{sigma_five_spikes}  & 2.06 & 1.14 & - \\
       & & 2.01 (0.08) & 1.10 (0.05) &  0.94\\[0.2cm]  
    \end{tabular}
    \caption{  Performance of bootstrap estimates for the mean, standard deviation and 0.95-quantile of $G_n$.}
    \label{table1_bootstrap_gap}
\end{table}

\begin{table}[H]
    \centering
    \begin{tabular}{cll}{}
        $x_{11} $ & $(\lambda_1(\bfSigma),\lambda_2(\bfSigma))$ & $\mathbb{E} ( \lambda_1(\hat\bfSigma)) - \lambda_1(\bfSigma)  $  \\[0.1cm] 
        \hline\\[-0.3cm]
         $ \mathcal{N}(0,1) $ & $(1,1)$ & 3.34 \\
          & & 3.17 (0.19) \\
        & $(1.1,1)$ & 3.24 \\ 
        & & 3.18 (0.18)\\
        & $(1.2,1)$ & 3.14  \\
        & & 3.16 (0.20)\\
        & $(1.2,1.1)$  &  3.14   \\ 
        & & 3.17 (0.18)\\
        \hline\\[-0.3cm] 
        $t_{10}/ \sqrt{\operatorname{var}}(t_{10})$ &  $(1,1)$ & 3.35\\
        & & 3.10 (0.22)\\
       & $(1.1,1)$ & 3.25  \\ 
       & & 3.10 (0.23)\\
        & $(1.2,1)$ & 3.15   \\ 
             & & 3.10 (0.23)\\
         & $(1.2,1.1)$  &  3.15   \\ 
         & & 3.11 (0.22)
    \end{tabular}
\caption{ 
Performance of bootstrap estimate for the bias of $\lambda_1(\hat\bfSigma)$.}
\label{table_bootstrap_bias2}
\end{table}

Overall, the bootstrap estimates are accurate for all three estimation tasks. Indeed, in the estimation of $\E(L_n)$, $\E(G_n)$, $\sqrt{\var(L_n)}$, and $\sqrt{\var(G_n)}$, both the bias and standard deviation of the bootstrap estimates are a small percentage of the target parameter value. 
Furthermore, with regard to quantile estimation, the coverage probabilities are typically within 2\% of the desired nominal coverage of 95\%.

\noindent \emph{Bias estimation.} Due to the fact that the bias of $\lambda_1(\hat\bfSigma)$ is a well-known source of difficulty in high-dimensional inference, it is natural to ask if the bootstrap can be used to estimate this bias. That is, we seek to estimate the parameter $\E(\lambda_1(\hat\bfSigma))-\lambda_1(\bfSigma)$. In Table~\ref{table_bootstrap_bias2}, we present results for this task under conditions similar to those used earlier in this subsection. The only differences were that we put $(n,p)=(500,600)$ and took the matrix $\mathbf{\Lambda}$ to be of the form $\mathbf{\Lambda}=\textup{diag}(\lambda_1(\bfSigma),\lambda_2(\bfSigma),1,\ldots,1)$ using several choices of $(\lambda_1(\bfSigma),\lambda_2(\bfSigma))$ that are given in the second column of Table~\ref{table_bootstrap_bias2}. The results are presented in the same format as in Tables~\ref{table1_bootstrap_ev} and~\ref{table1_bootstrap_gap}, demonstrating that the errors of the bootstrap estimates are typically only a few percent of the true parameter value.

\subsection{Stock market data}\label{sec_stock}
In the context of finance, the largest eigenvalues of sample covariance matrices play a well-established role in capturing the primary sources of variance in asset returns~\citep{fabozzi2007robust,laloux2000random, ruppert2011statistics}.
Likewise, it is of fundamental interest for practitioners to determine appropriate distributional approximations for sample eigenvalues---which is a motivation for the problem of testing $\mathsf{H}_{0,n}$ versus $\mathsf{H}_{1,n}$.

As an illustration, we study this testing problem with two high-dimensional financial datasets derived from monthly and quarterly log returns of stocks in the S\&P 500 index. 
Specifically, we used the \texttt{quantmod} R package to  collect monthly log-returns over 53 months between 01/08/2010-01/09/2023, as well as quarterly log-returns over 93 quarters between 01/08/2000-01/09/2023 from Yahoo Finance. After excluding stocks with missing values, this process produced two $n\times p$ data matrices with \smash{$(n,p) \in \{(53, 440), (93, 364)\}$,} where $p$ refers to the number of stocks. 

Table \ref{table_p_values_stock} reports the p-values that arise from the two datasets when using the proposed statistic $T_n$ and Onatski's statistic $R_n(\kappa)$ with $\kappa\in\{1,10\}$. Up to two decimal places, the proposed statistic gives p-values that are 0, whereas the p-values for $R_n(1)$ and $R_n(10)$ are non-zero. Furthermore, the p-values for $R_n(10)$ are larger than those for $R_n(1)$, which is not surprising in light of our earlier discussion of the role of the parameter $\kappa$. Note too that in the particular case of a 5\% nominal level, the statistic $R_n(1)$ is only able to reject the null for one of the datasets, while $R_n(10)$ is not able to reject the null for either dataset. Overall, these results for the p-values of the three statistics are also well aligned with the power comparisons based on synthetic data in Section~\ref{sec:expt:test}.

\begin{table}[H]
\centering
\begin{tabular}{cccc}
& \multicolumn{3}{c}{\underline{ \ \ \ \ \ \ \ \ \ \ \ \ \  p-values \ \ \ \ \ \ \ \ \ \ \ \ \ }}\\[0.1cm]
$(n,p)$                                                              & $T_n$ & $R_n(1)$   & $R_n(10)$ \\ \hline 
\begin{tabular}[c]{@{}l@{}}$(53, 440)$  
\end{tabular} &  0.00  & 0.02 &   0.11 \\
\begin{tabular}[c]{@{}l@{}}$(93, 364)$ 
\end{tabular} &  0.00   & 0.05  &   0.12 \\[0.2cm]
\end{tabular}

\caption{ 
p-values computed from the statistics $T_n, R_n(1), R_n(10)$, based on the stock-market data in Section \ref{sec_stock}.
}
\label{table_p_values_stock}
\end{table}

 \section{Proofs}\label{sec_proofs}

\subsection{Notation and preliminaries for proofs}
For two sequences of non-negative real numbers $(a_n)$ and $(b_n)$, we write $a_n \lesssim b_n$, if there exists a constant $C>0$, independent of $n$, such that $a_n \leq C\, b_n$ for all large $n\in\N$. Moreover, we write $a_n \asymp b_n$ if $a_n \lesssim b_n$ and $b_n \lesssim a_n.$

 \subsection{Preparations for the proof of Proposition \ref{thm_consistency_xi}} \label{sec_proof_thm_conv_T_null}
Let $[l_n,r_n]$ denote the rightmost interval in $\operatorname{supp}(\underline{F}^{y_n,H_n})$, which consists of a finite union of compact intervals \citep[Lemma 2.6]{knowles2017anisotropic}. 
We say that the rightmost edge $r_n$ is \textit{regular} with parameter $\eta > 0$, if  
\begin{align} \label{def_regular_edge}
    r_n - l_n \geq \eta \quad \quad \text{and }\quad \quad
    \lambda_1(\bfSigma)(\xi_{n,0}+\eta)\leq 1.
\end{align}

The following proposition is proved in Section \ref{sec_proof_aux_results}. 

\begin{proposition}
    \label{thm_tw_law}
    Suppose that assumptions \ref{ass_mp_regime}-\ref{ass_lsd} are satisfied and $\mathsf{H}_{0,n}$ holds for all large $n$. Then, there is a fixed constant $\eta>0$ such that the rightmost edge of $\operatorname{supp}(\underline{F}^{y_n,H_n})$ is regular in the sense of \eqref{def_regular_edge} for all large $n$.
    Moreover, under these conditions we have
    \begin{align}
        \xi_{n,k} & \asymp 1, \quad k\in \{0,1\},
        \label{eq_xi_order_1}\\
        \sigma_n &\asymp 1, \label{eq_sigma_order_1}\\
        r_n &\asymp 1. \label{eqn:rnorder1}
    \end{align}
\end{proposition}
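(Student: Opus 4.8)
\textbf{Proof plan for Proposition \ref{thm_tw_law}.}

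The plan is to establish the regularity of the rightmost edge first, and then derive the order estimates \eqref{eq_xi_order_1}--\eqref{eqn:rnorder1} as consequences. Recall that under $\mathsf{H}_{0,n}$ we have $\lambda_1(\bfSigma) \leq \frac{1}{(1+\ve)\xi_{n,1}}$, and that by Lemma~\ref{lem_xin_xi1} we have $0 < \xi_{n,1} - \xi_{n,0} \lesssim 1/p$, so that $\lambda_1(\bfSigma)\xi_{n,0} \leq \lambda_1(\bfSigma)\xi_{n,1} \leq \frac{1}{1+\ve}$; thus the product $\lambda_1(\bfSigma)\xi_{n,0}$ is bounded away from $1$ by a fixed margin for all large $n$. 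First I would use assumption~\ref{ass_lsd} to fix a compact interval $[a,b]\subset(0,\infty)$ containing $\supp(H_n)$ for all large $n$, which immediately gives $\xi_{n,0} \in (0, 1/b)$ by the defining equation \eqref{eq_def_xi}. Combined with $\lambda_1(\bfSigma) \geq \int \lambda\, dH_n(\lambda) \geq a$ (or simply $\lambda_1(\bfSigma)\geq b \geq a$ if $b\in\supp(H_n)$), we get $\lambda_1(\bfSigma)\xi_{n,0} \leq \tfrac{1}{1+\ve} < 1$, which already supplies the second condition in \eqref{def_regular_edge} for any $\eta$ small enough that $\lambda_1(\bfSigma)(\xi_{n,0}+\eta) \leq 1$; since $\lambda_1(\bfSigma)$ is bounded (by $b$, as $\bfSigma = \bfSigmahat$'s population and its top eigenvalue lies in $[a,b]$... actually $\lambda_1(\bfSigma)$ can exceed $b$ only under the alternative, so under $\mathsf{H}_{0,n}$ it too lies in $[a,b]$), we may take $\eta = \ve/[(1+\ve)b]$ or similar, uniformly in $n$.

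For the first condition $r_n - l_n \geq \eta$, I would invoke the edge-regularity machinery of \citet{knowles2017anisotropic} (their Lemma 2.5--2.6 and the surrounding discussion), which characterizes the length of the rightmost support interval $[l_n,r_n]$ of $\Fu^{y_n,H_n}$ in terms of $\xi_{n,0}$, $y_n$, and $H_n$. The key point is that all the relevant quantities — $y_n$ bounded away from $0,1,\infty$ by \ref{ass_mp_regime}, $\supp(H_n)\subset[a,b]$ by \ref{ass_lsd}, and $\lambda_1(\bfSigma)\xi_{n,0}$ bounded away from $1$ as shown above — are controlled uniformly in $n$, and a compactness/continuity argument (or a quantitative lower bound from the explicit formulas in \cite{knowles2017anisotropic}) then yields a fixed $\eta>0$ with $r_n - l_n \geq \eta$ for all large $n$. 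Alternatively, one can argue by contradiction: if $r_n - l_n \to 0$ along a subsequence, then passing to the limit (using that $H_n \Rightarrow H$ with $\supp(H)$ a finite union of intervals in $(0,\infty)$, and $y_n \to y$) would force a degenerate configuration for the limiting edge that is incompatible with the hypothesis that $\lambda_1(\bfSigma)\xi_{n,0}$ stays bounded below $1$; I expect the explicit-formula route to be cleaner.

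The order estimates are then quick corollaries. We already have $\xi_{n,0} \in (c, 1/b)$ for a fixed $c>0$: the lower bound $\xi_{n,0} \gtrsim 1$ follows because the left side of \eqref{eq_def_xi}, viewed as a function of $\xi$, is continuous and increasing on $(0,1/\lambda_1(\bfSigma))$, vanishes at $0$, and must reach the value $n/p = 1/y_n \asymp 1$; since each summand is at most $\big(\tfrac{b\xi}{1-b\xi}\big)^2$ and $b\xi_{n,0} \leq \tfrac{1}{1+\ve}$, the sum is at most a fixed multiple of $\xi_{n,0}^2 \cdot p/p$, forcing $\xi_{n,0} \gtrsim 1$. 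Combined with Lemma~\ref{lem_xin_xi1} this gives $\xi_{n,1} \asymp 1$ as well, proving \eqref{eq_xi_order_1}. For \eqref{eq_sigma_order_1}, plug the bounds $\xi_{n,0}\asymp 1$, $\supp(H_n)\subset[a,b]$, $b\xi_{n,0}\leq \tfrac{1}{1+\ve}$, and $y_n\asymp1$ into the formula \eqref{def_sigma}: the integral $\int\big(\tfrac{\lambda\xi_{n,0}}{1-\lambda\xi_{n,0}}\big)^3 dH_n(\lambda)$ is bounded above and below by fixed constants, so $\sigma_n^3 \asymp 1$. Identically, \eqref{eqn:rnorder1} follows by the same substitution into the formula \eqref{eqn:rndef} for $r_n$. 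The main obstacle is the first regularity condition $r_n - l_n \geq \eta$: verifying it rigorously requires carefully extracting a quantitative, $n$-uniform lower bound on the length of the rightmost support interval from the \MP--type analysis in \cite{knowles2017anisotropic}, whereas the remaining claims are essentially bookkeeping with the defining equations once $\lambda_1(\bfSigma)\xi_{n,0}$ is pinned away from $1$.
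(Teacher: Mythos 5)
The plan is broadly sensible, but it has a circularity that undermines its central step. You invoke Lemma~\ref{lem_xin_xi1} to obtain $\xi_{n,1}>\xi_{n,0}$ (and hence $\lambda_1(\bfSigma)\xi_{n,0}\leq\lambda_1(\bfSigma)\xi_{n,1}\leq\tfrac{1}{1+\ve}$), but the proof of Lemma~\ref{lem_xin_xi1} itself explicitly borrows $\xi_{n,1}>\xi_{n,0}$ from the proof of Proposition~\ref{thm_tw_law} and also uses $\xi_{n,0}\asymp1$ from \eqref{eq_xi_order_1}. So the ordering $\xi_{n,0}<\xi_{n,1}$ must be established directly here, not imported. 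The fix is short and is what the paper does: defining $f_j(x)=\big(\tfrac{\lambda_j(\bfSigma)x}{1-\lambda_j(\bfSigma)x}\big)^2$, the defining relations give $\sum_{j=2}^p\big(f_j(\xi_{n,1})-f_j(\xi_{n,0})\big)=f_1(\xi_{n,0})>0$, and monotonicity of each $f_j$ then forces $\xi_{n,1}>\xi_{n,0}$. Once you have this monotonicity argument in hand, the rest of your outline for \eqref{eq_xi_order_1}--\eqref{eqn:rnorder1} is essentially the paper's: $\xi_{n,1}\leq1/\lambda_2(\bfSigma)\lesssim1$ and $\xi_{n,0}\gtrsim1$ from the defining equation together with $\lambda_1(\bfSigma)\lesssim1$, and then $\sigma_n\asymp1$ and $r_n\asymp1$ by plugging these bounds into \eqref{def_sigma} and \eqref{eqn:rndef}.

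On the first regularity condition $r_n-l_n\geq\eta$, you correctly identify this as the hard part, but you leave it as a sketch of two possible strategies rather than a completed argument. The paper disposes of it with a citation to equation (2.19) of \citet{bao2013local}, which gives exactly the required uniform lower bound on the length of the rightmost support interval under the conditions already established (bounded $y_n$ away from $0,1,\infty$; $\supp(H_n)$ in a fixed compact interval in $(0,\infty)$; $\lambda_1(\bfSigma)\xi_{n,0}$ bounded away from $1$). Your compactness/contradiction route is plausible but would need real work to make precise; since a published result already covers it, you should cite it rather than re-derive it. Finally, a small point: under assumption~\ref{ass_lsd}, $\lambda_1(\bfSigma)\in\supp(H_n)$ lies in the fixed compact interval regardless of whether $\mathsf{H}_{0,n}$ or $\mathsf{H}_{1,n}$ holds, so the parenthetical hedging about $\lambda_1(\bfSigma)$ possibly exceeding $b$ under the alternative is unnecessary.
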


\begin{remark}    
Proposition \ref{thm_tw_law} shows that all the conditions of \cite[Corollary 3.19]{knowles2017anisotropic} are satisfied, which implies that for every $k\in\N$, we have
    \begin{align}\label{eqn:jointTWlimit}
       \frac{n^{2/3}}{\sigma_n} \lb \lambda_i (\bfSigmahat) - r_n \rb _{1 \leq i \leq k}
       \cond (\zeta_1,\dots,\zeta_k)
    \end{align}
    as $n\to\infty$, where $(\zeta_1,\dots,\zeta_k)$ has a $k$-dimensional Tracy-Widom distribution. It is also worth noting that many other variations of this result have been established, such as in~\citep{bao2015, leeschnelli2016, ding2018necessary}, among others.
\end{remark}

 \subsubsection{Proof of Proposition \ref{thm_consistency_xi} and Corollary \ref{cor:CI}} \label{sec_proof_consistency_xi}
The consistency result given in Proposition \ref{thm_consistency_xi}\eqref{eqn:hatxithm} is a consequence of the following lemma.

 \begin{lemma} \label{lem1_xi}
  Suppose, $\mathsf{H}_{0,n}$ holds for all large $n$ and \ref{ass_mp_regime}-\ref{ass_lsd} are satisfied. Under these conditions, there exists a $\delta\in(0,1/6)$ such that if $\eta_n := n^{-1+\delta}$, then as $n\to\infty$
 \begin{align}\label{eqn:hatxlim}
 \hat\xi_n \,
 + \,\su_n^0 ( \lambda_1(\hat\bfSigma) + i \eta_n ) \tag{a}
  = \op, 
 \end{align}
 and
   \begin{align}\label{eqn:xilim}
     \xi_{n,k} \, + \, \su_n^0 ( \lambdahat + i \eta_n ) \tag{b}
 = \op, \quad k \in \{0,1\}. 
 \end{align}

 \end{lemma}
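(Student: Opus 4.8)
\textbf{Proof plan for Lemma~\ref{lem1_xi}.}

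The core of both \eqref{eqn:hatxlim} and \eqref{eqn:xilim} is to show that the empirical Stieltjes transform of the companion matrix, evaluated near the right edge, converges to the deterministic limit $\su_n^0$ evaluated at the edge $r_n$. I would organize the proof around three reductions. First, I would use Proposition~\ref{thm_tw_law} to fix a regular edge with parameter $\eta>0$, so that $\xi_{n,0}\asymp 1$, $r_n\asymp1$, and the relevant functions are Lipschitz with uniformly controlled constants in a fixed complex neighborhood of $r_n$ that stays bounded away from $\operatorname{supp}(\Fu^{y_n,H_n})$ except at $r_n$ itself. Second, I would invoke the edge rigidity estimates from \cite{knowles2017anisotropic} (which apply because the hypotheses of Corollary~3.19 there hold under $\mathsf{H}_{0,n}$, as recorded in the remark after Proposition~\ref{thm_tw_law}): these give $|\lambda_1(\hat\bfSigma)-r_n| = \O_\P(n^{-2/3+\epsilon})$ and, more importantly, a local law controlling $|\su_n(z) - \su_n^0(z)|$ uniformly for $z$ in the spectral domain with $\im(z)\gtrsim n^{-2/3+\delta'}$. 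Since $\eta_n=n^{-1+\delta}$ with $\delta<1/6$ is \emph{smaller} than $n^{-2/3}$, the subtlety is that the argument $\lambda_1(\hat\bfSigma)+i\eta_n$ is at a height below the reach of the bulk local law; this is why the edge is used — at a regular edge one has square-root behavior and the relevant estimates extend down to this scale (this is essentially the mechanism behind the Tracy–Widom limit).

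For part \eqref{eqn:xilim}, which is the cleaner of the two, the plan is: write $\xi_{n,k} = -\su_n^0(r_n)$ (this is \eqref{eq_xi_formula} for $k=0$, and for $k=1$ it follows from Lemma~\ref{lem_xin_xi1} giving $|\xi_{n,1}-\xi_{n,0}|\lesssim 1/p$, which is $o(1)$). Then bound $|\su_n^0(\lambda_1(\hat\bfSigma)+i\eta_n) - \su_n^0(r_n)|$ by controlling the modulus of continuity of $\su_n^0$ near the edge. Because $r_n$ is a regular (square-root) edge, $\su_n^0$ extends continuously to $r_n$ and its behavior there is governed by $|\su_n^0(r_n+w) - \su_n^0(r_n)| \lesssim |w|^{1/2}$ for $w$ in the relevant sector; combined with $|\lambda_1(\hat\bfSigma)-r_n+i\eta_n| = \O_\P(n^{-2/3+\epsilon})$, this gives a bound of order $n^{-1/3+\epsilon/2} = o_\P(1)$. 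One has to be slightly careful about which side of $r_n$ the point $\lambda_1(\hat\bfSigma)$ sits on and that the displacement stays within the sector of validity of the square-root expansion, but the imaginary part $i\eta_n$ plus the edge regularity handles this.

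For part \eqref{eqn:hatxlim}, the plan is to control $|\hat\xi_n + \su_n^0(\lambda_1(\hat\bfSigma)+i\eta_n)|$. Recall $\hat\xi_n = -\tilde\su_n(\lambda_1(\hat\bfSigma))$ (on the event $\lambda_1(\hat\bfSigma)>\lambda_2(\hat\bfSigma)$, which has probability $1+o(1)$ under $\mathsf{H}_{0,n}$ by the Tracy–Widom limit for the gap), where $\tilde\su_n$ is the companion-matrix Stieltjes transform with the top eigenvalue deleted, evaluated \emph{on the real line} at $\lambda_1(\hat\bfSigma)$. I would split the error into: (i) $|\tilde\su_n(\lambda_1(\hat\bfSigma)) - \tilde\su_n(\lambda_1(\hat\bfSigma)+i\eta_n)|$, controlled by $\eta_n$ times a bound on $\tilde\su_n'$ just above the deleted top eigenvalue — the point of deleting $\lambda_1$ and of the specific scale $\eta_n = n^{-1+\delta}$, $\delta<1/6$, is precisely that the $n-1$ remaining companion eigenvalues cluster within $\O_\P(n^{-2/3+\epsilon})$ of the edge but the nearest one, $\lambda_2$, is at distance $\asymp n^{-2/3}$ from $\lambda_1$, so that $|\tilde\su_n'(\lambda_1(\hat\bfSigma)+i\eta_n)| \lesssim (n^{-2/3})^{-2}\cdot n^{-1} + \ldots$ stays controlled enough that multiplying by $\eta_n$ gives $o_\P(1)$ — this is the delicate quantitative step; (ii) $|\tilde\su_n(z) - \su_n(z)|$ for $z = \lambda_1(\hat\bfSigma)+i\eta_n$, which is the discrepancy from deleting one eigenvalue, of size $\frac1n\cdot\frac{1}{|\lambda_1(\hat\bfSigma)-\lambda_1(\underline{\hat\bfSigma})+i\eta_n|}$ — note $\hat\bfSigma$ and $\underline{\hat\bfSigma}$ share nonzero eigenvalues so $\lambda_j(\underline{\hat\bfSigma})=\lambda_j(\hat\bfSigma)$ for $j\le \min(n,p)$ and this term is $o_\P(1)$; and (iii) the local law $|\su_n(z)-\su_n^0(z)| = o_\P(1)$ at this edge scale from \cite{knowles2017anisotropic}. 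Combining (i)--(iii) yields \eqref{eqn:hatxlim}.

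\textbf{Main obstacle.} The hardest part is step (i) above — controlling the difference between evaluating $\tilde\su_n$ on the real axis at $\lambda_1(\hat\bfSigma)$ versus slightly above it at height $\eta_n = n^{-1+\delta}$ — since this requires a sharp lower bound on the spacing $\lambda_1(\hat\bfSigma) - \lambda_2(\hat\bfSigma)$ (and more generally on how the top few companion eigenvalues are separated) that holds with high probability, not just in distribution. This is exactly where the restriction $\delta<1/6$ enters: one needs $\eta_n$ small enough compared to the $n^{-2/3}$ edge spacing that the real-axis and shifted evaluations agree, while keeping $\eta_n$ large enough (or rather, using it merely as a regularizer) that $\su_n^0$ is well-defined there. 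Assembling the edge rigidity and eigenvalue-spacing inputs from \cite{knowles2017anisotropic} at precisely the scale $n^{-1+\delta}$, rather than the more standard $n^{-2/3+\epsilon}$, is the technical crux.
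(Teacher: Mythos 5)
Your overall decomposition is in the right spirit, and your approach to part~\eqref{eqn:xilim} via H\"older-$1/2$ continuity of $\su_n^0$ near the square-root edge is a valid, slightly more compact route than the paper's (the paper instead splits the integral $\int_0^{r_n}\frac{\lambdahat+i\eta_n-r_n}{(\lambda-r_n)(\lambda-\lambdahat-i\eta_n)}d\Fu^{y_n,H_n}(\lambda)$ into three windows $[0,r_n-c]$, $[r_n-c,r_n-\tau_n]$, $[r_n-\tau_n,r_n]$ with $\tau_n=n^{-2/3+\delta}$ and bounds each one using Lemma~\ref{lem_sqrt_mp_density}). The two reach the same place.

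For part~\eqref{eqn:hatxlim}, however, there are two issues. First, a misconception: you claim $\eta_n=n^{-1+\delta}$ is ``below the reach of the bulk local law'' and that some extra mechanism is needed to push down to this scale. That is not the case. The local law the paper actually invokes (Lemma~\ref{thm_local_law}, from \cite[Theorem 3.2]{bao2015}) is formulated on the spectral domain $\mathbf{D}_n(\tau,\tau')$ with $\im(z)\geq n^{-1+\tau}$, i.e.\ all the way down to the natural scale $n^{-1}$ up to a loss $n^{\tau}$. Choosing $\delta=\tau$ places $\lambdahat+i\eta_n$ directly inside the domain, so step (iii) of your plan is not a subtlety at all; the existing local law covers it immediately.

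Second, and this is the genuine gap: your step (i), the ``delicate quantitative step,'' cannot be closed with what you write. You bound $|\tilde\su_n(\lambdahat)-\tilde\su_n(\lambdahat+i\eta_n)|$ by $\eta_n$ times $|\tilde\su_n'|$, and estimate $|\tilde\su_n'(\lambdahat+i\eta_n)|\lesssim (n^{-2/3})^{-2}\cdot n^{-1}+\ldots$. The ``$\ldots$'' hides the entire difficulty: it accounts for only the single nearest eigenvalue $\lambda_2(\bfSigmahatu)$, but there may be many eigenvalues at distance $\mathcal{O}(n^{-1/3})$ from the top, each contributing a term of the same order, and without a bound on their \emph{number} you cannot control the sum. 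The paper proves exactly such a counting estimate as a separate step: Proposition~\ref{prop_order_hat_N_n} shows that $\hat N_n$, the number of companion eigenvalues within $n^{-1/3}$ of $\lambda_1(\bfSigmahatu)$, is $\mathcal{O}_{\PR}(n^{1/2})$. This follows from rigidity at the edge, the rate-of-convergence estimate comparing $F^{\bfSigmahatu}$ to $\Fu^{y_n,H_n}$, and the square-root behavior of the density (Lemma~\ref{lem_sqrt_mp_density}), which makes the mass of a window of width $n^{-1/3}$ of order $n\cdot(n^{-1/3})^{3/2}=n^{1/2}$. Plugging this count into the near-window contribution gives $\frac{\hat N_n}{n}\cdot\frac{\eta_n}{(\lambda_1(\bfSigmahatu)-\lambda_2(\bfSigmahatu))^2}=\mathcal{O}_{\PR}(n^{-1/2}\cdot n^{-1+\delta}\cdot n^{4/3})=\mathcal{O}_{\PR}(n^{\delta-1/6})$, which is where the constraint $\delta<1/6$ actually comes from. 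Your plan correctly identifies step (i) as the hard part and correctly flags the role of the $n^{-2/3}$ eigengap, but it does not name, and could not complete without, the $\hat N_n=\mathcal{O}_{\PR}(n^{1/2})$ counting input, which is the missing ingredient.
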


 This result will be proven later.

The next result provides an upper bound on the number of sample eigenvalues in a small interval around $\lambdahat$ and is based on an approximation in terms of the \MP distribution $\Fu^{y_n,H_n}$. To state the result, we define 
\begin{align*}
    \hat N_n := \sum_{j=1}^n 1\{  \lambda_j (\bfSigmahatu) > \lambda_1(\bfSigmahatu) - n^{-1/3} \}
\end{align*}
as the number of sample eigenvalues greater than $\lambda_1(\bfSigmahatu)-n^{-1/3}$.
\begin{proposition} \label{prop_order_hat_N_n}
    Suppose that $\mathsf{H}_{0,n}$ holds for all large $n$ and that assumptions \ref{ass_mp_regime}-\ref{ass_lsd} are satisfied. Then, 
    \begin{align*}
        \hat N_n = \mathcal{O}_{\PR}\lb n^{1/2}\rb. 
    \end{align*}
\end{proposition}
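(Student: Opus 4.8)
The heart of the matter is that at the regular right edge $r_n$ the limiting density of $\Fu^{y_n,H_n}$ decays like $c\sqrt{r_n-x}$, so a window of width of order $n^{-1/3}$ just below $r_n$ carries spectral mass of order $(n^{-1/3})^{3/2}=n^{-1/2}$, i.e.\ about $n\cdot n^{-1/2}=n^{1/2}$ eigenvalues. Since $\hat N_n$ counts sample eigenvalues of $\bfSigmahatu$ in a window of exactly this type, the bound $\hat N_n=\mathcal{O}_{\PR}(n^{1/2})$ should follow from a local law for $\bfSigmahatu$ at spectral scale $n^{-1/3}$, which is far coarser than the edge scale $n^{-2/3}$ and hence well within reach. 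Throughout, I would use that, by Proposition~\ref{thm_tw_law}, under $\mathsf{H}_{0,n}$ the edge $r_n$ is regular with a fixed parameter $\eta>0$ and $r_n\asymp\sigma_n\asymp1$, so that the hypotheses of the anisotropic local law of \cite{knowles2017anisotropic} hold uniformly in $n$.

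The plan has three steps. \emph{Step 1: localize $\lambda_1$.} Since the nonzero eigenvalues of $\bfSigmahat$ and $\bfSigmahatu$ coincide, $\lambda_1(\bfSigmahatu)=\lambdahat$, and the Tracy--Widom limit \eqref{eqn:jointTWlimit} together with $\sigma_n\asymp1$ gives $\lambda_1(\bfSigmahatu)-r_n=\mathcal{O}_{\PR}(n^{-2/3})$. Hence for any $\varepsilon'>0$ there is an event $\Omega_n$ with $\PR(\Omega_n)\ge1-\varepsilon'$ for all large $n$ on which $|\lambda_1(\bfSigmahatu)-r_n|\le n^{-1/3}$; on $\Omega_n$, using $\lambda_j(\bfSigmahatu)\le\lambda_1(\bfSigmahatu)$,
\begin{align*}
\hat N_n=\#\big\{\,j:\lambda_j(\bfSigmahatu)\in(\lambda_1(\bfSigmahatu)-n^{-1/3},\lambda_1(\bfSigmahatu)]\,\big\}\le\#\big\{\,j:\lambda_j(\bfSigmahatu)\in[r_n-2n^{-1/3},r_n+n^{-1/3}]\,\big\}.
\end{align*}
\emph{Step 2: count via the resolvent.} For any $E\in\R$ and $\eta>0$ one has the elementary bound $\#\{j:|\lambda_j(\bfSigmahatu)-E|\le\eta\}\le 2\eta\,n\,\operatorname{Im}\!\big[\tfrac1n\operatorname{tr}(\bfSigmahatu-E-i\eta)^{-1}\big]$. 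Take $\eta=n^{-1/3}$ and let $E$ range over an $O(1)$-point grid covering $[r_n-2n^{-1/3},r_n+n^{-1/3}]$; the averaged local law of \cite{knowles2017anisotropic} gives, uniformly over the grid, $\operatorname{Im}\!\big[\tfrac1n\operatorname{tr}(\bfSigmahatu-E-i\eta)^{-1}\big]=\operatorname{Im}\su_n^0(E+i\eta)+\mathcal{O}_{\PR}(\tfrac1{n\eta})$. \emph{Step 3: the deterministic edge bound.} Since $r_n$ is regular uniformly in $n$, one has $\operatorname{Im}\su_n^0(E+i\eta)\asymp\sqrt{\eta}$ for all $E$ with $|E-r_n|\lesssim\eta$ (a property of the solution of \eqref{eq_mp_stieltjes} at a square-root edge). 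Combining, each grid term is at most $\mathcal{O}_{\PR}\big(\eta n(\sqrt{\eta}+\tfrac1{n\eta})\big)=\mathcal{O}_{\PR}(n\eta^{3/2}+1)=\mathcal{O}_{\PR}(n^{1/2})$; summing over the $O(1)$ grid points and intersecting with $\Omega_n$ shows $\hat N_n=\mathcal{O}_{\PR}(n^{1/2})$, and letting $\varepsilon'\to0$ completes the argument.

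The only real work is in making Steps 2--3 uniform in $n$: both $H_n$ and $y_n$ vary with $n$, so the square-root behavior of $\su_n^0$ near $r_n$ and the error in the averaged local law must be controlled with constants independent of $n$. This is exactly what the uniform regularity of the edge (a fixed $\eta$, from Proposition~\ref{thm_tw_law}) and the uniform compact support of $H_n$ in \ref{ass_lsd} provide; with these in place, the required estimates are the standard edge estimates for sample covariance matrices found in \cite{knowles2017anisotropic} and related works \cite{bao2015, leeschnelli2016}. An essentially equivalent route replaces the resolvent count by edge rigidity, $\lambda_j(\bfSigmahatu)\le\gamma_j+n^{o(1)}j^{-1/3}n^{-2/3}$ for the $j$-th classical location $\gamma_j$ of $\Fu^{y_n,H_n}$, which satisfies $r_n-\gamma_j\asymp(j/n)^{2/3}$; then $\lambda_j(\bfSigmahatu)>r_n-2n^{-1/3}$ forces $r_n-\gamma_j\lesssim n^{-1/3}$, hence $j\lesssim n^{1/2}$.
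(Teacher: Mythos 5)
Your proposal is correct, and your primary route is genuinely different from the paper's. The paper first uses Theorem~3.3(i) of \cite{bao2015} to put $\lambda_1(\bfSigmahatu)$ within $\mathcal{O}(n^{-1/3})$ of $r_n$ (same as your Step~1), then invokes Theorem~3.3(ii) of \cite{bao2015}---a Kolmogorov-distance bound $\sup_t|F^{\bfSigmahatu}(t)-\Fu^{y_n,H_n}(t)|=\mathcal{O}_{\PR}(n^{-1/2})$---to replace the empirical spectral counting function by the deterministic \MP law at a cost of $\mathcal{O}(n^{1/2})$ eigenvalues, and finally integrates the square-root density (Lemma~\ref{lem_sqrt_mp_density}) over a window of width $\mathcal{O}(n^{-1/3})$ to get the remaining $\mathcal{O}(n^{1/2})$. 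Your Step~2--3, by contrast, avoids the Kolmogorov-distance packaging entirely and works directly from the averaged local law via the elementary resolvent-counting inequality $\#\{j:|\lambda_j-E|\leq\eta\}\leq 2\eta n\operatorname{Im}[\frac1n\tr(\bfSigmahatu-E-i\eta)^{-1}]$ and the deterministic edge estimate $\operatorname{Im}\su_n^0(E+i\eta)\asymp\sqrt\eta$; this is a more ``first-principles'' derivation of the same $n\eta^{3/2}\asymp n^{1/2}$ count, at the modest cost of having to set up the grid, handle the $\mathcal{O}_\PR((n\eta)^{-1})$ fluctuation term (which is $\mathcal{O}(1)$ at $\eta=n^{-1/3}$ and hence harmless), and verify the square-root behavior of $\operatorname{Im}\su_n^0$ uniformly in $n$, all of which are standard once the uniform edge regularity from Proposition~\ref{thm_tw_law} is in hand. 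Your alternative route via eigenvalue rigidity is essentially the paper's route in a different guise: a Kolmogorov-distance bound of order $n^{-1/2}$ is the integrated form of the pointwise rigidity estimate you cite, and both lead to the same conclusion by the same classical-location computation $r_n-\gamma_j\asymp(j/n)^{2/3}$.
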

In the the proof of Proposition \ref{prop_order_hat_N_n},  
 we utilize the square-root behavior of the density of the \MP distribution close to the rightmost edge of its support, as recorded below.
\begin{lemma}\cite[Lemma 2.1]{bao2013local}
    \label{lem_sqrt_mp_density}
      Suppose that Assumptions~\ref{ass_mp_regime} and \ref{ass_lsd} hold, and that $\mathsf{H}_{0,n}$ holds for all large $n$.
      Then, $\underline{F}^{y_n,H_n}$ has a continuous derivative, denoted $\rho_n^0$, on $\R \setminus \{0\}$, and there exists a constant $c>0$ not depending on $n$ such that 
        \begin{align*}
            \rho_n^0 (\lambda) \asymp \sqrt{r_n - \lambda} \textnormal{ for all } \lambda \in [r_n - c, r_n].
        \end{align*}
\end{lemma}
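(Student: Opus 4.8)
The plan is to reduce the claimed square-root edge behavior to the Silverstein--Choi description of $\supp(\Fu^{y_n,H_n})$ together with a local quadratic inversion, making every estimate uniform in $n$ by invoking the regular-edge conclusion of Proposition~\ref{thm_tw_law}. Concretely, for real $m$ away from the poles $\{0\}\cup\{-1/\lambda:\lambda\in\supp(H_n)\}$ I would introduce the function
\[
  z_n(m)=-\frac{1}{m}+y_n\int\frac{\lambda}{1+m\lambda}\,dH_n(\lambda),
\]
which by~\eqref{eq_mp_stieltjes} is the functional inverse of $\su_n^0$ on the relevant domains and which extends holomorphically off its (real) poles. Setting $m_c:=-\xi_{n,0}$, the identities~\eqref{eqn:rndef} and~\eqref{eq_xi_formula} give $z_n(m_c)=r_n$ and $\su_n^0(r_n)=m_c$, so $r_n$ is the value of $z_n$ at $m_c$, and I would check directly (or cite~\citep{silverstein1995analysis}) that $m_c$ is a critical point, $z_n'(m_c)=0$.

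Next I would compute the first two derivatives,
\begin{align*}
  z_n'(m)&=\frac{1}{m^2}-y_n\int\frac{\lambda^2}{(1+m\lambda)^2}\,dH_n(\lambda),\\
  z_n''(m)&=-\frac{2}{m^3}+2y_n\int\frac{\lambda^3}{(1+m\lambda)^3}\,dH_n(\lambda),
\end{align*}
and observe that evaluating $z_n'$ at $m_c$ and using the defining equation~\eqref{eq_def_xi} for $\xi_{n,0}$ (with $k=0$) reproduces $z_n'(m_c)=0$, while comparing $z_n''(m_c)$ with the definition~\eqref{def_sigma} of $\sigma_n$ yields the clean identity $z_n''(m_c)=2\sigma_n^3$. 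By Proposition~\ref{thm_tw_law} one has $\sigma_n\asymp1$, hence $z_n''(m_c)\asymp1$ uniformly in $n$. The regular-edge property~\eqref{def_regular_edge} supplies a fixed $\eta>0$ with $\lambda_1(\bfSigma)(\xi_{n,0}+\eta)\le1$, so $m_c=-\xi_{n,0}$ stays at distance bounded below (uniformly in $n$) from every pole $-1/\lambda_j(\bfSigma)$, while $\xi_{n,0}\asymp1$ keeps it away from $0$; combined with the uniformly compact support of $H_n$ from~\ref{ass_lsd}, this lets me extend $z_n$ holomorphically to a complex disc $D=\{|m-m_c|<\rho_0\}$ of fixed radius on which $z_n'''$ and all lower-order derivatives are bounded by a constant independent of $n$. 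The outcome is the uniform expansion $z_n(m)-r_n=\sigma_n^3(m-m_c)^2\big(1+O(|m-m_c|)\big)$ on $D$.

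Finally, for $\lambda\in[r_n-c,r_n)$ with $c>0$ a small fixed constant, I would take $m=\su_n^0(\lambda+i0)$, the root of $z_n(m)=\lambda$ with $\im m\ge0$; since $\lambda-r_n<0$ and $\sigma_n^3\asymp1$, the expansion forces $|m-m_c|\asymp\sqrt{r_n-\lambda}$ and, on the branch with nonnegative imaginary part, $\su_n^0(\lambda+i0)=m_c+i\sqrt{(r_n-\lambda)/\sigma_n^3}\,(1+o(1))$, with the $o(1)$ of size $O(\sqrt c)$ and hence uniform in $\lambda$ and $n$ once $c$ is small enough. Stieltjes inversion $\rho_n^0(\lambda)=\pi^{-1}\im\su_n^0(\lambda+i0)$ then delivers $\rho_n^0(\lambda)\asymp\sqrt{r_n-\lambda}$ on $[r_n-c,r_n]$; the existence and continuity of $\rho_n^0$ on $(0,\infty)$, and thus on $\R\setminus\{0\}$, is the classical Silverstein--Choi regularity statement for deformed \MP laws and can be quoted. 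The main obstacle is not the inversion itself --- a routine Taylor argument --- but securing the two uniformity facts that feed into it: that $m_c$ stays away from the poles of $z_n$, and that $z_n''(m_c)=2\sigma_n^3$ is bounded away from $0$ and $\infty$. These are exactly the content of the regular-edge conclusion of Proposition~\ref{thm_tw_law}, so the bulk of the work is that reduction, which is already in hand.
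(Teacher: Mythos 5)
The paper does not prove this lemma; it quotes it directly from \cite[Lemma~2.1]{bao2013local}, and the only work the paper itself does is to verify (via Proposition~\ref{thm_tw_law}) that the regular-edge hypothesis of that result holds under $\mathsf{H}_{0,n}$ and \ref{ass_mp_regime}--\ref{ass_lsd}. Your sketch supplies a self-contained proof along the standard Silverstein--Choi lines, which is in substance the same argument that \cite{bao2013local} (and Knowles--Yin, Lee--Schnelli) use: pass to the functional inverse $z_n(m)$, identify $m_c=-\xi_{n,0}$ as the critical point with $z_n'(m_c)=0$ from~\eqref{eq_def_xi}, expand quadratically, and Stieltjes-invert. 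Your computations check out, including the tidy identity $z_n''(m_c)=2\sigma_n^3$ (which falls out of~\eqref{def_sigma} and is a nice way to see why $\sigma_n$ is the right scale for the Tracy--Widom limit), and your use of $\lambda_1(\bfSigma)(\xi_{n,0}+\eta)\le1$ plus $\xi_{n,0}\asymp1$ to keep $m_c$ a fixed distance from all poles of $z_n$ is exactly what makes the expansion uniform in $n$. The one place to be slightly more careful is the branch selection when solving $z_n(m)=\lambda$: you should state explicitly that for $\lambda\in[r_n-c,r_n)$ you take the root continuous in $\im z>0$ as $z\to\lambda+i0$, and that the sign of $\im(m-m_c)$ is pinned down by $\im\su_n^0>0$ on $\C^+$; this gives the $+i\sqrt{(r_n-\lambda)/\sigma_n^3}$ branch unambiguously. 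With that said, your proof and the paper's citation encode the same argument; the only genuine difference is that you unpack it while the paper defers to the literature and concentrates on verifying the hypotheses.
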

This sets us in the position to prove Proposition \ref{prop_order_hat_N_n}.
\begin{proof}[Proof of Proposition \ref{prop_order_hat_N_n}]
Observe that
\begin{equation}
    \hat N \ = \  n \Big(1-F^{\bfSigmahatu}(\lambda_1(\bfSigmahatu)-n^{-1/3})\Big).
\end{equation}
By Theorem~3.3(i) in~\citep{bao2015}, there is a constant $C>0$ not depending on $n$ such that the event $\lambda_1(\bfSigmahatu)\in [r_n\pm Cn^{-1/3}]$ occurs with probability approaching $1$ as $n\to\infty$, and so the event
\begin{equation}
    \hat N \ \leq \  n \Big(1-F^{\bfSigmahatu}(r_n-(C+1)n^{-1/3})\Big)
\end{equation}
also holds with probability $1-o(1)$. Next, Theorem~3.3(ii) in~\citep{bao2015} implies that the right side of the previous inequality can be further upper bounded by replacing $F^{\bfSigmahatu}$ with $\Fu^{y_n,H_n}$ at the expense of an error of order $\mathcal{O}(n^{1/2})$. In other words, the inequality
\begin{equation}
    \hat N \ \leq \  n \Big(1-\Fu^{y_n,H_n}(r_n-(C+1)n^{-1/3})\Big) \ + \ \mathcal{O}(n^{1/2})
\end{equation}
holds with probability $1-o(1)$.  Consequently, the square-root behavior in Lemma~\ref{lem_sqrt_mp_density} implies that
\begin{equation}
\begin{split}
    \hat N & \ \lesssim \ n \int_{r_n-(C+1)n^{-1/3}}^{r_n} \sqrt{r_n-\lambda}d\lambda \ + \ n^{1/2}\\
    & \ \lesssim \ n^{1/2}  
    \end{split}
\end{equation}
holds with probability approaching 1, as needed.
\end{proof}

For the proof of Lemma \ref{lem1_xi}\eqref{eqn:hatxlim}, it is convenient to recall the local law at the rightmost edge $r_n$. For this purpose, we define the spectral domain 
$$ 
\mathbf{D}_n(\tau,\tau')=\Bigg\{z\in\C^+:  |\re(z)-r_n|\leq \tau' \ , \  n^{-1+\tau}\leq \im(z)\leq 1\Bigg\},
 $$
 where $\tau,\tau' >0$ are fixed with respect to $n.$
In the statement of this result, recall that $\su_n=s_{F^{\underline{\hat\bfSigma}}}$ is the Stieltjes transformation of the empirical spectral distribution corresponding to the companion matrix $\underline{\bfSigmahat}$ defined in \eqref{eq_def_companion}.
\begin{lemma}\label{thm_local_law} 
    Suppose that $\mathsf{H}_{0,n}$ holds for all large $n$ and that assumptions \ref{ass_mp_regime}-\ref{ass_lsd} are satisfied.  Let $\tau \in (0,1)$ be fixed. 
Then, there exist a constant $ \tau'>0$ such that the following limit holds for any fixed $\epsilon>0$ as $n\to\infty$,
  \begin{align*}
 	\PR \lb \sup_{z\in \mathbf{D}_n(\tau, \tau')}  |\su_n (z) - \su_n^0(z)| > \epsilon \rb  = o(1) . 
    \end{align*}
    \end{lemma}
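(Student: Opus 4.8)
The plan is to deduce the lemma from an existing anisotropic/isotropic local law for sample covariance matrices together with the regularity of the right edge established in Proposition~\ref{thm_tw_law}. First I would invoke Proposition~\ref{thm_tw_law}, which guarantees that under $\mathsf{H}_{0,n}$ there is a fixed $\eta>0$ so that the rightmost edge $r_n$ of $\operatorname{supp}(\underline F^{y_n,H_n})$ is regular in the sense of~\eqref{def_regular_edge} for all large $n$, and that $\xi_{n,0}\asymp 1$, $\sigma_n\asymp 1$, $r_n\asymp 1$. Combined with \ref{ass_mp_regime}--\ref{ass_lsd} (in particular the uniform boundedness and boundedness away from $0$ of $\operatorname{supp}(H_n)$, and the tail/moment condition in~\ref{ass_mom}), these are exactly the hypotheses under which the strong local Marchenko--Pastur law holds uniformly on spectral domains near a regular edge; see~\cite[Theorem~3.12 and Corollary~3.18]{knowles2017anisotropic} (or the edge local law of~\cite{bao2015}). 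The relevant statement is that for a suitable fixed $\tau'>0$, one has, with very high probability,
\begin{align*}
  \sup_{z\in\mathbf D_n(\tau,\tau')}\bigl|\su_n(z)-\su_n^0(z)\bigr|
  \ \lesssim\ \frac{n^{\epsilon'}}{n\,\im(z)}+\sqrt{\frac{n^{\epsilon'}\,\im\su_n^0(z)}{n\,\im(z)}}
\end{align*}
for every $\epsilon'>0$, where the exceptional probability is $o(1)$ (indeed $o(n^{-D})$ for any $D$).

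The second step is to convert this quantitative bound into the qualitative statement of the lemma. On $\mathbf D_n(\tau,\tau')$ we have $\im(z)\geq n^{-1+\tau}$, so $1/(n\,\im(z))\leq n^{-\tau}\to 0$; choosing $\epsilon'<\tau$ makes the first term $o(1)$ uniformly. For the second term, near a regular edge $\im\su_n^0(z)$ is bounded (this is where edge regularity and $r_n\asymp1$, $\xi_{n,0}\asymp1$ enter, via the square-root edge behavior recorded in Lemma~\ref{lem_sqrt_mp_density} and the representation~\eqref{eq_xi_formula}), so the square-root term is also $\lesssim (n^{\epsilon'}/(n\,\im(z)))^{1/2}\leq n^{(\epsilon'-\tau)/2}\to0$ uniformly. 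Hence the right-hand side is $o(1)$ deterministically on the whole domain, and the exceptional event has probability $o(1)$; this yields $\PR(\sup_{z\in\mathbf D_n(\tau,\tau')}|\su_n(z)-\su_n^0(z)|>\epsilon)=o(1)$ for every fixed $\epsilon>0$, as claimed.

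I expect the only genuine subtlety to be bookkeeping rather than mathematics: one must make sure that the cited local law is stated for the companion (``underline'') Stieltjes transform $\su_n=s_{F^{\underline{\hat\bfSigma}}}$ rather than for $s_{F^{\hat\bfSigma}}$, and that its domain of validity—after one fixes $\tau$—can be taken of the form $\mathbf D_n(\tau,\tau')$ with $\tau'>0$ depending only on the edge-regularity parameter $\eta$ and the compact interval containing $\operatorname{supp}(H_n)$. Both points are handled by the edge version of the law in~\cite{knowles2017anisotropic}, whose hypotheses are matched term-by-term by Proposition~\ref{thm_tw_law} and assumptions \ref{ass_mp_regime}--\ref{ass_lsd}. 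A secondary point is that $\su_n$ is only defined off the spectrum of $\underline{\hat\bfSigma}$; but on $\mathbf D_n(\tau,\tau')\subset\C^+$ this is automatic since $\im(z)>0$, so no modification (unlike the $\tilde\su_n$ of~\eqref{def_su_n_tilde}) is needed here. Thus the proof is essentially a citation plus the elementary estimate that the local-law error bound tends to zero uniformly on $\mathbf D_n(\tau,\tau')$.
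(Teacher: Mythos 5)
Your proof takes essentially the same route as the paper: the paper disposes of this lemma in one line by citing the local law of \cite[Theorem 3.2]{bao2015} together with Proposition~\ref{thm_tw_law} (which supplies edge regularity under $\mathsf{H}_{0,n}$). You cite \cite{knowles2017anisotropic} as the primary reference but explicitly mention the \cite{bao2015} edge local law as an alternative, and you correctly identify the role of Proposition~\ref{thm_tw_law} in verifying the hypotheses of the cited result. The only substantive addition you make — spelling out why the quantitative high-probability error bound is uniformly $o(1)$ on $\mathbf{D}_n(\tau,\tau')$ because $\im(z)\ge n^{-1+\tau}$ there and $\im\su_n^0$ stays bounded near a regular edge — is a detail the paper elides as a routine consequence of the citation; it is correct and worth having written out. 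One minor inaccuracy: the error bound you display is the entrywise/isotropic form $\Psi(z)$ rather than the (typically smaller) averaged bound that applies to $\su_n-\su_n^0$, but since both vanish uniformly on the stated domain, this does not affect the argument.
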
 
 The proof of Lemma \ref{thm_local_law} follows from the local law given in \cite[Theorem 3.2]{bao2015} and Proposition \ref{thm_tw_law}. 
 Now we are in the position to prove Lemma \ref{lem1_xi}\eqref{eqn:hatxlim}. 

 \begin{proof}[Proof of Lemma \ref{lem1_xi}\eqref{eqn:hatxlim}] 
 We will first show that
 \begin{equation}\label{goal1}
     \hat\xi_n
 +  \su_n ( \lambda_1(\hat\bfSigma) + i \eta_n ) = o_{\P}(1).
 \end{equation}
 Recall from~\eqref{eqn:xihatdef} that $\hat\xi_n=-\underline{\tilde s}_n(\lambda_1(\hat\bfSigma))$ holds when $\lambda_1(\hat\bfSigma)>\lambda_2(\hat\bfSigma)$. Using the limit~\eqref{eqn:jointTWlimit} and the fact that $\zeta_1 \neq \zeta_2$ holds almost surely when $(\zeta_1,\zeta_2)$ has a two-dimensional Tracy-Widom distribution~\cite[Theorem 2.5.2]{anderson2010introduction}, it follows that the event $\lambda_1(\hat\bfSigma)>\lambda_2(\hat\bfSigma)$ occurs with probability approaching 1 as $n\to\infty$. For this reason, we may work asymptotically under assumption that $\hat\xi_n=-\underline{\tilde s}_n(\lambda_1(\hat\bfSigma))$. This leads to
  \begin{align*}
  	| \hat\xi_n
 +  \su_n ( \lambda_1(\hat\bfSigma) + i \eta_n ) |
 & \leq  \frac{1}{n} \sum_{j=2}^n \left| 
 \frac{1}{\lambda_j(\underline{\bfSigmahat} ) - \lambda_1(\underline{\bfSigmahat} ) - i \eta_n } -
 \frac{1}{\lambda_j(\underline{\bfSigmahat} ) - \lambda_1(\underline{\bfSigmahat} )}\right| 
 + \frac{1}{n \eta_n} 
\\ 
 & = \mathcal{K}_{1,n} + \mathcal{K}_{2,n} +o(1),
  \end{align*}
  where we define
  \begin{align*}
      \mathcal{K}_{1,n} & =  \frac{1}{n}\sum_{j : \lambdahat - \lambda_j(\bfSigmahatu) < n^{-1/3} } \left| 
 \frac{i\eta_n}{\lb \lambda_j(\underline{\bfSigmahat} ) - \lambda_1(\underline{\bfSigmahat} ) \rb \lb \lambda_j(\underline{\bfSigmahat} ) - \lambda_1(\underline{\bfSigmahat} ) - i \eta_n \rb } \right| , \\
  \mathcal{K}_{2,n} & = \frac{1}{n} \sum_{j : \lambdahat - \lambda_j(\bfSigmahatu) \geq n^{-1/3}}  \left| 
 \frac{i\eta_n}{\lb \lambda_j(\underline{\bfSigmahat} ) - \lambda_1(\underline{\bfSigmahat} ) \rb \lb \lambda_j(\underline{\bfSigmahat} ) - \lambda_1(\underline{\bfSigmahat} ) - i \eta_n \rb } \right| .
  \end{align*}
   To prove \eqref{goal1}, it remains to show that $\mathcal{K}_{1,n}$, and  $\mathcal{K}_{2,n}$ are both $ \op$. From the limit \eqref{eqn:jointTWlimit}, we have 
   $n^{-4/3}=\mathcal{O}_{\P}(( \lambda_1(\underline{\bfSigmahat} ) - \lambda_2(\underline{\bfSigmahat} ) ) ^2)$ and so Proposition \ref{prop_order_hat_N_n} and $\delta \in (0,1/6)$ imply
   \begin{align*}
       \mathcal{K}_{1,n} \leq \frac{\hat N_n}{n} \frac{\eta_n}{\lb \lambda_1(\underline{\bfSigmahat} ) - \lambda_2(\underline{\bfSigmahat} ) \rb ^2 } 
       = \mathcal{O}_{\PR} \lb n^{-1/2} n^{-1+\delta} n^{4/3} \rb = \op. 
   \end{align*}
  Turning to $\mathcal{K}_{2,n}$, the constraint $\lambdahat - \lambda_j(\bfSigmahatu) \geq n^{-1/3}$  in the sum implies
   \begin{align*}
       \mathcal{K}_{2,n} & = \mathcal{O}_{\PR} \lb  \eta_n n^{2/3} \rb = \op. 
   \end{align*}
  Thus, \eqref{goal1} holds true, and it remains to show that
  \begin{align}
  \label{goal20}
  \su_n ( \lambda_1(\hat\bfSigma) + i \eta_n ) - \su_n^0 ( \lambda_1(\hat\bfSigma) + i \eta_n ) = \op. 
  \end{align}
 By Lemma \ref{thm_local_law}, it suffices to verify that for any choices of $\tau'>0$ and $\tau \in (0,1/6)$,  the constant $\delta\in(0,1/6)$ can be chosen so that $\eta_n=n^{-1+\delta}$ implies
 \begin{align}\label{eqn:domainevent}
 \P\Big(\lambdahat + i \eta_n \in \mathbf{D}_n(\tau, \tau')\Big)\to 1
 \end{align}
 as $n\to\infty$.
For any $\tau'>0$, the limit~\eqref{eqn:jointTWlimit} implies that the event 
\begin{align*}
    | \lambdahat - r_n | \leq \tau' 
\end{align*}
has probability approaching 1. So, if we take $\delta= \tau$ in the definition of $\eta_n$, then we have $n^{-1+\tau} = \eta_n \leq 1$ for all $n$. Thus, the limit \eqref{eqn:domainevent} holds, and the proof of \eqref{goal20} concludes. 
 \end{proof}

Before turning to the proof of Lemma \ref{lem1_xi}\eqref{eqn:xilim}, we need two auxiliary results, which will be proven in Section \ref{sec_proof_aux_results}. 
The following lemma shows that the critical thresholds $\xi_{n,0}$ and $\xi_{n,1}$ are asymptotically equivalent.   
 \begin{lemma} \label{lem_xin_xi1}
If assumptions \ref{ass_mp_regime} and \ref{ass_lsd} are satisfied, and if $\mathsf{H}_{0,n}$ holds for all large $n$, then 
\begin{align*}
0 < \xin{1} - \xi_{n,0}  \lesssim  \textstyle \frac{1}{p}.
\end{align*}
\end{lemma}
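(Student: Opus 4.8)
## Proof plan for Lemma~\ref{lem_xin_xi1}

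\textbf{Setup.} Recall that $\xi_{n,0}$ and $\xi_{n,1}$ are defined through the equations
\begin{equation*}
\frac{1}{p}\sum_{j=1}^p \Big(\frac{\lambda_j(\bfSigma)\xi_{n,0}}{1-\lambda_j(\bfSigma)\xi_{n,0}}\Big)^2 = \frac{n}{p}
\qquad\text{and}\qquad
\frac{1}{p}\sum_{j=2}^p \Big(\frac{\lambda_j(\bfSigma)\xi_{n,1}}{1-\lambda_j(\bfSigma)\xi_{n,1}}\Big)^2 = \frac{n}{p},
\end{equation*}
with $\xi_{n,0}\in(0,1/\lambda_1(\bfSigma))$ and $\xi_{n,1}\in(0,1/\lambda_2(\bfSigma))$. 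The plan is to introduce the function $g_k(\xi) = \frac1p\sum_{j=k+1}^p\big(\frac{\lambda_j(\bfSigma)\xi}{1-\lambda_j(\bfSigma)\xi}\big)^2$ on its natural interval, so that $g_0(\xi_{n,0}) = g_1(\xi_{n,1}) = n/p$. Since $g_1(\xi) = g_0(\xi) - \frac1p\big(\frac{\lambda_1(\bfSigma)\xi}{1-\lambda_1(\bfSigma)\xi}\big)^2 \le g_0(\xi)$ for $\xi$ in the common domain, and both $g_0$ and $g_1$ are strictly increasing on $(0,1/\lambda_1(\bfSigma))$, we immediately get $g_1(\xi_{n,0}) \le g_0(\xi_{n,0}) = n/p = g_1(\xi_{n,1})$, hence $\xi_{n,0} \le \xi_{n,1}$; strict inequality follows because $\lambda_1(\bfSigma)>0$ makes the omitted term strictly positive. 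This gives the left-hand inequality $0 < \xi_{n,1}-\xi_{n,0}$.

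\textbf{Upper bound via a mean-value argument.} For the quantitative bound, I would write
\begin{equation*}
0 = g_1(\xi_{n,1}) - g_1(\xi_{n,0}) - \tfrac1p\Big(\tfrac{\lambda_1(\bfSigma)\xi_{n,0}}{1-\lambda_1(\bfSigma)\xi_{n,0}}\Big)^2
\end{equation*}
wait, more carefully: from $g_0(\xi_{n,0}) = g_1(\xi_{n,1})$ we have $g_1(\xi_{n,1}) - g_1(\xi_{n,0}) = g_0(\xi_{n,0}) - g_1(\xi_{n,0}) = \frac1p\big(\frac{\lambda_1(\bfSigma)\xi_{n,0}}{1-\lambda_1(\bfSigma)\xi_{n,0}}\big)^2$. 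By the mean value theorem there is a point $\bar\xi \in (\xi_{n,0},\xi_{n,1})$ with $g_1'(\bar\xi)\,(\xi_{n,1}-\xi_{n,0}) = \frac1p\big(\frac{\lambda_1(\bfSigma)\xi_{n,0}}{1-\lambda_1(\bfSigma)\xi_{n,0}}\big)^2$, so
\begin{equation*}
\xi_{n,1}-\xi_{n,0} = \frac{1}{p\,g_1'(\bar\xi)}\Big(\frac{\lambda_1(\bfSigma)\xi_{n,0}}{1-\lambda_1(\bfSigma)\xi_{n,0}}\Big)^2.
\end{equation*}
It then remains to show that the numerator $\big(\frac{\lambda_1(\bfSigma)\xi_{n,0}}{1-\lambda_1(\bfSigma)\xi_{n,0}}\big)^2$ is $\mathcal{O}(1)$ and that $g_1'(\bar\xi)$ is bounded below by a positive constant, both uniformly in $n$; this will deliver $\xi_{n,1}-\xi_{n,0}\lesssim 1/p$.

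\textbf{Controlling the two factors.} For the numerator: under $\mathsf{H}_{0,n}$ the regularity conclusion of Proposition~\ref{thm_tw_law}, specifically $\lambda_1(\bfSigma)(\xi_{n,0}+\eta)\le 1$ for a fixed $\eta>0$, gives $\lambda_1(\bfSigma)\xi_{n,0} \le 1 - \eta\lambda_1(\bfSigma) < 1$, and since $\lambda_1(\bfSigma)$ is bounded (by~\ref{ass_lsd} the support of $H_n$ lies in a fixed compact interval, and $\lambda_1(\bfSigma)\to$ right edge of $\supp(H)$), the quantity $\lambda_1(\bfSigma)\xi_{n,0}/(1-\lambda_1(\bfSigma)\xi_{n,0})$ is bounded above by a fixed constant. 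For the derivative: $g_1'(\xi) = \frac2p\sum_{j=2}^p \lambda_j(\bfSigma)^2\,\xi/(1-\lambda_j(\bfSigma)\xi)^3$, which is positive and increasing, so $g_1'(\bar\xi) \ge g_1'(\xi_{n,0})$; by the defining equation $g_1(\xi_{n,0})$ is comparable to $g_0(\xi_{n,0}) = n/p \asymp 1$ and each term of $g_1'$ is the corresponding term of $g_1$ multiplied by $\frac{2}{\xi(1-\lambda_j(\bfSigma)\xi)}$, which — using again that $\xi_{n,0}\asymp 1$ from~\eqref{eq_xi_order_1} and that $\lambda_j(\bfSigma)\xi_{n,0}$ stays bounded away from $1$ uniformly — is bounded below by a positive constant. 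Hence $g_1'(\bar\xi)$ is bounded away from $0$ uniformly in $n$.

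\textbf{Expected main obstacle.} The delicate point is the uniform-in-$n$ lower bound on $g_1'(\bar\xi)$ (equivalently, ensuring $\xi_{n,0}$ and $\xi_{n,1}$ are not only $\asymp 1$ but bounded away from the pole $1/\lambda_j(\bfSigma)$ for the bulk eigenvalues in a way that survives the summation). This is exactly where Proposition~\ref{thm_tw_law} and the edge-regularity condition~\eqref{def_regular_edge} do the real work, so the argument must be careful to invoke those estimates rather than treat the bounds as obvious; everything else is elementary monotonicity and the mean value theorem.
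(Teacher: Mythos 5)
Your proof is correct and follows essentially the same approach as the paper's: both establish $\xi_{n,0}<\xi_{n,1}$ by monotonicity and then obtain the rate from a first-order comparison of the defining equations, controlling the relevant terms via the regularity facts from Proposition~\ref{thm_tw_law}. Where the paper uses the convexity inequality $f_j(\xi_{n,1})\geq f_j(\xi_{n,0})+f_j'(\xi_{n,0})(\xi_{n,1}-\xi_{n,0})$ summed over $j\geq 2$, you use the mean value theorem together with the observation that $g_1'$ is increasing to move the evaluation point back to $\xi_{n,0}$ --- which is the same estimate, since monotonicity of $g_1'$ is exactly convexity of $g_1$; your lower bound $g_1'(\xi_{n,0})\gtrsim g_1(\xi_{n,0})\asymp 1$ also differs in form from the paper's $f_j'(\xi_{n,0})\geq 2\lambda_j(\bfSigma)^2\xi_{n,0}$ plus $\frac1p\sum_{j\geq 2}\lambda_j(\bfSigma)^2\gtrsim 1$, but both are valid and rely on the same inputs ($\xi_{n,0}\asymp1$ and the support of $H_n$ bounded away from $0$).
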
 

The next lemma says that the sequences $(\xi_{n,k})_{n\in\N}$, $k\in \{0,1\},$ of critical thresholds are individually convergent.
\begin{lemma}\label{lem:xilim}
    Suppose that Assumptions~\ref{ass_mp_regime} and \ref{ass_lsd} hold, and that $\mathsf{H}_{0,n}$ holds for all large $n$.  Then, there is a value $\xi_0\in(0,\infty)$ such that
    \begin{equation}
        \lim_{n\to\infty}\xi_{n,k}\to \xi_0 
    \end{equation}
    holds for $k\in\{0,1\}$. Moreover, if $u$ denotes the rightmost endpoint of the support of $H$, then $\xi_0$ satisfies $u\xi_0\leq \frac{1}{1+\varepsilon}$.
\end{lemma}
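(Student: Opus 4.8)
\emph{Approach.} This statement is purely deterministic (no randomness enters), so the plan is a subsequence argument: show that $(\xi_{n,0})_n$ is trapped in a fixed compact subset of $(0,\infty)$, pass to the limit in the defining equation \eqref{eq_def_xi} with $k=0$ along an arbitrary convergent subsequence, and observe that the limiting equation has a unique solution; convergence of $(\xi_{n,1})_n$ to the same limit then follows immediately from Lemma~\ref{lem_xin_xi1}.

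I would first collect the inputs. Fix a compact $[a,b]\subset(0,\infty)$ containing $\operatorname{supp}(H_n)$ for all large $n$; then $\operatorname{supp}(H)\subseteq[a,b]$ and $u\in\operatorname{supp}(H)$ since the support is closed. Write $f_\xi(\lambda)=(\lambda\xi/(1-\lambda\xi))^2$, so that \eqref{eq_def_xi} with $k=0$ reads $\int f_{\xi_{n,0}}\,dH_n=n/p$, where $n/p\to 1/y\in(0,\infty)$ by \ref{ass_mp_regime}. Three facts will be used. \textbf{(i)} $\liminf_{n\to\infty}\lambda_1(\bfSigma)\ge u$: for any continuity point $t<u$ of the law $H$, the set $(t,\infty)$ is an open neighbourhood of $u\in\operatorname{supp}(H)$, hence $H_n((t,\infty))\to H((t,\infty))>0$ and so $\lambda_1(\bfSigma)>t$ for all large $n$; letting $t\uparrow u$ through continuity points gives the claim. \textbf{(ii)} $\lambda_1(\bfSigma)\xi_{n,0}\le\lambda_1(\bfSigma)\xi_{n,1}\le 1/(1+\varepsilon)$ for all large $n$, by $\mathsf{H}_{0,n}$ and $\xi_{n,0}<\xi_{n,1}$ from Lemma~\ref{lem_xin_xi1}. \textbf{(iii)} $\xi_{n,0}\asymp 1$ and $\xi_{n,1}\asymp 1$: the upper bound follows from (ii) and $\lambda_1(\bfSigma)\ge a$, while $\xi_{n,0}\to 0$ along a subsequence would force $\int f_{\xi_{n,0}}\,dH_n\to 0\neq 1/y$; this is also recorded in Proposition~\ref{thm_tw_law}.

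Now take an arbitrary subsequence; by Bolzano--Weierstrass and (iii) I may pass to a further subsequence along which $\xi_{n,0}\to\xi^*\in(0,\infty)$ and $\lambda_1(\bfSigma)\to L\in[a,b]$. By (i)--(ii), $u\le L$ and $u\xi^*\le L\xi^*\le 1/(1+\varepsilon)<1$, so $\xi^*\in(0,1/u)$; fix $\delta'>0$ with $(L+\delta')\xi^*<1$ and set $I=[a,L+\delta']$. For all large $n$ in the subsequence, $\operatorname{supp}(H_n)\cup\operatorname{supp}(H)\subseteq I$ and $\sup_{\lambda\in I}\lambda\xi_{n,0}<1$; moreover $(\xi,\lambda)\mapsto f_\xi(\lambda)$ is uniformly continuous on $\{|\xi-\xi^*|\le\rho\}\times I$ for $\rho>0$ small, whence $\sup_{\lambda\in I}|f_{\xi_{n,0}}(\lambda)-f_{\xi^*}(\lambda)|\to 0$, and $\int f_{\xi^*}\,dH_n\to\int f_{\xi^*}\,dH$ by weak convergence (extend $f_{\xi^*}|_I$ to a bounded continuous function on $\R$). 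Combining, $\int f_{\xi_{n,0}}\,dH_n\to\int f_{\xi^*}\,dH$, and since the left side equals $n/p\to 1/y$ we obtain $G(\xi^*)=1/y$, where $G(\xi):=\int f_\xi\,dH$ on $(0,1/u)$. Since $\xi\mapsto f_\xi(\lambda)$ is strictly increasing on $(0,1/\lambda)$ for each $\lambda>0$, the function $G$ is strictly increasing and continuous on $(0,1/u)$, so $\xi^*$ is the unique solution of $G=1/y$ there and in particular is independent of the subsequence. Hence $\xi_{n,0}\to\xi_0:=\xi^*\in(0,\infty)$, and then $\xi_{n,1}\to\xi_0$ by Lemma~\ref{lem_xin_xi1}. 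Finally, letting $n\to\infty$ in $(1+\varepsilon)\lambda_1(\bfSigma)\xi_{n,1}\le 1$ (valid under $\mathsf{H}_{0,n}$) and using (i) together with $\xi_{n,1}\to\xi_0$ gives $(1+\varepsilon)u\xi_0\le 1$, i.e.\ $u\xi_0\le 1/(1+\varepsilon)$. The main obstacle is the passage to the limit in \eqref{eq_def_xi}: the integrand $f_{\xi_{n,0}}$ has a pole at $1/\xi_{n,0}$ that, although separated from $\operatorname{supp}(H_n)$, may lie inside the fixed compact $[a,b]$; this is exactly why one first extracts a subsequence making $\lambda_1(\bfSigma)$ converge and works on the tighter interval $I$, on which all the $f_{\xi_{n,0}}$ are simultaneously continuous.
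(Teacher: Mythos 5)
Your proof is correct and follows essentially the same approach as the paper's: argue for $k=0$ first, reduce via boundedness of $(\xi_{n,0})$ to showing that all convergent subsequences have the same limit, pass to the limit in the defining equation \eqref{eq_def_xi} using weak convergence $H_n\Rightarrow H$ and the fact that $\mathsf H_{0,n}$ keeps $\lambda\xi_{n,0}$ bounded away from $1$, and invoke strict monotonicity of $\xi\mapsto\int(\lambda\xi/(1-\lambda\xi))^2\,dH$ on $(0,1/u)$ for uniqueness; the transfer to $k=1$ and the bound $u\xi_0\le 1/(1+\varepsilon)$ then come from Lemma~\ref{lem_xin_xi1} exactly as you say. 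The only cosmetic difference is that you extract a further subsequence along which $\lambda_1(\bfSigma)$ converges in order to get a uniform domain $I$ for the uniform-continuity argument, whereas the paper instead observes directly that $\lambda_1(\bfSigma)\xi_0'\le 1/(1+\varepsilon/2)$ eventually and uses boundedness/Lipschitz continuity of $h(x)=(x/(1-x))^2$ on $[0,1/(1+\varepsilon/2)]$; both routes accomplish the same interchange of limit and integral.
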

Based on these preparations, we are in the position to prove Lemma \ref{lem1_xi}\eqref{eqn:xilim}.

\begin{proof}[Proof of Lemma \ref{lem1_xi}\eqref{eqn:xilim}] 
By Lemma~\ref{lem_xin_xi1}, it is enough to prove the result in the case of $k=0$. Let $\tau_n = n^{-2/3 + \delta}$, where we recall the definition $\eta_n = n^{-1+\delta}.$ 
Then, we write
\begin{align*}
   \xi_{n,0} + \su_n^0 (\lambdahat + i \eta_n) 
    &= \su_n^0 (\lambdahat + i \eta_n) - \su_n^0(r_n) \\ 
    & = \int_0^{r_n} \frac{ \lambdahat + i \eta_n - r_n }{\lb  \lambda - r_n \rb \lb  \lambda - \lambdahat - i \eta_n \rb } d \underline{F}^{y_n,H_n} (\lambda) \\
    & = \mathcal{J}_{1,n} + \mathcal{J}_{2,n} + \mathcal{J}_{3,n},
\end{align*}
where the terms $\mathcal{J}_{1,n}$, $\mathcal{J}_{2,n}$, $\mathcal{J}_{3,n}$ correspond to decomposing the integral over $[0,r_n]$ into the three intervals $[0,r_n-c]$, $[r_n-c,r_n-\tau_n]$, and $[r_n-\tau_n,r_n]$,
with $c>0$ being the constant from Lemma \ref{lem_sqrt_mp_density}.
For the first of these terms, the limit~\eqref{eqn:jointTWlimit} implies that there is some constant $c'>0$ such that the event 
$$
\Omega_{1,n} = \bigg\{ 
|\lambda - \lambdahat| \geq | r_n - c - \lambdahat| \geq  c' \textnormal{ for all } \lambda \in [0, r_n-c] \bigg\} $$ 
has probability approaching 1 as $n\to\infty$. This gives
\begin{align*}
    | \mathcal{J}_{1,n} | & = | \mathcal{J}_{1,n} | 1 \{ \Omega_{1,n} \}  + \op\\
    &\lesssim | \lambdahat + i \eta_n - r_n | + \op\\
    &= \op. 
\end{align*}
Regarding $\mathcal{J}_{2,n}$, we define the event
\begin{align*}
    \Omega_{2,n} = \{ | \lambda - \lambdahat | \geq | r_n - \tau_n - \lambdahat | \textnormal{ for all } \lambda \in [r_n - c, r_n -\tau_n] \}.
\end{align*}
By the definition of $\tau_n$ and the limit \eqref{eqn:jointTWlimit}, we see that $\lambdahat \geq r_n - \tau_n$ with probability converging to 1 and thus, $\PR \lb \Omega_{2,n} \rb = 1 - o(1).$  Moreover, we get as a consequence of the limit \eqref{eqn:jointTWlimit} and the definitions of $\tau_n$ and $\eta_n$ that
\begin{align} \label{eq_estimate_ratio}
      \lambdahat  - r_n  = o_{\PR}(\tau_n), \quad  \eta_n = o(\tau_n), \quad  \left| \tau_n\inv \lb r_n  - \lambdahat \rb  - 1 - \tau_n\inv i \eta_n  \right| = 1 + \op .
\end{align}
Consequently, we have
\begin{align*}
    | \mathcal{J}_{2,n} | & =  | \mathcal{J}_{2,n} | 1 \{ \Omega_{2,n} \} + \op \\ & 
    \lesssim \left| \frac{\lambdahat + i \eta_n - r_n}{  r_n - \tau_n - \lambdahat - i \eta_n } \right| \int_{r_n - c}^{r_n - \tau_n} \frac{ 1 }{ r_n - \lambda   } d \underline{F}^{y_n,H_n} (\lambda)
    + \op \\
    & \lesssim \left| \frac{\lambdahat + i \eta_n - r_n}{  r_n - \tau_n - \lambdahat - i \eta_n } \right| \int_{r_n - c}^{r_n - \tau_n} \frac{1}{\sqrt{r_n - \lambda}} d  \lambda  
    + \op \ \ \ \ \  \ \textup{(Lemma \ref{lem_sqrt_mp_density})} \\
    & \lesssim \left| \frac{\lambdahat + i \eta_n - r_n}{  r_n - \tau_n - \lambdahat - i \eta_n } \right| 
    + \op \\
    & = 
     \frac{ \tau_n\inv \left| \lambdahat + i \eta_n - r_n \right| }{  \left| \tau_n\inv \lb r_n  - \lambdahat \rb  - 1 - \tau_n\inv i \eta_n \right| }
    + \op\\
    &= \op,
\end{align*}
where we used \eqref{eq_estimate_ratio} for the last step, implying that the numerator is of order $\op$, whereas the denominator is of order $1+\op$. 

It is left to show that $\mathcal{J}_{3,n} = \op.$ Using Lemma \ref{lem_sqrt_mp_density}, we see that
\begin{align*}
    | \mathcal{J}_{3,n} | & \lesssim \left| \frac{ \lambdahat + i \eta_n - r_n}{\eta_n} \right| \int_{r_n - \tau_n}^{r_n } \frac{1 }{r_n - \lambda  } d \underline{F}^{y_n,H_n} (\lambda) \\
    & \lesssim \left| \frac{ \lambdahat + i \eta_n - r_n}{\eta_n} \right| \int_{r_n - \tau_n}^{r_n } \frac{1 }{ \sqrt{r_n - \lambda } } d \lambda \\ 
    & = \left| \frac{ \sqrt{\tau_n} \lb \lambdahat + i \eta_n - r_n\rb }{\eta_n} \right| 
    = \mathcal{O}_{\PR} \lb n^{-2/3} \tau_n^{1/2} \eta_n^{-1} \rb = \mathcal{O}_{\PR} \lb n^{\delta/2 - \delta} \rb = \op.
\end{align*}
\end{proof}

\begin{proof}[Proof of Proposition \ref{thm_consistency_xi}\eqref{eqn:hatHthm}]

 For any probability distributions $\mu$ and $\nu$ on $\R$, define the bounded Lipschitz metric as
\begin{equation}
    d_{\textup{BL}}(\mu,\nu)=\sup_{f\in\textup{BL}}\Bigg|\int f(x)d\mu(x)-\int f(x)d\nu(x)\Bigg|, \label{def_BL_metric}
\end{equation}
where $\textup{BL}$ is the class of 1-Lipschitz functions $f:\R\to\R$ such that $\sup_{x\in\R}|f(x)|\leq 1$.
It is a well known fact that a sequence of probability distributions $\{\mu_n\}$ on $\R$ satisfies $\mu_n\xrightarrow{\D}\nu$ as $n\to\infty$ if and only if $d_{\textup{BL}}(\mu_n,\nu)=o(1)$. So, to prove Proposition~\ref{thm_consistency_xi}(b) it is enough to show that $d_{\textup{BL}}(\tilde H_n, H)=o_{\P}(1)$. Letting
$$H_{n,Q}(t)=\frac{1}{p}\sum_{j=1}^p 1\{\tilde \lambda_{j,Q}\leq t\}$$
denote the QuEST estimator of $H$,
it is known from~\citep[][p.382]{ledoit2015spectrum} that $d_{\textup{BL}}(\tilde H_{n,Q},H)=o_{\P}(1)$, and so the remainder of the proof will focus on showing that $d_{\textup{BL}}(\tilde H_n,\tilde H_{n,Q})=o_{\P}(1)$.

Letting $\hat b_n=1/(\hat\xi_n(1+\varepsilon))$ and letting $u$ denote the right endpoint of the support of $H$, Lemma~\ref{lem:xilim} and Proposition~\ref{thm_consistency_xi}(a) ensure there is some number $b_0\geq u$ such that $\hat b_n\to b_0$ in probability. Now, consider the basic inequality

\begin{align*}
   d_{\textup{BL}}(\tilde H_n,\tilde H_{n,Q}) 
    &\leq \sup_{f\in\textup{BL}}\bigg|\frac{1}{p}\sum_{j=1}^p f(\tilde \lambda_{j,Q}\wedge \hat b_n)-f(\tilde \lambda_{j,Q}\wedge b_0)\bigg| 
    +\sup_{f\in\textup{BL}}\bigg|\frac{1}{p}\sum_{j=1}^p f(\tilde \lambda_{j,Q}\wedge b_0)-f(\tilde\lambda_{j,Q})\bigg|\\
    &=: \sup_{f\in\textup{BL}}\textup{I}_n(f)+\sup_{f\in\textup{BL}}\textup{II}_n(f).
\end{align*}
For any fixed $\delta>0$ and $f\in\textup{BL}$, the quantity $\textup{I}_n(f)$ can be bounded as
\begin{align}
    \textup{I}_n(f)&=\textup{I}_n(f)\cdot 1\{|\hat b-b_0|\leq \delta\}+\textup{I}_n(f)\cdot 1\{|\hat b-b_0|>\delta\} \nonumber \\
    &\leq \delta + 2\cdot 1\{|\hat b-b_0|>\delta\}.  \nonumber
\end{align}
 Since $\hat b_n\to b_0$ in probability and $\delta$ may be taken arbitrarily small, we conclude that $\sup_{f\in\textup{BL}}\textup{I}_n(f)=o_{\P}(1)$.

To handle $\textup{II}_n(f)$, observe that $f(\tilde\lambda_{j,Q}\wedge b_0)$ and $f(\tilde\lambda_{j,Q})$ can only disagree when $\tilde\lambda_{j,Q}>b_0$, and so
\begin{equation}
    \textup{II}_n(f)=\bigg|\frac{1}{p}\sum_{j=1}^p (f(b_0)-f(\tilde\lambda_{j,Q}))\cdot 1\{\tilde\lambda_{j,Q}>b_0\}\bigg|. \label{eq_term_II_n}
\end{equation}
Next, let $\delta>0$ be fixed, and consider decomposing $1\{\tilde \lambda_{j,Q}>b_0\}$ into $1\{\tilde \lambda_{j,Q}>b_0+\delta\}+1\{b_0+\delta\geq \tilde \lambda_{j,Q}>b_0\}$. Thus, for any $f\in\textup{BL}$, we have
\begin{align*}
    \textup{II}_n(f)&\leq \Bigg(\frac{2}{p}\sum_{j=1}^p 1\{\tilde\lambda_{j,Q}>b_0+\delta\}\Bigg)+ \delta \nonumber \\
    &= 2[1-\tilde H_{n,Q}(b_0+\delta)]+\delta,
\end{align*}
where $\tilde H_{n,Q}$ is viewed as a distribution function.
Since $b_0\geq u$, it follows that $b_0+\delta$ is a continuity point of $H$ with $H(b_0+\delta)=1$. So, the consistency of QuEST implies $\tilde H_{n,Q}(b_0+\delta)\to 1$ in probability. Moreover, since $\delta$ is arbitrary, we have $\sup_{f\in\textup{BL}}\textup{II}_n(f)=o_{\P}(1)$. 
\end{proof}

 Next, we provide the proof of the assertions regarding the estimator of the scale parameter given in Proposition \ref{thm_consistency_xi}\eqref{eqn:hatsigmathm}. Its proof relies crucially on the consistency of $\hat\xi_n$ and $\tilde H_n$ given in Proposition \ref{thm_consistency_xi}\eqref{eqn:hatxithm} and Proposition \ref{thm_consistency_xi}\eqref{eqn:hatHthm}.
 
 \begin{proof}[Proof of Proposition \ref{thm_consistency_xi}\eqref{eqn:hatsigmathm}]

By \eqref{eq_sigma_order_1}, we have $\sigma_n\asymp 1$, and so in order to show $\hat\sigma_n/\sigma_n=o_{\P}(1)$, it is enough to show $\hat\sigma_n-\sigma_n=o_{\P}(1)$, which is in turn implied by $\hat\sigma_n^3-\sigma_n^3=o_{\P}(1)$ (using a first-order Taylor expansion of the cube root function at $\sigma_n^3$).
Moreover, since Lemma~\ref{lem_xin_xi1}, Lemma~\ref{lem:xilim}, and Proposition \ref{thm_consistency_xi}\eqref{eqn:hatxithm} show that there is a positive value $\xi_0$ such that $\hat\xi_n=\xi_0+o_{\P}(1)$ and $\xi_{n,0} = \xi_0+o(1)$, the limit $\hat\sigma_n^3-\sigma_n^3=o_{\P}(1)$ reduces to showing that the quantity
\begin{equation}
  \hat\Delta_n:=  \int \Big(\ts\frac{\lambda \hat\xi_n}{1-\lambda\hat\xi_n}\Big)^3 d\tilde H_n(\lambda) - \displaystyle\int\Big(\ts\frac{\lambda \xi_{n,0}}{1-\lambda\xi_{n,0}}\Big)^3 dH_n(\lambda)
\end{equation}
satisfies $\hat\Delta_n=o_{\P}(1)$.

To proceed, define the event $E_n=\{\tilde\lambda_1\xi_0\leq \frac{1}{1+\varepsilon/2}\}$, and write $f(x)=(x/(1-x))^3$ as a temporary shorthand so that we may upper bound $|\hat\Delta_n|$ as
\begin{align}
       |\hat\Delta_n| & \ \leq  \  1\{E_n^c\}\!\!\int f(\lambda\hat\xi_n)d\tilde H_n   \ + \  1\{E_n\}\bigg|\int \big(f(\lambda\hat\xi_n)-f(\lambda\xi_0)\big)d\tilde H_n(\lambda)\bigg|\\
       &  \ \ \ \ + \ \bigg| 1\{E_n\}\!\!\int f(\lambda\xi_0)d\tilde H_n(\lambda)
       -\int f(\lambda \xi_0)dH_n(\lambda)\bigg|\\
       & =: \textup{A}_n+  \textup{B}_n +  \textup{C}_n
\end{align}
With regard to the term $\textup{A}_n$, note that $f(\tilde\lambda_j\hat\xi_n)$ is upper bounded by $f(\frac{1}{1+\varepsilon})$ for all $j$ and $n$. Also, the definition of $\tilde\lambda_1$ ensures that 
$\tilde\lambda_1
\leq \frac{1}{\hat\xi_n(1+\varepsilon)}$, and so
$$\P(E_n)\geq \P\Big(\ts\frac{1}{\hat\xi_n(1+\varepsilon)}\leq \frac{1}{\xi_0(1+\varepsilon/2)}\Big).$$
Furthermore, the probability on the right tends to 1 due to the limit $\hat\xi_n\xrightarrow{\P} \xi_0$, which implies $\P(E_n)\to 1$. Hence, we conclude that $\textup{A}_n=o_{\P}(1)$.

Turning to the term $\textup{B}_n$, note that $\tilde\lambda_j\hat\xi_n$ and $\tilde \lambda_j\xi_0$ both reside in the interval $[0,\frac{1}{1+\varepsilon/2}]$ when the event $E_n$
holds, and that $f$ is has a $\mathcal{O}(1)$-Lipschitz constant on this interval. Thus,
\begin{equation}
    \textup{B}_n \ \lesssim \ 1\{E_n\}\!\int \lambda |\hat\xi_n-\xi_0| d\tilde H_n(\lambda) \ \leq \  \frac{|\hat\xi_n-\xi_0|}{\xi_0(1+\varepsilon/2)} \ = \ o_{\P}(1).
\end{equation} 

To handle the term $\textup{C}_n$, define the function $g(x)$ to agree with $f(x)$ for all $x\in [0,\frac{1}{1+\varepsilon/2}]$ and satisfy 
 $g(x)=f(\frac{1}{1+\varepsilon/2})$ for all $x>\frac{1}{1+\varepsilon/2}$. Hence, $g$ is a bounded continuous function on the non-negative real-line. Furthermore, the quantities $1\{E_n\}f(\lambda\xi_0)$ and $1\{E_n\}g(\lambda\xi_0)$ agree for all $\lambda$ in the support of $\tilde H_n$. So, by combining $1\{E_n\}\xrightarrow{\P}1$ with the limit $\tilde H_n\xrightarrow{\D}H$ in probability (by Proposition \ref{thm_consistency_xi}\eqref{eqn:hatHthm}), we have
\begin{equation}
    1\{E_n\}\!\int f(\lambda\xi_0)d\tilde H_n(\lambda) \xrightarrow{\P} \int g(\lambda\xi_0)dH(\lambda).
\end{equation}
 Finally, the reasoning in~\eqref{eqn:intermediate} shows that $f(\lambda\xi_0)$ and $g(\lambda\xi_0)$ agree for all $\lambda$ in the support of $H_n$, and so the weak limit $H_n\xrightarrow{\D}H$ under~\ref{ass_lsd} implies that $\int f(\lambda\xi_0)dH_n(\lambda)$ converges to $\int g(\lambda\xi_0)dH(\lambda)$ as $n\to\infty$. Therefore $\textup{C}_n=o_{\P}(1)$, which completes the proof.
\end{proof}

Next, we establish the validity of the confidence intervals in Corollary \ref{cor:CI}. 
\begin{proof}[Proof of Corollary \ref{cor:CI}]
Let $\breve \varepsilon>0$ be defined by $1/ ( 1 + \breve\varepsilon) = \lambda_1(\bfSigma) \hat\xi_n.$ As a preparatory step, we want to show that
\begin{align} \label{aim3}
    T_n (\breve\varepsilon) \cond \zeta - \zeta', \quad n\to\infty. 
\end{align}
For each $j\in\{1,\dots,p\}$, let
\begin{align*}
    \breve\lambda_j = \tilde \lambda_{j,Q}  \wedge \frac{1}{\hat\xi_n ( 1 + \breve \varepsilon) }  = \tilde \lambda_{j,Q}  \wedge \lambda_1(\bfSigma)
\end{align*}
and define the associated distribution spectral distribution function
\begin{align*}
    \breve H_n(t) = \frac{1}{p} \sum_{j=1}^p 1\{\breve\lambda_j\leq t\}.
\end{align*}
Using an argument analogous to the proof of Proposition~\ref{thm_consistency_xi}(b), it can be shown that as $n\to\infty$,
\begin{align} \label{consis_breve_Hn}
    \breve H_n \cond H \quad \textnormal{in probability}.
\end{align}

Next, if we let  
\begin{align*}
    \breve \sigma_n^3   =  \frac{1}{\hat\xi_n^3 } \lb 1 +  y_n \int \lb \frac{\lambda \hat\xi_n }{1 - \lambda\hat\xi_n } \rb^3 d\breve{H}_{n}(\lambda)\rb,
\end{align*}
then an argument similar to the proof of Proposition\ref{thm_consistency_xi}\eqref{eqn:hatsigmathm} can be used to show that
\begin{align}\label{eqn:sigmabrevlim}
    \breve \sigma_n - \sigma_n = \op.
\end{align}
(In adapting that proof, note that $\breve\lambda_{j} \leq \lambda_1(\bfSigma)$, and that Proposition \ref{thm_consistency_xi}\eqref{eqn:hatxithm} implies that the inequality  $\lambda_1(\bfSigma)  \hat\xi_n (1+\varepsilon') < 1$ holds with probability tending to 1 for some $\varepsilon'>0$.)
Applying Proposition \ref{thm_tw_law} and using the limit~\eqref{eqn:sigmabrevlim}, we obtain
\begin{align*}
    T_n(\breve\varepsilon) = \frac{n^{2/3}}{\breve\sigma_n} ( \lambdahat - \lambda_2(\hat\bfSigma) ) \cond \zeta - \zeta',
\end{align*}
which proves the preparatory limit \eqref{aim3}.

To conclude the proof, note that
    \begin{align*}
      \PR \lb \lambda_1(\bfSigma) \leq \frac{1}{( 1 + \hat\varepsilon )  \hat\xi_n  } \rb
         &=  \PR (\hat\varepsilon \leq \breve \varepsilon )  \\
         &= \PR ( T_n\inv (q_{1-\alpha} ) \leq \breve\varepsilon ).
    \end{align*}
   Recalling that $\hat\varepsilon=T_n^{-1}(q_{1-\alpha})=\inf\{\varepsilon>0:T_n(\varepsilon)>q_{1-\alpha}\}$, there are two possibilities to consider, depending on whether the set in the infimum is empty, i.e.~whether or not $T_n^{-1}(q_{1-\alpha})$ is finite. Since the event $T_n^{-1}(q_{1-\alpha})\leq \breve\varepsilon$ cannot occur when $T_n^{-1}(q_{1-\alpha})=\infty$, we have
    \begin{align*}
      \PR \lb \lambda_1(\bfSigma) \leq \frac{1}{( 1 + \hat\varepsilon )  \hat\xi_n  } \rb
         &= \PR \Big(\big\{T_n\inv (q_{1-\alpha} ) \leq \breve\varepsilon\big\}\cap\big\{T_n\inv (q_{1-\alpha} )<\infty\big\}\Big).
    \end{align*}
    Moreover, since the function $T_n(\cdot)$ is continuous, we have $q_{1-\alpha}\leq T_n(T_n^{-1}(q_{1-\alpha}))$  in the case that $T_n^{-1}(q_{1-\alpha})$ is finite. Combining this with the fact that $T_n(\cdot)$ is non-decreasing, it follows that 
        \begin{align*}
      \PR \lb \lambda_1(\bfSigma) \leq \frac{1}{( 1 + \hat\varepsilon )  \hat\xi_n  } \rb
         &\leq \PR \big(q_{1-\alpha}\leq T_n(\breve \varepsilon)\big)\\
         &=\alpha+o(1),
    \end{align*}
    where the limit~\eqref{aim3} is used in the last step. This completes the proof.
\end{proof}

\subsection{Proofs of Proposition \ref{thm_tw_law}, Lemma \ref{lem_xin_xi1} and Lemma \ref{lem:xilim}} \label{sec_proof_aux_results}

\begin{proof}[Proof of Proposition \ref{thm_tw_law}] 
We first verify \eqref{eq_xi_order_1}. On one hand, \ref{ass_lsd} implies
$$\xi_{n,1} \leq 1/\lambda_{2}(\bfSigma) \lesssim 1.$$ 
On the other hand, the inequality $\xi_{n,0} \gtrsim 1$ follows from $\int (\lambda\xi_{n,0}/(1-\lambda\xi_{n,0}))^2dH_n(\lambda)=1/y_n$ and $\lambda_1(\bfSigma)\lesssim 1$. Hence, it is enough to check the inequality $\xi_{n,1}>\xi_{n,0}$. For this purpose, define the functions
$f_j:[0,1/\lambda_j(\bfSigma))\to \R$ for $j=1,\dots,p$ according to
\begin{equation}\label{eqn:fjdef}
   f_j(x)=\Big(\frac{\lambda_j(\bfSigma)x}{1-\lambda_j(\bfSigma)x}\Big)^2. 
\end{equation}
From the definitions of $\xi_{n,0}$ and $\xi_{n,1}$, it is straightforward to check that they satisfy the equation 
$$\sum_{j=2}^p \big(f_j(\xin{1})-f_j(\xi_{n,0})\big) = f_1(\xi_{n,0}).$$
Since each function $f_j$ is monotone increasing and $f_1(\xi_{n,0})$ is positive, it follows that $\xin{1}>\xi_{n,0}$.  Thus, assertion \eqref{eq_xi_order_1} is true.

The first condition in \eqref{def_regular_edge} follows from \cite[(2.19)]{bao2013local}.  The second condition in \eqref{def_regular_edge} follows from $\mathsf{H}_{0,n}$ and the inequality $\xi_{n,1}>\xi_{n,0}$. Finally, \eqref{eq_sigma_order_1} and~\eqref{eqn:rnorder1}  follow from \eqref{eq_xi_order_1} under $\mathsf{H}_{0,n}$.
\end{proof}

\begin{proof}[Proof of Lemma \ref{lem_xin_xi1}] The inequality $\xi_{n,1}>\xi_{n,0}$ was already shown in the proof of Proposition~\ref{thm_tw_law}, and so it is only necessary to show $\xi_{n,1}-\xi_{n,0}\lesssim 1/p$.
Let the functions $f_1,\dots,f_p$ be as defined as in~\eqref{eqn:fjdef}.
Due to the convexity of each $f_j$, we must have 
$$f_j(\xin{1})\geq f_j(\xi_{n,0})+f_j'(\xi_{n,0})(\xin{1}-\xi_{n,0})$$
and so 
$$ (\xin{1}-\xi_{n,0}) \sum_{j=2}^p f_j'(\xi_{n,0}) \ \leq \ f_1(\xi_{n,0}).$$
Furthermore, we have
$f_j'(\xi_{n,0})=\frac{2\lambda_j(\bfSigma)^2\xi_{n,0}}{[1-\lambda_j(\bfSigma)\xi_{n,0}]^3}\geq 2\lambda_j(\bfSigma)^2\xi_{n,0}$, and thus
$$ (\xin{1}-\xi_{n,0}) \xi_{n,0}\sum_{j=2}^p 2\lambda_j(\bfSigma)^2 \ \leq \ f_1(\xi_{n,0}).$$
By $\mathsf{H}_{0,n}$ and $\xi_{n,1}>\xi_{n,0}$, we have $f_1(\xi_{n,0})\lesssim 1$, and recall $\xi_{n,0} \asymp 1$ from \eqref{eq_xi_order_1}. 
Combining these facts with assumption \ref{ass_lsd}, we have  
$\xin{1}-\xi_{n,0} \ \lesssim  \ \textstyle \frac{1}{p},$
which completes the proof. 
\end{proof}

\begin{proof}[Proof of Lemma \ref{lem:xilim}] By Lemma~\ref{lem_xin_xi1}, it is enough to prove the result in the case of $k=0$. By~\eqref{eq_xi_order_1}, the sequence $\{\xi_{n,0}\}$ is bounded, and so to prove that a limit exists, it is enough to show that all convergent subsequences of $\{\xi_{n,0}\}$ converge to the same value. To this end, let $J\subset \N$ be a subsequence along which $\xi_{n,0}$ converges to some limit $\xi_0'$. It suffices to show that $\xi_0'$ resides in the interval $(0,1/u)$ and solves the equation
\begin{equation}\label{eqn:limiteqn}
    \int \Big(\ts\frac{\lambda \xi_0'}{1-\lambda\xi_0'}\Big)^2 dH(\lambda)=y,
\end{equation}
because there is only one value of $\xi_0'\in (0,1/u)$ that can solve this equation.

To see that $\xi_0'$ must lie in $(0,1/u)$, note that~\ref{ass_lsd} implies 
\begin{align} \label{eq_u_leq_lambda1}
u\leq \limsup_{n\in J}\lambda_1(\bfSigma)
\end{align}
and so when $\mathsf{H}_{0,n}$ holds for all large $n$, we have
\begin{align}
u\xi_0'&\leq
\ts \limsup_{n\in J}\big(\lambda_1(\bfSigma)\xi_{n,0}\big)\nonumber \\
& \leq \ts\limsup_{n\in J}\big(\lambda_1(\bfSigma)\xi_{n,1}\big) \nonumber \\
&\leq\ts \frac{1}{1+\varepsilon}. \label{eqn:intermediate}
\end{align}
It is also useful to notice that this reasoning shows that $\lambda_1(\bfSigma)\xi_0'\leq \frac{1}{1+\varepsilon/2}$ holds for all large $n$ along $J$, which ensures that the integral $\int h(\lambda \xi_0')dH_n(\lambda)$ with $h(x)=(x/(1-x))^2$ is well defined for all large $n$ along $J$.

Next, observe that $h$ is a bounded continuous function on $[0,1/(1+\varepsilon/2)]$, and so the weak limit $H_n\xrightarrow{\D} H$ under~\ref{ass_lsd} implies $\int h(\lambda \xi_0')dH_n(\lambda)\to \int h(\lambda\xi_0')dH(\lambda)$ along $J$. Thus, to verify~\eqref{eqn:limiteqn}, it only remains to show that $\int h(\lambda\xi_0')dH_n(\lambda)\to y$. Since the definition of $\xi_{n,0}$ gives $\int h(\lambda\xi_{n,0})dH_n(\lambda)=y_n\to y$, it is enough to check that the integrals of $h(\lambda \xi_0')$ and $h(\lambda \xi_{n,0})$ under $H_n$ approach the same limit. This follows from the fact that $h$ is Lipschitz on $[0,1/(1+\varepsilon/2)]$ and $\lambda_1(\bfSigma)\lesssim1$ under~\ref{ass_lsd}.
\end{proof}

\subsection{Proofs for Section \ref{sec_alternative}} 
\label{sec_proof_thm_alternative} 
For the proof of Proposition \ref{prop_sub_cond} below, it is worth noting that the derivative $\psi'$ of the function $\psi$ defined in \eqref{eq_def_psi} is given by 
\begin{align}
    \label{eq_def_psi_deriv}
    \psi'(\beta) = 1 -  y  \int \frac{\lambda^2}{( \beta - \lambda ) ^2} dH(\lambda), \quad \beta\not\in \textup{supp}(H).
\end{align} 
\begin{proof}[Proof of Proposition \ref{prop_sub_cond}]%
Denote the empirical distribution function of $\lambda_{K+1}(\bfSigma), \ldots, \lambda_p(\bfSigma)$ by 
\begin{align} \label{eq_def_H_nK}
    H_{n,K+1}(t) = \frac{1}{p-K} \sum_{j=K+1}^p 1 \{ \lambda_j(\bfSigma) \leq t\}, \quad t\in\R,
\end{align}
   and for any $\beta\not\in \textup{supp}(H_{n,K+1})$ and $n\in\N$, define the function
\begin{equation} \label{eq_def_psi_n}
    \psi_{n}(\beta) = \beta +  \frac{p - K}{n} \beta \int \frac{\lambda}{\beta - \lambda} dH_{n,K+1}(\lambda).
\end{equation}
By definition of $\xi_{n,K}$, we know that 
\begin{align} \label{w1}
    \psi_n'(1/\xi_{n,K}) =0 \quad \textnormal{for all } n\in\N. 
\end{align}
Similarly to the proof of Lemma \ref{lem:xilim}, one can show that there exists some $\xi_{K}$ such that $\lim_{n\to\infty} \xi_{n,K} = \xi_{K}. $ 
Note that, due to $\mathsf{H}_{1,n}(K)$ and $H_{n,K+1}\cond H$, the derivative $\psi'$ is well defined at $1/\xi_K$ and $1/\xi_{n,K}$ for all large $n.$ Using $\mathsf{H}_{1,n}(K)$, we get
\begin{align} \label{z4}
    \left| \psi_n' (1/\xi_{n,K}) - \psi_n' (1/\xi_K) \right|
    \lesssim \left| \xi_{n,K} - \xi_K \right| =o(1),
\end{align}
and, from $H_{n,K+1} \cond H$,
\begin{align} \label{z5}
    \psi_n' (1/\xi_K) - \psi'(1/\xi_K) = o(1). 
\end{align}
Combining \eqref{w1}, \eqref{z4} and \eqref{z5} gives $  \psi'(1/\xi_{K})=0$. 
 Note that $\psi'(\beta)$ is strictly increasing for $\beta > 1/\xi_K$.
Since $\lambda_j(\bfSigma) \geq 1/(\xi_{K} (1 - \varepsilon) )$ for $j=1,\dots,K$, we conclude that 
\begin{align*}
    \psi'(\lambda_j(\bfSigma)) > \psi'(1/\xi_{K}) =0
\end{align*}
as needed.
\end{proof}

To provide the necessary context for our following analysis, we restate a central limit theorem for the sample eigenvalues in the supercritical case. To describe the covariance structure, we need to introduce further notation. 
Let $\bfSigma=\mathbf{U}\mathbf{\Lambda}\mathbf{U}\ttop$ be a spectral decomposition of $\bfSigma$, where $\mathbf{\Lambda}=\textup{diag}(\lambda_1(\bfSigma),\dots,\lambda_p(\bfSigma))$ and the columns of $\mathbf{U}$ are the eigenvectors of $\bfSigma$. If $K\in\N$ denotes the number of supercritical eigenvalues, then we may define 
\begin{align*}
   \varsigma_{j,n}^2 & =  ( \E(x_{11}^4) - 3)  \lambda^2_j(\bfSigma) \left\{ \psi'(\lambda_j(\bfSigma) ) \right\}^2 \sum_{k=1}^p u_{jk}^4 + 2 \lambda_j^2(\bfSigma) \psi'(\lambda_j(\bfSigma) ) ,  \\
   \varsigma_{j,j',n} & =  ( \E(x_{11}^4) - 3) 
   \lambda_j(\bfSigma) \lambda_{j'}(\bfSigma) \psi'(\lambda_j(\bfSigma) ) \psi'(\lambda_{j'}(\bfSigma) ) \sum_{k=1}^p u_{jk}^2 u_{j'k}^2,  \quad 1 \leq j,j' \leq K
   \end{align*}
   and for $1 \leq j < K,$
   \begin{align*}
   \gamma_{j,n}^2  & = \varsigma_{j,n}^2 + \varsigma_{j+1,n}^2 - 2 \varsigma_{j,j+1,n}.
\end{align*}
In the following, we provide a  central limit theorem for $\lambda_1(\bfSigmahat), \ldots, \lambda_K(\bfSigmahat).$ For this purpose, recall the definition of the function $\psi_n$ in \eqref{eq_def_psi_n}, and define the difference of  two consecutive sample eigenvalues as
\begin{align*}
    D_{n,j} =  \frac{ \sqrt{n} }{\gamma_{j,n}} \lb \lambda_j(\bfSigmahat) - \psi_n( \lambda_j(\bfSigma) )  \rb - \frac{ \sqrt{n} }{\gamma_{j,n}} \lb \lambda_{j+1}(\bfSigmahat) - \psi_n( \lambda_{j+1}(\bfSigma) ) \rb,
\end{align*}
which is scaled and shifted in a way such that $D_{n,j}$ satisfies a central limit theorem.
\begin{proposition}
    \label{thm_clt_sup}
    Let $K\in\N$.  Suppose that assumptions \ref{ass_mp_regime}-\ref{ass_lsd}  hold. Also, suppose that $\lambda_1(\bfSigma)>\dots>\lambda_K(\bfSigma)$ are fixed with respect to $n$, and that $\mathsf{H}_{1,n}(K)$ holds for all large $n$.
   Then, as $n\to\infty$, the following limit  holds for all $1 \leq j \leq K$
   \begin{align}\label{eqn:marginalspikelim}
   \frac{ \sqrt{n} }{ \varsigma_{j,n} } \lb \lambda_j(\bfSigmahat) - \psi_n(\lambda_j(\bfSigma)) \rb
   \cond \mathcal{N}(0,1).
   \end{align}
Also, if $\mathsf{H}_{1,n}(K)$ holds for $K\geq 2$, and if $1 \leq j < K$, then as $n\to\infty$,
   \begin{align}\label{eqn:diffspikelim}
    D_{n,j} 
    \cond \mathcal{N}(0,1).
      \end{align}
\end{proposition}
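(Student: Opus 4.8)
The plan is to derive both limits from the established central limit theory for supercritical (``spiked'') sample eigenvalues, after checking that the hypotheses of Proposition~\ref{thm_clt_sup} place us within its scope. The first preparatory step is to observe that, under $\mathsf{H}_{1,n}(K)$ with $\lambda_1(\bfSigma)>\dots>\lambda_K(\bfSigma)$ fixed, Proposition~\ref{prop_sub_cond} already gives $\psi'(\lambda_j(\bfSigma))>0$ for every $1\le j\le K$, so each of the $K$ leading population eigenvalues is genuinely supercritical in the usual random-matrix sense. In addition, the companion inequality $\lambda_{K+1}(\bfSigma)\le 1/[(1+\ve)\xi_{n,K}]$ in \eqref{def_alternative_general_NEW} confines the remaining population eigenvalues to the subcritical range with a fixed margin, so the empirical distribution $H_{n,K+1}$ from \eqref{eq_def_H_nK} still obeys \ref{ass_lsd}, the function $\psi_n$ from \eqref{eq_def_psi_n} is well defined at each $\lambda_j(\bfSigma)$ with $j\le K$ (since $\lambda_j(\bfSigma)>\lambda_{K+1}(\bfSigma)=\max\supp(H_{n,K+1})$), and the spike location $\psi_n(\lambda_j(\bfSigma))$ stays a distance bounded away from $0$ above the rightmost bulk edge. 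Combined with the moment growth condition in \ref{ass_mom} and the proportional regime \ref{ass_mp_regime}, these are exactly the inputs required by the CLTs for supercritical eigenvalues in \cite{paul2007asymptotics, bai2008central, cai_han_pan, jiang_bai, zhang2022asymptotic}, from which the marginal statement \eqref{eqn:marginalspikelim} follows with the asserted variance $\varsigma_{j,n}^2$ and the centering at the deterministic equivalent $\psi_n(\lambda_j(\bfSigma))$.

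For the difference \eqref{eqn:diffspikelim} I would upgrade the marginal limits to the \emph{joint} statement that $\big(\sqrt n(\lambda_j(\bfSigmahat)-\psi_n(\lambda_j(\bfSigma)))\big)_{1\le j\le K}$ converges to a centered $K$-dimensional Gaussian vector whose covariance matrix has diagonal entries $\varsigma_{j,n}^2$ and off-diagonal entries $\varsigma_{j,j',n}$ (the cited references state this jointly). Given the joint limit, $D_{n,j}$ is a fixed linear functional of the $j$-th and $(j+1)$-st coordinates rescaled by the deterministic scalar $\gamma_{j,n}$; since $\gamma_{j,n}^2=\varsigma_{j,n}^2+\varsigma_{j+1,n}^2-2\varsigma_{j,j+1,n}$ is precisely the asymptotic variance of that functional, the continuous mapping theorem yields $D_{n,j}\cond\mathcal N(0,1)$, provided the rescaling is non-degenerate. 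To guarantee this I would check $\gamma_{j,n}^2\asymp 1$: an upper bound is immediate from $\lambda_j(\bfSigma)\lesssim 1$, $\psi'(\lambda_j(\bfSigma))\in(0,1)$ (the latter from \eqref{eq_def_psi_deriv}) and $\sum_k u_{jk}^2=1$; for the lower bound, writing $a_j=\lambda_j(\bfSigma)\psi'(\lambda_j(\bfSigma))\ge 0$ and applying the Cauchy--Schwarz inequality to $\sum_k u_{jk}^2 u_{j+1,k}^2$ shows that the bracketed fourth-cumulant contribution to $\gamma_{j,n}^2$ is nonnegative, so $\gamma_{j,n}^2\ge 2\lambda_j^2(\bfSigma)\psi'(\lambda_j(\bfSigma))+2\lambda_{j+1}^2(\bfSigma)\psi'(\lambda_{j+1}(\bfSigma))>0$ when $\E(x_{11}^4)\ge 3$, while a slightly more careful variant using $\E(x_{11}^4)\ge 1$ together with $\psi'<1$ handles the case $\E(x_{11}^4)<3$. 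Since these bounds are uniform in $n$, one may argue along subsequences on which $\gamma_{j,n}$ and the $\varsigma$'s converge; the limit is $\mathcal N(0,1)$ on every such subsequence, hence overall.

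The main obstacle is bookkeeping rather than a new idea: the CLTs in the references are stated for somewhat different parameterizations of spiked models (finitely many spikes over a scaled identity, or the ``generalized spiked model''), so the real work lies in translating notation and confirming that our population structure---$H_n\to H$ with support a finite union of compact intervals in $(0,\infty)$, $K$ fixed and distinct spikes, and the $\varepsilon$-margins built into $\mathsf{H}_{1,n}(K)$---does meet their hypotheses, and that their limiting variances coincide with $\varsigma_{j,n}^2$ and $\varsigma_{j,j',n}$. A secondary subtlety that must be respected throughout is the centering: one has to subtract the finite-$n$ deterministic equivalent $\psi_n(\lambda_j(\bfSigma))$ rather than $\psi(\lambda_j(\bfSigma))$, because $\sqrt n\big(\psi_n(\lambda_j(\bfSigma))-\psi(\lambda_j(\bfSigma))\big)$ need not be negligible; phrasing the entire argument in terms of $\psi_n$, exactly as in \eqref{eq_def_psi_n}, is the cleanest way to avoid an extra bias term.
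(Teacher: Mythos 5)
Your proposal is correct and follows essentially the same route as the paper: both reduce the statement to the spiked-eigenvalue CLT of \cite[Theorem 2.2]{zhang2022asymptotic} (joint Gaussianity of $(\sqrt n(\lambda_j(\bfSigmahat)-\psi_n(\lambda_j(\bfSigma))))_{j\le K}$), use Proposition~\ref{prop_sub_cond} to certify $\psi'(\lambda_j(\bfSigma))>0$, and handle the possible non-convergence of $\varsigma_{j,n}$ and $\varsigma_{j,j+1,n}$ by exploiting the normalization (the paper notes this directly; you phrase it as a subsequence argument, which is the same mechanism). Your additional verification that $\gamma_{j,n}\asymp 1$, via Cauchy--Schwarz when $\E(x_{11}^4)\ge 3$ and via $\psi'<1$ otherwise, is a sound supplement that the paper leaves implicit.
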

\begin{proof}[Proof of Proposition \ref{thm_clt_sup}]
This result follows from a combination of \cite[Theorem 2.2]{zhang2022asymptotic} and its proof as well as Proposition \ref{prop_sub_cond}. 
Note that the paper \cite{zhang2022asymptotic} assumes that $\varsigma_{j,n}$ and $\varsigma_{j,j+1,n}$ have limits as $n\to\infty$. This assumption, however, is not necessary for the for the weak convergence of the difference $D_{n,j}$ stated in \eqref{eqn:diffspikelim}, due to the normalization with $\gamma_{j,n}$. Likewise, the central limit theorem for an individual supercritical sample eigenvalue $\lambda_j(\bfSigmahat)$, $1 \leq j \leq K$, in \eqref{eqn:marginalspikelim} holds after normalization with $\varsigma_{j,n}$, even if $\varsigma_{j,n}$ is not guaranteed to have a limit.
\end{proof}

 If the alternative hypothesis $\mathsf{H}_{1,n}(K)$ holds, we denote by $r_{n,K+1}$ the rightmost endpoint of the support of the \MP distribution 
\begin{align*}
   \underline{F}^{\frac{n-K}{p}, H_{n,K+1}} ,
\end{align*}
where $\underline{F}^{y_n,H_n}$ is defined through \eqref{eq_mp_stieltjes}, and $H_{n,K+1}$ is defined in \eqref{eq_def_H_nK}. (Note that for $K=0$ supercritical eigenvalues, we recover the definition $r_n = r_{n,1}$.)
The following result is a consequence of \cite[Theorem 3.2]{ding2021spiked}. 
\begin{lemma} \label{thm_alt_sub}
    Suppose that assumptions \ref{ass_mp_regime}-\ref{ass_lsd}  hold. Also, suppose that $\lambda_1(\bfSigma)>\dots>\lambda_K(\bfSigma)$ are fixed with respect to $n$, and that $\mathsf{H}_{1,n}(K)$ holds for all large $n$. Then, for every fixed $\eta>0$ and integer $j>K$, we have
    \begin{align*}
        n^{2/3 - \eta} \lb \lambda_{j}(\bfSigmahat) -r_{n,K+1} \rb = o_{\PR}(1). 
    \end{align*}
\end{lemma}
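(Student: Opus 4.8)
The plan is to derive the statement from \cite[Theorem 3.2]{ding2021spiked} by recognizing that, under $\mathsf{H}_{1,n}(K)$, the observed model is a spiked sample covariance model with exactly $K$ outlier population eigenvalues, namely $\lambda_1(\bfSigma)>\dots>\lambda_K(\bfSigma)$, sitting on top of a bulk described by the reduced spectral distribution $H_{n,K+1}$ from~\eqref{eq_def_H_nK}. Proposition~\ref{prop_sub_cond} guarantees that $\psi'(\lambda_j(\bfSigma))>0$ for every $1\leq j\leq K$, so these $K$ values are genuine supercritical outliers; they are moreover fixed in $n$ and pairwise separated by a constant gap. On the other side, the second half of $\mathsf{H}_{1,n}(K)$ asserts $\lambda_{K+1}(\bfSigma)\leq 1/[(1+\varepsilon)\xi_{n,K}]$, and the defining equation~\eqref{eq_def_xi} (with $k=K$) shows that $1/\xi_{n,K}$ is exactly the critical threshold attached to the reduced bulk $H_{n,K+1}$. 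Hence $\lambda_{K+1}(\bfSigma)$ lies strictly below that threshold with a margin that is uniform in $n$, which is precisely the subcriticality input required for the non-outlier edge eigenvalues to exhibit Tracy--Widom behavior around $r_{n,K+1}$.

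Before invoking the result, I would check that the reduced model still satisfies the standing assumptions: deleting the $K$ fixed eigenvalues changes $H_n$ by only $\mathcal{O}(K/p)$ mass, so $H_{n,K+1}$ shares the weak limit $H$ of $H_n$ under~\ref{ass_lsd}, keeps its support inside the same fixed compact subset of $(0,\infty)$, and the aspect-ratio and moment conditions~\ref{ass_mp_regime}--\ref{ass_mom} persist verbatim since $K$ is fixed. I would then reproduce, with $\xi_{n,0},H_n$ replaced by $\xi_{n,K},H_{n,K+1}$, the arguments of Proposition~\ref{thm_tw_law} and Lemma~\ref{lem:xilim}---this is legitimate because the proof of Proposition~\ref{prop_sub_cond} already established $\xi_{n,K}\asymp 1$ and $\xi_{n,K}\to\xi_K$ for some $\xi_K\in(0,\infty)$---to conclude that the rightmost edge $r_{n,K+1}$ is regular with a parameter $\eta_0>0$ independent of $n$; the key inequality $\lambda_{K+1}(\bfSigma)(\xi_{n,K}+\eta_0)\leq 1$ is exactly what the $(1+\varepsilon)$-margin in $\mathsf{H}_{1,n}(K)$ provides.

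With the $K$ spikes supercritical and well-separated and the bulk edge $r_{n,K+1}$ uniformly regular, \cite[Theorem 3.2]{ding2021spiked} applies and yields, for each fixed integer $j>K$, that $n^{2/3}\bigl(\lambda_j(\bfSigmahat)-r_{n,K+1}\bigr)$ is stochastically bounded (in fact it converges to a Tracy--Widom-type law, in parallel with~\eqref{eqn:jointTWlimit}). Therefore, for any fixed $\eta>0$,
\begin{align*}
  n^{2/3-\eta}\bigl(\lambda_j(\bfSigmahat)-r_{n,K+1}\bigr)=n^{-\eta}\cdot\mathcal{O}_{\PR}(1)=o_{\PR}(1),
\end{align*}
which is the claim.

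I expect the main obstacle to be bookkeeping rather than probabilistic: the delicate estimates are entirely supplied by \cite{ding2021spiked}, so the remaining work is to match our normalization---the parameters $\xi_{n,K}$, $r_{n,K+1}$, the reduced distribution $H_{n,K+1}$, and the aspect ratio used in its defining \MP equation---to the precise hypotheses and conclusion of \cite[Theorem 3.2]{ding2021spiked}, and in particular to confirm that the $\varepsilon$-separation in $\mathsf{H}_{1,n}(K)$ upgrades to an edge-regularity parameter that does not depend on $n$, so that the constants hidden in the rigidity bound stay under control.
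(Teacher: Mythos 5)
Your proposal is correct and follows essentially the same route as the paper, which simply records the lemma as a consequence of \cite[Theorem 3.2]{ding2021spiked}; you supply the bookkeeping—identifying the reduced bulk $H_{n,K+1}$, checking that \ref{ass_mp_regime}--\ref{ass_lsd} persist after deleting the $K$ fixed spikes, using Proposition~\ref{prop_sub_cond} for supercriticality of the outliers, and extracting uniform edge regularity from the $(1+\varepsilon)$-margin in $\mathsf{H}_{1,n}(K)$—that the paper leaves implicit. One small caution: you assert that $n^{2/3}(\lambda_j(\bfSigmahat)-r_{n,K+1})$ is $\mathcal{O}_{\PR}(1)$ and even converges to a Tracy--Widom-type law, but the phrasing of the lemma with a sacrificed $n^{\eta}$ suggests the cited theorem delivers only an eigenvalue-rigidity bound with an arbitrarily small polynomial loss, i.e.\ $|\lambda_j(\bfSigmahat)-r_{n,K+1}|=\mathcal{O}_{\PR}(n^{-2/3+\epsilon'})$ for every $\epsilon'>0$; that weaker input still gives $n^{2/3-\eta}(\lambda_j(\bfSigmahat)-r_{n,K+1})=o_{\PR}(1)$ by taking $\epsilon'<\eta$, so the conclusion stands either way.
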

With these preparations, we are now in the position to give a proof for Theorem \ref{thm_alternative}. 
\begin{proof}[Proof of Theorem \ref{thm_alternative}]
Recall the definition of the function $\psi_n$ in \eqref{eq_def_psi_n}. 
First, we note that
\begin{align} \label{eq_gap_psi_r}
    \psi_n(\lambda_K(\bfSigma)) - r_{n,K+1}    
    \gtrsim 1
\end{align}
for all large $n$. 
To see this, we obtain from the fact $r_{n,K+1} = \psi_n(1/\xi_{n,K})$ (see, e.g., Lemma 6.2 in \cite{bai2004}) and $\mathsf{H}_{1,n}(K)$ that
\begin{align*}
    \psi_n(1/\xi_{n,K}) & = \frac{1}{\xi_{n,K}} + \frac{p-K}{n} \frac{1}{\xi_{n,K}} \int \frac{\lambda }{1/\xi_{n,K} - \lambda} d H_{n,K+1} (\lambda) 
     \\ &\leq \lambda_1(\bfSigma) ( 1-\varepsilon) + \frac{p-K}{n} \lambda_1(\bfSigma) \int \frac{\lambda }{\lambda_1(\bfSigma) - \lambda} d H_{n,K+1} (\lambda) \\
    &\lesssim \psi_n (\lambda_1(\bfSigma)) - 1.
\end{align*}
 As a further preparation, we show that 
    \begin{align}
        \label{sigma_hat_bound}
       \hat\sigma_n = \mathcal{O}_{\PR}(1).
    \end{align}
       We know from the definition of $\tilde H_n$ that
        \begin{align*}
           0 \leq  \int \bigg( \frac{\lambda \hat\xi_n }{1 - \lambda\hat\xi_n } \bigg)^3 d\tilde{H}_{n}(\lambda)
           \lesssim 1.
        \end{align*}
        Thus, to prove \eqref{sigma_hat_bound}, it is sufficient to show that $1/\hat\xi_n = \mathcal{O}_{\PR}(1).$
       This follows from the fact that $\lambda_1(\hat\bfSigma)$ and $\lambda_2(\hat\bfSigma)$ are asymptotically separated, recalling \eqref{eq_gap_psi_r}. Indeed, under $\mathsf{H}_{1,n}(K)$ with $K=1$, we have by Lemma \ref{thm_alt_sub} and Proposition \ref{thm_clt_sup},
    \begin{align*}
        \lambda_2(\hat\bfSigma) - r_{n,2} = \op, \quad 
        \lambda_1(\hat\bfSigma) - \psi_n(\lambda_1(\bfSigma)) = \op. 
    \end{align*}
    Moreover, if $K>1$, then we have by Proposition \ref{thm_clt_sup}
    \begin{align*}
        \lambda_j(\hat\bfSigma) - \psi_n(\lambda_j(\bfSigma)) = \op, \quad j\in \{1,2\}.
    \end{align*}
    Also, since $\lambda_1(\bfSigma)> \lambda_2(\bfSigma)$ and by the fact that $\psi_n(\beta)$ is strictly increasing for $\beta > 1/ \xi_{n,K}$, we conclude from $\mathsf{H}_{1,n}(K)$ that 
    \begin{align} \label{ineq_psi_12}
    \psi_n(\lambda_1(\bfSigma)) > \psi_n(\lambda_2(\bfSigma)).
    \end{align}
   These arguments imply that $\hat\xi_n = - \tilde\su_n(\psi(\lambda_1(\bfSigma))) + \op.$ Now let $\su^0$ denote the Stieltjes transform of $\Fu^{y,H}$. Since $\psi(\lambda_1(\bfSigma)) \notin \operatorname{supp}(H)$, we also have $\tilde\su_n(\psi(\lambda_1(\bfSigma))) = \su^0 ( \psi(\lambda_1(\bfSigma))) +\op $ (see, e.g., \cite[Theorem 1.1]{baizhou2008}). 
    This completes the proof of \eqref{sigma_hat_bound}.

Let us now assume that $\mathsf{H}_{1,n}(K)$ holds with $K=1$. It follows from Lemma \ref{thm_alt_sub} that
\begin{align} \label{conv_tw_lambda2}
  \lambda_2(\bfSigmahat) - r_{n,2}   = o_{\PR}\lb n^{-1/2} \rb .
\end{align}
Moreover, we have
\begin{align}
    \label{bound_sigma1}
    0 < \inf_{n\in\N} \varsigma_{1,n}^2 \leq \sup_{n\in\N} \varsigma_{1,n}^2 < \infty, 
    \end{align}
    which can be obtained using  $\E(x_{11}^4)\geq (\E(x_{11}^2))^2=1, ~ \sum_{j=1}^p u_{1j}^4 \leq 1$ and $\psi'(\lambda_1(\bfSigma)) <1$. (Note that a formula for $\psi'$ was given in \eqref{eq_def_psi_deriv}.) Now define 
    \begin{align} \label{eq_Z_n}
        Z_n = \frac{ \sqrt{n} }{ \varsigma_{1,n} } \lb \lambda_1(\bfSigmahat) - \psi_n(\lambda_1(\bfSigma)) \rb 
    - \frac{ \sqrt{n} }{ \varsigma_{1,n} } \lb \lambda_2(\bfSigmahat) - r_{n,2} \rb.
    \end{align}
Combining \eqref{conv_tw_lambda2}, \eqref{bound_sigma1} and~\eqref{eqn:marginalspikelim}, we have 
\begin{align}  \label{conv_zn}
    Z_n 
    \cond \mathcal{N}(0,1).
\end{align}
By \eqref{eq_gap_psi_r}, we have $\psi_n(\lambda_1(\bfSigma)) -r_{n,2} \gtrsim 1$ for all large $n.$ 
Combining \eqref{sigma_hat_bound}, \eqref{conv_zn} and the representation
\begin{align*}
    T_n 
    = 
    \frac{n^{1/6} \varsigma_{1,n}}{\hat\sigma_n} Z_n + \frac{ n^{2/3} }{\hat\sigma_n} \lb \psi_n(\lambda_1(\bfSigma)) - r_{n,2} \rb ,
\end{align*}
the proof in the case when $K=1$ is complete. 

Next, we turn to the case when $K>1$.
By Proposition \ref{thm_clt_sup}, we know that as $n\to\infty$,
\begin{align} \label{eq_Dn_conv}
    D_{n,1} \cond \mathcal{N}(0,1).
\end{align}
Moreover, note that $H_{n,K+1} \cond H$ and $\lambda_2(\bfSigma) - \lambda_{K+1}(\bfSigma) \gtrsim 1$ under $\mathsf{H}_{1,n}(K)$. This gives
\begin{align} \label{eq_psi_n_conv}
  \lim_{n\to\infty}  \psi_n(\lambda_j(\bfSigma)) = \psi(\lambda_j(\bfSigma)), \quad j\in \{1,2\}.
\end{align}
Using \eqref{sigma_hat_bound}, \eqref{ineq_psi_12}, \eqref{eq_Dn_conv}, \eqref{eq_psi_n_conv}  and the formula
\begin{align*}
    T_n = \frac{n^{1/6} \gamma_{1,n}}{\hat\sigma_n} D_{n,1} + \frac{n^{2/3}}{\hat\sigma_n} \left\{  \psi_n(\lambda_1(\bfSigma)) - \psi_n(\lambda_2(\bfSigma)) \right\}, 
\end{align*}
it follows that $\PR ( T_n>q_{1-\alpha} ) = 1 +o(1)$ under $\mathsf{H}_{1,n}(K)$ with $K>1$, and the proof concludes. 
\end{proof}

\begin{lemma} \label{lem_magnitude_T_nk}
    If the conditions of Corollary \ref{thm_est_spike} hold with $t_n\asymp n^{\nu}$ for some fixed $\nu\in(0,2/3)$, then for any $1 \leq k \leq K$ we have
    \begin{align*}
       \lim_{n\to\infty} \PR \lb T_{n,k} < t_n \rb = 0.
    \end{align*}
\end{lemma}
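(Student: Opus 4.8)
The plan is to show that, for each $1\le k\le K$, the sample eigengap $\lambda_k(\bfSigmahat)-\lambda_{k+1}(\bfSigmahat)$ stays bounded away from $0$ with probability tending to $1$, and then to combine this with the bound $\hat\sigma_n=\mathcal{O}_{\PR}(1)$ already recorded in~\eqref{sigma_hat_bound}. Since $T_{n,k}=\frac{n^{2/3}}{\hat\sigma_n}\bigl(\lambda_k(\bfSigmahat)-\lambda_{k+1}(\bfSigmahat)\bigr)$, this will give $T_{n,k}\gtrsim n^{2/3}$ on an event of probability $1-o(1)$, and because $t_n\asymp n^{\nu}$ with $\nu<2/3$, the inequality $T_{n,k}\ge t_n$ will then hold with probability $1-o(1)$.

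For $1\le k<K$, I would use the marginal central limit theorem~\eqref{eqn:marginalspikelim} from Proposition~\ref{thm_clt_sup}, together with the boundedness of $\varsigma_{j,n}$ (as in~\eqref{bound_sigma1}), to obtain $\lambda_j(\bfSigmahat)-\psi_n(\lambda_j(\bfSigma))=o_{\PR}(1)$ for $j\in\{k,k+1\}$. It then suffices to bound $\psi_n(\lambda_k(\bfSigma))-\psi_n(\lambda_{k+1}(\bfSigma))$ below by a positive constant. Under $\mathsf{H}_{1,n}(K)$, and because $k\mapsto\xi_{n,k}$ is nondecreasing (which is immediate from~\eqref{eq_def_xi}), every $\lambda_j(\bfSigma)$ with $j\le K$ exceeds $1/\xi_{n,K}$, where $\psi_n$ is strictly increasing; since $\lambda_k(\bfSigma)>\lambda_{k+1}(\bfSigma)$ are fixed, the same convergence argument that gives~\eqref{eq_psi_n_conv} yields $\psi_n(\lambda_k(\bfSigma))-\psi_n(\lambda_{k+1}(\bfSigma))\to\psi(\lambda_k(\bfSigma))-\psi(\lambda_{k+1}(\bfSigma))>0$, where positivity follows from Proposition~\ref{prop_sub_cond}. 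Hence $\lambda_k(\bfSigmahat)-\lambda_{k+1}(\bfSigmahat)\gtrsim 1$ with probability $1-o(1)$. For the remaining case $k=K$, I would instead combine Lemma~\ref{thm_alt_sub} (which gives $\lambda_{K+1}(\bfSigmahat)-r_{n,K+1}=o_{\PR}(1)$), the marginal CLT~\eqref{eqn:marginalspikelim} (which gives $\lambda_K(\bfSigmahat)-\psi_n(\lambda_K(\bfSigma))=o_{\PR}(1)$), and the separation $\psi_n(\lambda_K(\bfSigma))-r_{n,K+1}\gtrsim 1$ established in~\eqref{eq_gap_psi_r}; this again forces $\lambda_K(\bfSigmahat)-\lambda_{K+1}(\bfSigmahat)\gtrsim 1$ with probability $1-o(1)$.

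To conclude, fix $c>0$ so that $\PR\bigl(\lambda_k(\bfSigmahat)-\lambda_{k+1}(\bfSigmahat)\ge c\bigr)\to 1$, and, using~\eqref{sigma_hat_bound}, fix $M>0$ with $\PR(\hat\sigma_n\le M)\to 1$. On the intersection of these two events one has $T_{n,k}\ge (c/M)\,n^{2/3}$, which exceeds $t_n$ for all large $n$ since $\nu<2/3$. Therefore
\[
\PR(T_{n,k}<t_n)\ \le\ \PR\bigl(\lambda_k(\bfSigmahat)-\lambda_{k+1}(\bfSigmahat)<c\bigr)+\PR(\hat\sigma_n>M)\ =\ o(1),
\]
as claimed. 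The only mildly delicate point is the uniform-in-$n$ positive lower bound on $\psi_n(\lambda_k(\bfSigma))-\psi_n(\lambda_{k+1}(\bfSigma))$ for $k<K$, but this is handled exactly as in the proof of Theorem~\ref{thm_alternative} via the locally uniform convergence $\psi_n\to\psi$; beyond that, the proof is essentially a bookkeeping assembly of Proposition~\ref{thm_clt_sup}, Lemma~\ref{thm_alt_sub}, \eqref{eq_gap_psi_r}, and~\eqref{sigma_hat_bound}, so no genuinely new obstacle arises.
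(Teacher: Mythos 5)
Your proposal is correct and essentially reproduces the paper's proof: both reduce the case $k=K$ to \eqref{eq_gap_psi_r}, the marginal CLT \eqref{eqn:marginalspikelim}, Lemma~\ref{thm_alt_sub}, and $\hat\sigma_n=\mathcal{O}_{\PR}(1)$, and both reduce $k<K$ to the deterministic separation $\psi_n(\lambda_k(\bfSigma))-\psi_n(\lambda_{k+1}(\bfSigma))\gtrsim 1$. The only cosmetic difference is that for $k<K$ the paper absorbs the $\mathcal{O}_{\PR}(n^{1/6})$ fluctuation into a single term via the difference CLT for $D_{n,k}$ normalized by $\gamma_{k,n}$ (Proposition~\ref{thm_clt_sup}\eqref{eqn:diffspikelim}), whereas you bound $\lambda_j(\bfSigmahat)-\psi_n(\lambda_j(\bfSigma))$ for $j\in\{k,k+1\}$ separately using \eqref{eqn:marginalspikelim} together with bounds on $\varsigma_{j,n}$ extending \eqref{bound_sigma1} to $j\le K$.
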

\begin{proof}[Proof of Lemma \ref{lem_magnitude_T_nk}]
    To begin with, consider the case $k=K$ and define
    \begin{align*}
        Z_{n,K} = \frac{ \sqrt{n} }{ \varsigma_{K,n} } \lb \lambda_K(\bfSigmahat) - \psi_n(\lambda_K(\bfSigma)) \rb 
    - \frac{ \sqrt{n} }{ \varsigma_{K,n} } \lb \lambda_{K+1}(\bfSigmahat) - r_{n,K+1} \rb.
    \end{align*}
    Using Proposition \ref{thm_clt_sup} and Lemma \ref{thm_alt_sub}, we obtain
    \begin{align} \label{eq_Z_n_K}
        Z_{n,K} \cond \mathcal{N}(0,1). 
    \end{align}
    By using the representation 
    \begin{align*}
        T_{n,K} = \frac{n^{1/6} \varsigma_{K,n}}{\hat\sigma_n} Z_{n,K} + \frac{n^{2/3}}{\hat\sigma_n} \lb \psi_n(\lambda_K(\bfSigma)) - r_{n,K+1}\rb , 
    \end{align*}
as well as \eqref{eq_gap_psi_r},     \eqref{sigma_hat_bound} and \eqref{eq_Z_n_K}, we obtain $n^{2/3}= \mathcal{O}_{\PR}(T_{n,K}).$
    Thus, the assertion of the lemma follows in the case $k=K.$

    Now consider the case when $k<K$. By Proposition \ref{thm_clt_sup}, we have
    \begin{align} \label{eq_D_n_k}
        D_{n,k} \cond \mathcal{N}(0,1).
    \end{align}
    Note that $T_{n,k}$ admits the representation 
    \begin{align*}
    T_{n,k} = \frac{n^{1/6} \gamma_{k,n}}{\hat\sigma_n} D_{n,k} + \frac{n^{2/3}}{\hat\sigma_n} \left\{  \psi_n(\lambda_k(\bfSigma)) - \psi_n(\lambda_{k+1}(\bfSigma)) \right\} .
\end{align*}
It also holds that $\psi_n(\lambda_k(\bfSigma)) - \psi_n(\lambda_{k'}(\bfSigma)) \gtrsim 1$ for $1 \leq k < k' \leq K$ (similarly to \eqref{ineq_psi_12}).
Combining these facts with \eqref{sigma_hat_bound} and \eqref{eq_D_n_k}, we obtain $ n^{2/3}= \mathcal{O}_{\PR}(T_{n,k})$ for $k<K.$ Thus, the proof concludes. 
\end{proof}

We conclude this section with the proof of Corollary \ref{thm_est_spike}.
\begin{proof}[Proof of Corollary \ref{thm_est_spike}]
    By Lemma \ref{lem_magnitude_T_nk}, we have
    \begin{align*}
        & \PR ( T_{n,k} < t_n) =  o(1) \quad \textnormal{ for } 1 \leq k \leq K.
    \end{align*}
 Next, the same argument used to show $\hat\sigma=\mathcal{O}_{\P}(1)$ in~\eqref{sigma_hat_bound} can be used to show $1=\mathcal{O}_{\P}(\hat\sigma)$. Combining this with Lemma \ref{thm_alt_sub} implies
 \begin{equation}
\PR ( T_{n,K+1} < t_n) = 1+o(1).
\end{equation}   
Altogether, the last two displays give
    \begin{align*}
        \PR \lb \hat K_n = K \rb 
        = \PR \lb \bigcap_{1\leq k \leq K} \{ T_{n,k} \geq t_n \} \cap \{ T_{n,K+1} < t_n \} \rb 
        = 1 + o(1). 
    \end{align*}
\end{proof}

\subsection{Proofs for Section \ref{sec_stat_appl}}
For the proof of Theorem \ref{thm_bootstrap_cont_map}, we need the following auxiliary result, which involves the quantity $\tilde\xi_{n,0}$ defined in~\eqref{eqn:tildexidef}.

\begin{lemma} \label{lem_subcritical}
    Assume that $\mathsf{H}_{0,n}$ holds for all large $n$ and \ref{ass_mp_regime}-\ref{ass_lsd} are satisfied.  
   Then, as $n\to\infty$
   \begin{equation}
       \tilde\xi_{n,0}/\xi_{n,0}=1+o_{\P}(1).
   \end{equation}
\end{lemma}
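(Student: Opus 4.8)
The plan is to reduce the claim to $\tilde\xi_{n,0}\xrightarrow{\P}\xi_0$, where $\xi_0=\lim_{n\to\infty}\xi_{n,0}\in(0,\infty)$ is the limit supplied by Lemma~\ref{lem:xilim}; this suffices because $\xi_{n,0}\asymp 1$ and $\xi_{n,0}\to\xi_0$, so $\tilde\xi_{n,0}-\xi_{n,0}\xrightarrow{\P}0$ would give $\tilde\xi_{n,0}/\xi_{n,0}\xrightarrow{\P}1$. Both $\xi_{n,0}$ and $\tilde\xi_{n,0}$ are the unique zeros of a strictly increasing function minus $y_n$, so I would carry out a monotonicity-plus-weak-convergence squeeze, in the spirit of the proof of Lemma~\ref{lem:xilim}. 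Write $h(t)=\big(t/(1-t)\big)^2$ and $\Phi_n(x)=\int h(\lambda x)\,d\tilde H_n(\lambda)$, which is continuous and strictly increasing on $(0,1/\tilde\lambda_1)$ with $\Phi_n(\tilde\xi_{n,0})=y_n$ by~\eqref{eqn:tildexidef}; write also $\Phi_\infty(x)=\int h(\lambda x)\,dH(\lambda)$, which is strictly increasing on $(0,1/u)$, where $u$ denotes the right endpoint of $\supp(H)$. The proof of Lemma~\ref{lem:xilim} shows $\Phi_\infty(\xi_0)=y$ with $y=\lim_{n\to\infty}y_n$, together with the crucial inequality $u\xi_0\le 1/(1+\varepsilon)<1$, so in particular $\xi_0\in(0,1/u)$.

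The only delicate point is the truncation $\tilde\lambda_j=\tilde\lambda_{j,Q}\wedge b_n$ with $b_n:=1/(\hat\xi_n(1+\varepsilon))$: since every $\tilde\lambda_j\le b_n$ and the integrand $h(\lambda x)$ blows up as $\lambda x\uparrow1$, one must check that the truncation level does not sink below $\supp(H)$ in the limit. By Proposition~\ref{thm_consistency_xi}\eqref{eqn:hatxithm} and Lemma~\ref{lem:xilim} one has $b_n\xrightarrow{\P}b_\infty:=1/(\xi_0(1+\varepsilon))$, and $u\xi_0\le 1/(1+\varepsilon)$ gives $b_\infty\ge u$. Hence, for any fixed $x\in(0,\xi_0(1+\varepsilon))$: with probability tending to $1$, $x$ lies in the domain of $\Phi_n$ and $\supp(\tilde H_n)$ is contained in a fixed compact interval $[0,M]$ with $b_\infty<M<1/x$, on which $\lambda\mapsto h(\lambda x)$ is bounded and Lipschitz; since also $\supp(H)\subseteq[0,u]\subseteq[0,M]$, replacing $h(\lambda x)$ off $[0,M]$ by a fixed bounded Lipschitz function and invoking $d_{\textup{BL}}(\tilde H_n,H)\xrightarrow{\P}0$ (established in the proof of Proposition~\ref{thm_consistency_xi}\eqref{eqn:hatHthm}) yields $\Phi_n(x)\xrightarrow{\P}\Phi_\infty(x)$.

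The squeeze is then routine. Fix $\eta\in(0,\varepsilon\xi_0)$, so that $\xi_0\pm\eta$ both lie in $(0,\xi_0(1+\varepsilon))$ and in $(0,1/u)$ (for $\xi_0+\eta$, because $\xi_0+\eta<\xi_0(1+\varepsilon)\le1/u$). Then $\Phi_\infty(\xi_0+\eta)>y>\Phi_\infty(\xi_0-\eta)$ by strict monotonicity, and $\Phi_n(\xi_0\pm\eta)\xrightarrow{\P}\Phi_\infty(\xi_0\pm\eta)$ by the previous paragraph. On $\{\tilde\xi_{n,0}\ge\xi_0+\eta\}$ one has $\xi_0+\eta\le\tilde\xi_{n,0}<1/\tilde\lambda_1$, hence $\Phi_n(\xi_0+\eta)\le\Phi_n(\tilde\xi_{n,0})=y_n$ by monotonicity; since $\Phi_n(\xi_0+\eta)\xrightarrow{\P}\Phi_\infty(\xi_0+\eta)>y$, this forces $\PR(\tilde\xi_{n,0}\ge\xi_0+\eta)\to0$. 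Symmetrically, $\PR(\tilde\xi_{n,0}\le\xi_0-\eta)\to0$. Letting $\eta\downarrow0$ gives $\tilde\xi_{n,0}\xrightarrow{\P}\xi_0$, and the reduction in the first paragraph completes the proof.

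I expect the main obstacle to be precisely the bookkeeping of the second paragraph — verifying that the data-driven truncation level stays above the support of the limiting spectral distribution (which is exactly what $u\xi_0\le1/(1+\varepsilon)$ buys) so that the singular integrand $h(\lambda x)$ can be passed to the limit by ordinary weak convergence. Once that is in hand, the rest mirrors the monotonicity arguments already used for $\xi_{n,0}$ in Lemma~\ref{lem:xilim} and for $\hat\sigma_n$ in Proposition~\ref{thm_consistency_xi}\eqref{eqn:hatsigmathm}.
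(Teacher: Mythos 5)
Your proof is correct, but it takes a genuinely different route from the paper's. You reduce the claim to showing $\tilde\xi_{n,0}\xrightarrow{\P}\xi_0$ via Lemma~\ref{lem:xilim}, and then establish this by a soft monotonicity-and-squeeze argument: you show $\Phi_n(x)\xrightarrow{\P}\Phi_\infty(x)$ pointwise at two fixed test points $\xi_0\pm\eta$ straddling $\xi_0$, and exploit the strict monotonicity of $\Phi_\infty$ together with $\Phi_n(\tilde\xi_{n,0})=1/y_n$. The paper instead works directly with the difference $\tilde\xi_{n,0}/\xi_{n,0}-1$ (never passing through the limit $\xi_0$): it first proves $I(\xi_{n,0},\tilde H_n)-I(\tilde\xi_{n,0},\tilde H_n)=o_{\P}(1)$, which is the analogue of your pointwise step, and then closes the argument with a quantitative coercivity estimate,
\[
\Big(\tfrac{\xi_{n,0}}{\tilde\xi_{n,0}}-1\Big)^2 \;\lesssim\; \Big|I(\tilde\xi_{n,0},\tilde H_n)-I(\xi_{n,0},\tilde H_n)\Big|,
\]
obtained by factoring the integrand and lower-bounding it on the truncation event. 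The paper's method therefore yields a rate-type inequality and avoids invoking the existence of the limit $\xi_0$; your method avoids the coercivity computation altogether by leaning on Lemma~\ref{lem:xilim} and uses only the qualitative ingredients (weak convergence of $\tilde H_n$, strict monotonicity of the defining equation), which arguably makes the mechanism more transparent. Both achieve the same conclusion, and your handling of the key technical point—that the data-driven truncation level $b_n=1/(\hat\xi_n(1+\varepsilon))$ converges to $b_\infty\ge u$, keeping the singular integrand $h(\lambda x)$ uniformly controlled and Lipschitz on a fixed compact set containing $\supp(\tilde H_n)$ and $\supp(H)$—matches the paper's use of the truncation event $\Omega_n$. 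One cosmetic slip: by~\eqref{eqn:tildexidef} and~\eqref{eq_def_xi} the defining value is $n/p=1/y_n$, not $y_n$, so the equalities should read $\Phi_n(\tilde\xi_{n,0})=1/y_n$ and $\Phi_\infty(\xi_0)=1/y$ (the paper's equation~\eqref{eqn:limiteqn} has the same typo); this has no effect on the monotonicity argument since only consistency of the normalizing constant across the equations is used.
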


\begin{proof}[Proof of Lemma \ref{lem_subcritical}]
To ease notation, if $G$ is compactly supported probability measure on $[0,\infty)$ and $c>0$ denotes the rightmost endpoint of its support, then for any $t\in (0,1/c)$, we define
  \begin{align*}
 	I (t,G) = \int \lb \frac{\lambda t}{1 - \lambda t} \rb^2 d G(\lambda).
 \end{align*}
 As an initial step, we show that 
 \begin{equation}\label{eqn:equivIs}
             I( \xi_{n,0}, \tilde H_n) - I (\tilde\xi_{n,0}, \tilde H_n ) = o_{\P}(1).
 \end{equation}
Since $I(\tilde{\xi}_{n,0},\tilde H_n)$ and $I(\xi_{n,0},H_n)$ are both equal to $1/y_n$, it is enough to show that $I(\xi_{n,0},\tilde H_n)-I(\xi_{n,0},H_n)=o_{\P}(1)$. Let $\Omega_n=\{\tilde\lambda_1\xi_{n,0}\leq 1/(1+\varepsilon/2)\}$ and note that $\P(\Omega_n)=1-o(1)$ because of Proposition~\ref{thm_consistency_xi}(a) and because $\mathsf{H}_{n,0}$ holds for all large $n$. It follows that
\begin{equation}
\begin{split}
     \big|I( \xi_{n,0}, \tilde H_n) - I (\xi_{n,0}, H_n )\big| & \ \leq \ \frac{1\{\Omega_n\}}{p}\sum_{j=1}^p \bigg|\Big(\ts\frac{\tilde\lambda_j \xi_{n,0}}{1-\tilde\lambda_j \xi_{n,0}}\Big)^2-\Big(\frac{\lambda_j(\bfSigma)\xi_{n,0}}{1-\lambda_j(\bfSigma)\xi_{n,0}}\Big)^2\bigg| \ + \ o_{\P}(1)\\
     &\lesssim \ \frac{\xi_{n,0}}{p}\sum_{j=1}^p |\tilde\lambda_j-\lambda_j(\bfSigma)|\ + \ o_{\P}(1)\\
     &= \ o_{\P}(1)
 \end{split}
\end{equation}
where the second step uses the fact that the function $x\mapsto (x/(1-x))^2$ is $\mathcal{O}(1)$-Lipschitz on $[0,1/(1+\varepsilon/2)]$, and the third step uses Proposition~\ref{thm_consistency_xi}(b) and the fact that there is a fixed compact interval containing the supports of $\tilde H_n$ and $H_n$ with probability $1-o(1)$~\citep[Corollary A.1]{ledoit2015spectrum}. This establishes~\eqref{eqn:equivIs}.

To conclude the proof, it suffices to establish the almost-sure bound
\begin{equation}\label{eqn:sufficeslast}
    \Big(\frac{\xi_{n,0}}{\tilde \xi_{n,0}}-1\Big)^2 \ \lesssim \   \Big|I(\tilde\xi_{n,0},\tilde H_n)-I(\xi_{n,0},\tilde H_n)\Big|.
\end{equation}
In the following calculation, we may assume without loss of generality that $\tilde\xi_{n,0}\geq \xi_{n,0}$, for otherwise the roles of these quantities may be interchanged. Observe that
\begin{equation*}
    \begin{split}
        \Big|I(\tilde\xi_{n,0},\tilde H_n)-I(\xi_{n,0},\tilde H_n)\Big| &  \ = \ 
        \bigg| \int \frac{\lambda^2 ( \tilde\xi_{n,0} - \xi_{n,0} ) ( \tilde\xi_{n,0} + \xi_{n,0} - 2 \lambda \tilde\xi_{n,0} \xi_{n,0})  }{(1 - \lambda \tilde\xi_{n,0})^2 \lb 1 - \lambda \xi_{n,0} \rb^2}
        d \tilde H_n(\lambda) \bigg|\\[0.2cm]
        & \  \geq \ 1\{\Omega_n\}\cdot  \int 
        \frac{\lambda^2 ( \tilde\xi_{n,0} - \xi_{n,0} )^2 }{( 1 - \lambda \tilde\xi_{n,0})^2 \lb 1 - \lambda \xi_{n,0} \rb^2}
        d \tilde H_n(\lambda) \\[0.2cm]
        & \ \gtrsim \   I(\tilde\xi_{n,0}, \tilde H_n)\frac{(\tilde\xi_{n,0} - \xi_{n,0} )^2 }{\tilde\xi_{n,0}^2 }\\[0.2cm]
        & \ = \ (1/y_n)   \Big(\frac{\xi_{n,0}}{\tilde \xi_{n,0}}-1\Big)^2
    \end{split}
\end{equation*}
where the second step uses $\tilde\xi_{n,0}\tilde\lambda_1\leq 1$, and the third step uses the definition of the event $\Omega_n$. This establishes~\eqref{eqn:sufficeslast} and completes the proof.
\end{proof}

\begin{proof}[Proof of Theorem \ref{thm_bootstrap_cont_map}]
Let the function $g$ be as in the definition of $V_n$ and $V_n^{\star}$, and let $(\zeta_1,\dots,\zeta_k)$ have a $k$-dimensional Tracy-Widom distribution. It is known from~\cite[Corollary 3.19]{knowles2017anisotropic} that the limit 
\begin{equation}
    \lim_{n\to\infty}d(\mathcal{L}(V_n),\mathcal{L}(g(\zeta_1,\dots,\zeta_k))=0
\end{equation}
is implied by the conditions \eqref{def_regular_edge}, \ref{ass_mp_regime}, \ref{ass_mom}, and the following weaker version of \ref{ass_lsd}:
    \begin{enumerate}[label={(A3')}, ref=A3']
        \item \label{label:A3prime} As $n\to\infty$, the distribution $H_n$ has a weak limit $H$, and the support of $H$ is a \smash{finite} union of compact intervals in $(0,\infty)$.
    \end{enumerate}
If we can show that the ``bootstrap-world'' counterparts of these conditions hold almost surely along subsequences, then it will follow that
\begin{equation}
    d(\mathcal{L}(V_n^*|\mathbf{Y}),\mathcal{L}(g(\zeta_1,\dots,\zeta_k))\xrightarrow{\P}0
\end{equation}    
as $n\to\infty$, and the proof will be complete.

It is clear that~\ref{ass_mp_regime} and \ref{ass_mom} always hold in the bootstrap world. With regard to (A3'), Proposition \ref{thm_consistency_xi}\eqref{eqn:hatHthm} ensures that for any subsequence of $\N$, there is a further subsequence along which the random distribution $\tilde H_n$ converges weakly almost surely to $H$, whose support is a finite union of compact intervals. 

With regard to \eqref{def_regular_edge}, let $[\tilde l_n,\tilde r_n]$ denote the rightmost interval in $\textup{supp}(\underline{F}^{y_n,\tilde H_n})$ (which consists of a finite union of compact intervals by \citep[Lemma 2.6]{knowles2017anisotropic}). It is enough to verify that there is a fixed constant $\eta>0$ such that for any subsequence of $\N$, there is a further subsequence along which the inequalities
\begin{align}
    \tilde r_n - \tilde l_n &\geq \eta,\label{def_regular_edge_boot1}\\
    \tilde \lambda_1(\tilde \xi_{n,0}+\eta)&\leq 1\label{def_regular_edge_boot2}
\end{align}
hold almost surely for all large $n$. By Lemma \ref{lem_subcritical}, Proposition~\ref{thm_consistency_xi}, and Proposition~\ref{thm_tw_law}, we know that $\tilde\xi_{n,0}/\hat\xi_n=1+o_{\P}(1)$, and since the inequality $\tilde\lambda_1\hat\xi_n\leq 1/(1+\varepsilon)$ always holds, it follows that there is a choice of $\eta>0$ such that the inequality~\eqref{def_regular_edge_boot2} holds almost surely along subsequences. Finally, since we know that~\eqref{def_regular_edge_boot2} and the bootstrap-world counterparts of \ref{ass_mp_regime} and \eqref{label:A3prime} hold almost surely along subsequences, the argument used to establish \cite[(2.19)]{bao2013local} can be applied to show that~\eqref{def_regular_edge_boot1} also holds almost surely along subsequences.
\end{proof}

\bibliographystyle{chicago}

\bibliography{references, NSF2023bib}
\end{document}